\newtheorem{theorem}{Theorem}
\theoremstyle{plain}
\newtheorem{axiom}{Axiom}
\newtheorem{case}{Case}
\newtheorem{conjecture}{Conjecture}
\newtheorem{corollary}{Corollary}
\newtheorem{definition}{Definition}
\newtheorem{example}{Example}
\newtheorem{exercise}{Exercise}
\newtheorem{lemma}{Lemma}
\newtheorem{proposition}{Proposition}
\newtheorem{remark}{Remark}
\numberwithin{equation}{section}
\numberwithin{theorem}{section}
\numberwithin{algorithm}{section}
\numberwithin{axiom}{section}
\numberwithin{case}{section}
\numberwithin{claim}{section}
\numberwithin{conclusion}{section}
\numberwithin{condition}{section}
\numberwithin{conjecture}{section}
\numberwithin{corollary}{section}
\numberwithin{criterion}{section}
\numberwithin{definition}{section}
\numberwithin{example}{section}
\numberwithin{exercise}{section}
\numberwithin{lemma}{section}
\numberwithin{notation}{section}
\numberwithin{problem}{section}
\numberwithin{proposition}{section}
\numberwithin{remark}{section}
\numberwithin{solution}{section}
\chardef\@x10\chardef\@xv60
\def\tcitime{
\def\@time{%
  \@minute\time\@hour\@minute\divide\@hour\@xv
  \ifnum\@hour<\@x 0\fi\the\@hour:%
  \multiply\@hour\@xv\advance\@minute-\@hour
  \ifnum\@minute<\@x 0\fi\the\@minute
  }}%
\def\QCTOpt[#1]#2{%
  \def\QCTOptB{#1}
  \def\QCTOptA{#2}
}
\def\QCTNOpt#1{%
  \def\QCTOptA{#1}
  \let\QCTOptB\empty
}
\def\Qct{%
  \@ifnextchar[{%
    \QCTOpt}{\QCTNOpt}
}
\def\QCBOpt[#1]#2{%
  \def\QCBOptB{#1}
  \def\QCBOptA{#2}
}
\def\QCBNOpt#1{%
  \def\QCBOptA{#1}
  \let\QCBOptB\empty
}
\def\Qcb{%
  \@ifnextchar[{%
    \QCBOpt}{\QCBNOpt}
}
\def\PrepCapArgs{%
  \ifx\QCBOptA\empty
    \ifx\QCTOptA\empty
      {}%
    \else
      \ifx\QCTOptB\empty
        {\QCTOptA}%
      \else
        [\QCTOptB]{\QCTOptA}%
      \fi
    \fi
  \else
    \ifx\QCBOptA\empty
      {}%
    \else
      \ifx\QCBOptB\empty
        {\QCBOptA}%
      \else
        [\QCBOptB]{\QCBOptA}%
      \fi
    \fi
  \fi
}
\def\GRAPHICSPS#1{%
 \ifcase\GRAPHICSTYPE
   \special{ps: #1}%
 \or
   \special{language "PS", include "#1"}%
 \fi
}%
\def\graffile#1#2#3#4{%
    \bgroup
    \leavevmode
    \@ifundefined{bbl@deactivate}{\def~{\string~}}{\activesoff}
    \raise -#4 \BOXTHEFRAME{%
        \hbox to #2{\raise #3\hbox to #2{\null #1\hfil}}}%
    \egroup
}%
\def\draftbox#1#2#3#4{%
 \leavevmode\raise -#4 \hbox{%
  \frame{\rlap{\protect\tiny #1}\hbox to #2%
   {\vrule height#3 width\z@ depth\z@\hfil}%
  }%
 }%
}%
\newif\ifwasdraft
\def\GRAPHIC#1#2#3#4#5{%
 \ifnum\draft=\@ne\draftbox{#2}{#3}{#4}{#5}%
  \else\graffile{#1}{#3}{#4}{#5}%
  \fi
 }%
\def\addtoLaTeXparams#1{%
    \edef\LaTeXparams{\LaTeXparams #1}}%
\newif\ifBoxFrame \BoxFramefalse
\newif\ifOverFrame \OverFramefalse
\newif\ifUnderFrame \UnderFramefalse
\def\BOXTHEFRAME#1{%
   \hbox{%
      \ifBoxFrame
         \frame{#1}%
      \else
         {#1}%
      \fi
   }%
}
\def\doFRAMEparams#1{\BoxFramefalse\OverFramefalse\UnderFramefalse\readFRAMEparams#1\end}%
\def\readFRAMEparams#1{%
 \ifx#1\end%
  \let\next=\relax
  \else
  \ifx#1i\dispkind=\z@\fi
  \ifx#1d\dispkind=\@ne\fi
  \ifx#1f\dispkind=\tw@\fi
  \ifx#1t\addtoLaTeXparams{t}\fi
  \ifx#1b\addtoLaTeXparams{b}\fi
  \ifx#1p\addtoLaTeXparams{p}\fi
  \ifx#1h\addtoLaTeXparams{h}\fi
  \ifx#1X\BoxFrametrue\fi
  \ifx#1O\OverFrametrue\fi
  \ifx#1U\UnderFrametrue\fi
  \ifx#1w
    \ifnum\draft=1\wasdrafttrue\else\wasdraftfalse\fi
    \draft=\@ne
  \fi
  \let\next=\readFRAMEparams
  \fi
 \next
 }%
\def\IFRAME#1#2#3#4#5#6{%
      \bgroup
      \let\QCTOptA\empty
      \let\QCTOptB\empty
      \let\QCBOptA\empty
      \let\QCBOptB\empty
      #6%
      \parindent=0pt%
      \leftskip=0pt
      \rightskip=0pt
      \setbox0 = \hbox{\QCBOptA}%
      \@tempdima = #1\relax
      \ifOverFrame
          \typeout{This is not implemented yet}%
          \show\HELP
      \else
         \ifdim\wd0>\@tempdima
            \advance\@tempdima by \@tempdima
            \ifdim\wd0 >\@tempdima
               \textwidth=\@tempdima
               \setbox1 =\vbox{%
                  \noindent\hbox to \@tempdima{\hfill\GRAPHIC{#5}{#4}{#1}{#2}{#3}\hfill}\\%
                  \noindent\hbox to \@tempdima{\parbox[b]{\@tempdima}{\QCBOptA}}%
               }%
               \wd1=\@tempdima
            \else
               \textwidth=\wd0
               \setbox1 =\vbox{%
                 \noindent\hbox to \wd0{\hfill\GRAPHIC{#5}{#4}{#1}{#2}{#3}\hfill}\\%
                 \noindent\hbox{\QCBOptA}%
               }%
               \wd1=\wd0
            \fi
         \else
            \ifdim\wd0>0pt
              \hsize=\@tempdima
              \setbox1 =\vbox{%
                \unskip\GRAPHIC{#5}{#4}{#1}{#2}{0pt}%
                \break
                \unskip\hbox to \@tempdima{\hfill \QCBOptA\hfill}%
              }%
              \wd1=\@tempdima
           \else
              \hsize=\@tempdima
              \setbox1 =\vbox{%
                \unskip\GRAPHIC{#5}{#4}{#1}{#2}{0pt}%
              }%
              \wd1=\@tempdima
           \fi
         \fi
         \@tempdimb=\ht1
         \advance\@tempdimb by \dp1
         \advance\@tempdimb by -#2%
         \advance\@tempdimb by #3%
         \leavevmode
         \raise -\@tempdimb \hbox{\box1}%
      \fi
      \egroup%
}%
\def\DFRAME#1#2#3#4#5{%
 \begin{center}
     \let\QCTOptA\empty
     \let\QCTOptB\empty
     \let\QCBOptA\empty
     \let\QCBOptB\empty
     \ifOverFrame 
        #5\QCTOptA\par
     \fi
     \GRAPHIC{#4}{#3}{#1}{#2}{\z@}
     \ifUnderFrame 
        \nobreak\par\nobreak#5\QCBOptA
     \fi
 \end{center}%
 }%
\def\FFRAME#1#2#3#4#5#6#7{%
 \begin{figure}[#1]%
  \let\QCTOptA\empty
  \let\QCTOptB\empty
  \let\QCBOptA\empty
  \let\QCBOptB\empty
  \ifOverFrame
    #4
    \ifx\QCTOptA\empty
    \else
      \ifx\QCTOptB\empty
        \caption{\QCTOptA}%
      \else
        \caption[\QCTOptB]{\QCTOptA}%
      \fi
    \fi
    \ifUnderFrame\else
      \label{#5}%
    \fi
  \else
    \UnderFrametrue%
  \fi
  \begin{center}\GRAPHIC{#7}{#6}{#2}{#3}{\z@}\end{center}%
  \ifUnderFrame
    #4
    \ifx\QCBOptA\empty
      \caption{}%
    \else
      \ifx\QCBOptB\empty
        \caption{\QCBOptA}%
      \else
        \caption[\QCBOptB]{\QCBOptA}%
      \fi
    \fi
    \label{#5}%
  \fi
  \end{figure}%
 }%
\def\makeactives{
  \catcode`\"=\active
  \catcode`\;=\active
  \catcode`\:=\active
  \catcode`\'=\active
  \catcode`\~=\active
}
   \gdef\activesoff{%
      \def"{\string"}
      \def;{\string;}
      \def:{\string:}
      \def'{\string'}
      \def~{\string~}
    }
\def\FRAME#1#2#3#4#5#6#7#8{%
 \bgroup
 \ifnum\draft=\@ne
   \wasdrafttrue
 \else
   \wasdraftfalse%
 \fi
 \def\LaTeXparams{}%
 \dispkind=\z@
 \def\LaTeXparams{}%
 \doFRAMEparams{#1}%
 \ifnum\dispkind=\z@\IFRAME{#2}{#3}{#4}{#7}{#8}{#5}\else
  \ifnum\dispkind=\@ne\DFRAME{#2}{#3}{#7}{#8}{#5}\else
   \ifnum\dispkind=\tw@
    \edef\@tempa{\noexpand\FFRAME{\LaTeXparams}}%
    \@tempa{#2}{#3}{#5}{#6}{#7}{#8}%
    \fi
   \fi
  \fi
  \ifwasdraft\draft=1\else\draft=0\fi{}%
  \egroup
 }%
\def\TEXUX#1{"texux"}
\def\func#1{\mathop{\rm #1}\nolimits}%
\long\def\QQQ#1#2{%
     \long\expandafter\def\csname#1\endcsname{#2}}%
\long\def\QQA#1#2{}%
\def\QTR#1#2{{\csname#1\endcsname #2}}
\def\EXPAND#1[#2]#3{}%
\def\NOEXPAND#1[#2]#3{}%
\def\LaTeXparent#1{}%
\def\ChildStyles#1{}%
\def\ChildDefaults#1{}%
\def\QTagDef#1#2#3{}%
  \providecommand{\UNICODE}[2][]{}
\def\QQfnmark#1{\footnotemark}
 \def\abstract{%
  \if@twocolumn
   \section*{Abstract (Not appropriate in this style!)}%
   \else \small 
   \begin{center}{\bf Abstract\vspace{-.5em}\vspace{\z@}}\end{center}%
   \quotation 
   \fi
  }%
   \def\registered{\relax\ifmmode{}\r@gistered
                    \else$\m@th\r@gistered$\fi}%
 \def\r@gistered{^{\ooalign
  {\hfil\raise.07ex\hbox{$\scriptstyle\rm\text{R}$}\hfil\crcr
  \mathhexbox20D}}}}{}%
\newdimen\theight
\def\Column{%
 \vadjust{\setbox\z@=\hbox{\scriptsize\quad\quad tcol}%
  \theight=\ht\z@\advance\theight by \dp\z@\advance\theight by \lineskip
  \kern -\theight \vbox to \theight{%
   \rightline{\rlap{\box\z@}}%
   \vss
   }%
  }%
 }%
\def\qed{%
 \ifhmode\unskip\nobreak\fi\ifmmode\ifinner\else\hskip5\p@\fi\fi
 \hbox{\hskip5\p@\vrule width4\p@ height6\p@ depth1.5\p@\hskip\p@}%
 }%
\def\miss{\hbox{\vrule height2\p@ width 2\p@ depth\z@}}%
\def\tcol#1{{\baselineskip=6\p@ \vcenter{#1}} \Column}  %
\def\newfmtname{LaTeX2e}
  \DeclareOldFontCommand{\rm}{\normalfont\rmfamily}{\mathrm}
  \DeclareOldFontCommand{\sf}{\normalfont\sffamily}{\mathsf}
  \DeclareOldFontCommand{\tt}{\normalfont\ttfamily}{\mathtt}
  \DeclareOldFontCommand{\bf}{\normalfont\bfseries}{\mathbf}
  \DeclareOldFontCommand{\it}{\normalfont\itshape}{\mathit}
  \DeclareOldFontCommand{\sl}{\normalfont\slshape}{\@nomath\sl}
  \DeclareOldFontCommand{\sc}{\normalfont\scshape}{\@nomath\sc}
\def\alpha{{\Greekmath 010B}}%
\def\beta{{\Greekmath 010C}}%
\def\gamma{{\Greekmath 010D}}%
\def\delta{{\Greekmath 010E}}%
\def\epsilon{{\Greekmath 010F}}%
\def\zeta{{\Greekmath 0110}}%
\def\eta{{\Greekmath 0111}}%
\def\theta{{\Greekmath 0112}}%
\def\iota{{\Greekmath 0113}}%
\def\kappa{{\Greekmath 0114}}%
\def\lambda{{\Greekmath 0115}}%
\def\mu{{\Greekmath 0116}}%
\def\nu{{\Greekmath 0117}}%
\def\xi{{\Greekmath 0118}}%
\def\pi{{\Greekmath 0119}}%
\def\rho{{\Greekmath 011A}}%
\def\sigma{{\Greekmath 011B}}%
\def\tau{{\Greekmath 011C}}%
\def\upsilon{{\Greekmath 011D}}%
\def\phi{{\Greekmath 011E}}%
\def\chi{{\Greekmath 011F}}%
\def\psi{{\Greekmath 0120}}%
\def\omega{{\Greekmath 0121}}%
\def\varepsilon{{\Greekmath 0122}}%
\def\vartheta{{\Greekmath 0123}}%
\def\varpi{{\Greekmath 0124}}%
\def\varrho{{\Greekmath 0125}}%
\def\varsigma{{\Greekmath 0126}}%
\def\varphi{{\Greekmath 0127}}%
\def\nabla{{\Greekmath 0272}}
\def\FindBoldGroup{%
   {\setbox0=\hbox{$\mathbf{x\global\edef\theboldgroup{\the\mathgroup}}$}}%
}
\def\Greekmath#1#2#3#4{%
    \if@compatibility
        \ifnum\mathgroup=\symbold
           \mathchoice{\mbox{\boldmath$\displaystyle\mathchar"#1#2#3#4$}}%
                      {\mbox{\boldmath$\textstyle\mathchar"#1#2#3#4$}}%
                      {\mbox{\boldmath$\scriptstyle\mathchar"#1#2#3#4$}}%
                      {\mbox{\boldmath$\scriptscriptstyle\mathchar"#1#2#3#4$}}%
        \else
           \mathchar"#1#2#3#4%
        \fi 
    \else 
        \FindBoldGroup
        \ifnum\mathgroup=\theboldgroup 
           \mathchoice{\mbox{\boldmath$\displaystyle\mathchar"#1#2#3#4$}}%
                      {\mbox{\boldmath$\textstyle\mathchar"#1#2#3#4$}}%
                      {\mbox{\boldmath$\scriptstyle\mathchar"#1#2#3#4$}}%
                      {\mbox{\boldmath$\scriptscriptstyle\mathchar"#1#2#3#4$}}%
        \else
           \mathchar"#1#2#3#4%
        \fi     	    
	  \fi}
\newif\ifGreekBold  \GreekBoldfalse
\let\SAVEPBF=\pbf
\def\pbf{\GreekBoldtrue\SAVEPBF}%
  \newcounter{equationnumber}  
  \def\mathletters{%
     \addtocounter{equation}{1}
     \edef\@currentlabel{\theequation}%
     \setcounter{equationnumber}{\c@equation}
     \setcounter{equation}{0}%
     \edef\theequation{\@currentlabel\noexpand\alph{equation}}%
  }
    \def\BibTeX{{\rm B\kern-.05em{\sc i\kern-.025em b}\kern-.08em
                 T\kern-.1667em\lower.7ex\hbox{E}\kern-.125emX}}}{}%
\def\AmS{{\protect\usefont{OMS}{cmsy}{m}{n}%
                A\kern-.1667em\lower.5ex\hbox{M}\kern-.125emS}}}{}%
\def\@@eqncr{\let\@tempa\relax
    \ifcase\@eqcnt \def\@tempa{& & &}\or \def\@tempa{& &}%
      \else \def\@tempa{&}\fi
     \@tempa
     \if@eqnsw
        \iftag@
           \@taggnum
        \else
           \@eqnnum\stepcounter{equation}%
        \fi
     \fi
     \global\tag@false
     \global\@eqnswtrue
     \global\@eqcnt\z@\cr}
\def\TCItag{\@ifnextchar*{\@TCItagstar}{\@TCItag}}
\def\@TCItag#1{%
    \global\tag@true
    \global\def\@taggnum{(#1)}}
\def\@TCItagstar*#1{%
    \global\tag@true
    \global\def\@taggnum{#1}}
\let\DOTSI\relax
\def\RIfM@{\relax\ifmmode}%
\def\FN@{\futurelet\next}%
\def\iint{\DOTSI\intno@\tw@\FN@\ints@}%
\def\iiint{\DOTSI\intno@\thr@@\FN@\ints@}%
\def\iiiint{\DOTSI\intno@4 \FN@\ints@}%
\def\idotsint{\DOTSI\intno@\z@\FN@\ints@}%
\def\ints@{\findlimits@\ints@@}%
\newif\iflimtoken@
\newif\iflimits@
\def\findlimits@{\limtoken@true\ifx\next\limits\limits@true
 \else\ifx\next\nolimits\limits@false\else
 \limtoken@false\ifx\ilimits@\nolimits\limits@false\else
 \ifinner\limits@false\else\limits@true\fi\fi\fi\fi}%
\def\multint@{\int\ifnum\intno@=\z@\intdots@                          
 \else\intkern@\fi                                                    
 \ifnum\intno@>\tw@\int\intkern@\fi                                   
 \ifnum\intno@>\thr@@\int\intkern@\fi                                 
 \int}
\def\multintlimits@{\intop\ifnum\intno@=\z@\intdots@\else\intkern@\fi
 \ifnum\intno@>\tw@\intop\intkern@\fi
 \ifnum\intno@>\thr@@\intop\intkern@\fi\intop}%
\def\intic@{%
    \mathchoice{\hskip.5em}{\hskip.4em}{\hskip.4em}{\hskip.4em}}%
\def\negintic@{\mathchoice
 {\hskip-.5em}{\hskip-.4em}{\hskip-.4em}{\hskip-.4em}}%
\def\ints@@{\iflimtoken@                                              
 \def\ints@@@{\iflimits@\negintic@
   \mathop{\intic@\multintlimits@}\limits                             
  \else\multint@\nolimits\fi                                          
  \eat@}
 \else                                                                
 \def\ints@@@{\iflimits@\negintic@
  \mathop{\intic@\multintlimits@}\limits\else
  \multint@\nolimits\fi}\fi\ints@@@}%
\def\intkern@{\mathchoice{\!\!\!}{\!\!}{\!\!}{\!\!}}%
\def\plaincdots@{\mathinner{\cdotp\cdotp\cdotp}}%
\def\intdots@{\mathchoice{\plaincdots@}%
 {{\cdotp}\mkern1.5mu{\cdotp}\mkern1.5mu{\cdotp}}%
 {{\cdotp}\mkern1mu{\cdotp}\mkern1mu{\cdotp}}%
 {{\cdotp}\mkern1mu{\cdotp}\mkern1mu{\cdotp}}}%
\def\RIfM@{\relax\protect\ifmmode}
\def\text{\RIfM@\expandafter\text@\else\expandafter\mbox\fi}
\let\nfss@text\text
\def\text@#1{\mathchoice
   {\textdef@\displaystyle\f@size{#1}}%
   {\textdef@\textstyle\tf@size{\firstchoice@false #1}}%
   {\textdef@\textstyle\sf@size{\firstchoice@false #1}}%
   {\textdef@\textstyle \ssf@size{\firstchoice@false #1}}%
   \glb@settings}
\def\textdef@#1#2#3{\hbox{{%
                    \everymath{#1}%
                    \let\f@size#2\selectfont
                    #3}}}
\newif\iffirstchoice@
\def\Let@{\relax\iffalse{\fi\let\\=\cr\iffalse}\fi}%
\def\vspace@{\def\vspace##1{\crcr\noalign{\vskip##1\relax}}}%
\def\multilimits@{\bgroup\vspace@\Let@
 \baselineskip\fontdimen10 \scriptfont\tw@
 \advance\baselineskip\fontdimen12 \scriptfont\tw@
 \lineskip\thr@@\fontdimen8 \scriptfont\thr@@
 \lineskiplimit\lineskip
 \vbox\bgroup\ialign\bgroup\hfil$\m@th\scriptstyle{##}$\hfil\crcr}%
\def\Sb{_\multilimits@}%
\def\endSb{\crcr\egroup\egroup\egroup}%
\def\Sp{^\multilimits@}%
\newdimen\ex@
\def\rightarrowfill@#1{$#1\m@th\mathord-\mkern-6mu\cleaders
 \hbox{$#1\mkern-2mu\mathord-\mkern-2mu$}\hfill
 \mkern-6mu\mathord\rightarrow$}%
\def\leftarrowfill@#1{$#1\m@th\mathord\leftarrow\mkern-6mu\cleaders
 \hbox{$#1\mkern-2mu\mathord-\mkern-2mu$}\hfill\mkern-6mu\mathord-$}%
\def\leftrightarrowfill@#1{$#1\m@th\mathord\leftarrow
\mkern-6mu\cleaders
 \hbox{$#1\mkern-2mu\mathord-\mkern-2mu$}\hfill
 \mkern-6mu\mathord\rightarrow$}%
\def\overrightarrow{\mathpalette\overrightarrow@}%
\def\overrightarrow@#1#2{\vbox{\ialign{##\crcr\rightarrowfill@#1\crcr
 \noalign{\kern-\ex@\nointerlineskip}$\m@th\hfil#1#2\hfil$\crcr}}}%
\def\overleftarrow{\mathpalette\overleftarrow@}%
\def\overleftarrow@#1#2{\vbox{\ialign{##\crcr\leftarrowfill@#1\crcr
 \noalign{\kern-\ex@\nointerlineskip}$\m@th\hfil#1#2\hfil$\crcr}}}%
\def\overleftrightarrow{\mathpalette\overleftrightarrow@}%
\def\overleftrightarrow@#1#2{\vbox{\ialign{##\crcr
   \leftrightarrowfill@#1\crcr
 \noalign{\kern-\ex@\nointerlineskip}$\m@th\hfil#1#2\hfil$\crcr}}}%
\def\underrightarrow{\mathpalette\underrightarrow@}%
\def\underrightarrow@#1#2{\vtop{\ialign{##\crcr$\m@th\hfil#1#2\hfil
  $\crcr\noalign{\nointerlineskip}\rightarrowfill@#1\crcr}}}%
\def\underleftarrow{\mathpalette\underleftarrow@}%
\def\underleftarrow@#1#2{\vtop{\ialign{##\crcr$\m@th\hfil#1#2\hfil
  $\crcr\noalign{\nointerlineskip}\leftarrowfill@#1\crcr}}}%
\def\underleftrightarrow{\mathpalette\underleftrightarrow@}%
\def\underleftrightarrow@#1#2{\vtop{\ialign{##\crcr$\m@th
  \hfil#1#2\hfil$\crcr
 \noalign{\nointerlineskip}\leftrightarrowfill@#1\crcr}}}%
\def\qopnamewl@#1{\mathop{\operator@font#1}\nlimits@}
\let\nlimits@\displaylimits
\def\setboxz@h{\setbox\z@\hbox}
\def\varlim@#1#2{\mathop{\vtop{\ialign{##\crcr
 \hfil$#1\m@th\operator@font lim$\hfil\crcr
 \noalign{\nointerlineskip}#2#1\crcr
 \noalign{\nointerlineskip\kern-\ex@}\crcr}}}}
 \def\rightarrowfill@#1{\m@th\setboxz@h{$#1-$}\ht\z@\z@
  $#1\copy\z@\mkern-6mu\cleaders
  \hbox{$#1\mkern-2mu\box\z@\mkern-2mu$}\hfill
  \mkern-6mu\mathord\rightarrow$}
\def\leftarrowfill@#1{\m@th\setboxz@h{$#1-$}\ht\z@\z@
  $#1\mathord\leftarrow\mkern-6mu\cleaders
  \hbox{$#1\mkern-2mu\copy\z@\mkern-2mu$}\hfill
  \mkern-6mu\box\z@$}
\def\projlim{\qopnamewl@{proj\,lim}}
\def\injlim{\qopnamewl@{inj\,lim}}
\def\varinjlim{\mathpalette\varlim@\rightarrowfill@}
\def\varprojlim{\mathpalette\varlim@\leftarrowfill@}
\def\varliminf{\mathpalette\varliminf@{}}
\def\varliminf@#1{\mathop{\underline{\vrule\@depth.2\ex@\@width\z@
   \hbox{$#1\m@th\operator@font lim$}}}}
\def\varlimsup{\mathpalette\varlimsup@{}}
\def\varlimsup@#1{\mathop{\overline
  {\hbox{$#1\m@th\operator@font lim$}}}}
\def\align{\@verbatim \frenchspacing\@vobeyspaces \@alignverbatim
You are using the "align" environment in a style in which it is not defined.}
\let\csname endalign*\endcsname =\endtrivlist
\def\alignat{\@verbatim \frenchspacing\@vobeyspaces \@alignatverbatim
You are using the "alignat" environment in a style in which it is not defined.}
\let\csname endalignat*\endcsname =\endtrivlist
\def\xalignat{\@verbatim \frenchspacing\@vobeyspaces \@xalignatverbatim
You are using the "xalignat" environment in a style in which it is not defined.}
\let\csname endxalignat*\endcsname =\endtrivlist
\def\gather{\@verbatim \frenchspacing\@vobeyspaces \@gatherverbatim
You are using the "gather" environment in a style in which it is not defined.}
\let\csname endgather*\endcsname =\endtrivlist
\def\multiline{\@verbatim \frenchspacing\@vobeyspaces \@multilineverbatim
You are using the "multiline" environment in a style in which it is not defined.}
\let\csname endmultiline*\endcsname =\endtrivlist
\def\arrax{\@verbatim \frenchspacing\@vobeyspaces \@arraxverbatim
You are using a type of "array" construct that is only allowed in AmS-LaTeX.}
\def\tabulax{\@verbatim \frenchspacing\@vobeyspaces \@tabulaxverbatim
You are using a type of "tabular" construct that is only allowed in AmS-LaTeX.}
\let\csname endarrax*\endcsname =\endtrivlist
\let\csname endtabulax*\endcsname =\endtrivlist
 \def\endequation{%
     \ifmmode\ifinner 
      \iftag@
        \addtocounter{equation}{-1} 
        $\hfil
           \displaywidth\linewidth\@taggnum\egroup \endtrivlist
        \global\tag@false
        \global\@ignoretrue   
      \else
        $\hfil
           \displaywidth\linewidth\@eqnnum\egroup \endtrivlist
        \global\tag@false
        \global\@ignoretrue 
      \fi
     \else   
      \iftag@
        \addtocounter{equation}{-1} 
        \eqno \hbox{\@taggnum}
        \global\tag@false%
        $$\global\@ignoretrue
      \else
        \eqno \hbox{\@eqnnum}
        $$\global\@ignoretrue
      \fi
     \fi\fi
 } 
 \newif\iftag@ \tag@false
 \def\TCItag{\@ifnextchar*{\@TCItagstar}{\@TCItag}}
 \def\@TCItag#1{%
     \global\tag@true
     \global\def\@taggnum{(#1)}}
 \def\@TCItagstar*#1{%
     \global\tag@true
     \global\def\@taggnum{#1}}
     \def\tag{\@ifnextchar*{\@tagstar}{\@tag}}
     \def\@tag#1{%
         \global\tag@true
         \global\def\@taggnum{(#1)}}
     \def\@tagstar*#1{%
         \global\tag@true
         \global\def\@taggnum{#1}}
\begin{document}
\title{$Q$ curvature on a class of manifolds with dimension at least $5$}
\author{Fengbo Hang}
\address{Courant Institute, New York University, 251 Mercer Street, New York
NY 10012}
\email{fengbo@cims.nyu.edu}
\author{Paul C. Yang}
\address{Department of Mathematics, Princeton University, Fine Hall,
Washington Road, Princeton NJ 08544}
\email{yang@math.princeton.edu}
\thanks{The research of Yang is supported by NSF grant 1104536.}
\date{}

\begin{abstract}
For a smooth compact Riemannian manifold with positive Yamabe invariant,
positive $Q$ curvature and dimension at least $5$, we prove the existence of
a conformal metric with constant $Q$ curvature. Our approach is based on the
study of extremal problem for a new functional involving the Paneitz
operator.
\end{abstract}

\maketitle

\section{Introduction\label{sec1}}

Recall the definition of the $4$th order Paneitz operator and its associated 
$Q$ curvature \cite{B,P}: when $\left( M,g\right) $ is a smooth compact $n$
dimensional Riemannian manifold with $n\geq 3$, the $Q$ curvature is given by%
\begin{eqnarray}
Q &=&-\frac{1}{2\left( n-1\right) }\Delta R-\frac{2}{\left( n-2\right) ^{2}}%
\left\vert Rc\right\vert ^{2}+\frac{n^{3}-4n^{2}+16n-16}{8\left( n-1\right)
^{2}\left( n-2\right) ^{2}}R^{2}  \label{eq1.1} \\
&=&-\Delta J-2\left\vert A\right\vert ^{2}+\frac{n}{2}J^{2}.  \notag
\end{eqnarray}%
Here $R$ is the scalar curvature, $Rc$ is the Ricci tensor and%
\begin{equation}
J=\frac{R}{2\left( n-1\right) },\quad A=\frac{1}{n-2}\left( Rc-Jg\right) .
\label{eq1.2}
\end{equation}%
The Paneitz operator is given by%
\begin{eqnarray}
&&P\varphi  \label{eq1.3} \\
&=&\Delta ^{2}\varphi +\frac{4}{n-2}\func{div}\left( Rc\left( \nabla \varphi
,e_{i}\right) e_{i}\right) -\frac{n^{2}-4n+8}{2\left( n-1\right) \left(
n-2\right) }\func{div}\left( R\nabla \varphi \right) +\frac{n-4}{2}Q\varphi 
\notag \\
&=&\Delta ^{2}\varphi +\func{div}\left( 4A\left( \nabla \varphi
,e_{i}\right) e_{i}-\left( n-2\right) J\nabla \varphi \right) +\frac{n-4}{2}%
Q\varphi .  \notag
\end{eqnarray}%
Here $e_{1},\cdots ,e_{n}$ is a local orthonormal frame with respect to $g$.
When $n\neq 4$, under a conformal change of the metric, the operator
satisfies%
\begin{equation}
P_{\rho ^{\frac{4}{n-4}}g}\varphi =\rho ^{-\frac{n+4}{n-4}}P_{g}\left( \rho
\varphi \right) .  \label{eq1.4}
\end{equation}%
This is similar to the conformal Laplacian operator, which appears naturally
when considering transformation law of the scalar curvature under conformal
change of metric in dimension greater than $2$ (\cite{LP}). As a consequence
we have%
\begin{equation}
P_{\rho ^{\frac{4}{n-4}}g}\varphi \cdot \psi d\mu _{\rho ^{\frac{4}{n-4}%
}g}=P_{g}\left( \rho \varphi \right) \cdot \rho \psi d\mu _{g}.
\label{eq1.5}
\end{equation}%
Here $\mu _{g}$ is the measure associated with metric $g$.

In dimension $4$, the Paneitz operator is given by%
\begin{equation}
P\varphi =\Delta ^{2}\varphi +2\func{div}\left( Rc\left( \nabla \varphi
,e_{i}\right) e_{i}\right) -\frac{2}{3}\func{div}\left( R\nabla \varphi
\right) ,  \label{eq1.6}
\end{equation}%
and its conformal covariance property takes the form%
\begin{equation}
P_{e^{2w}g}\varphi =e^{-4w}P_{g}\varphi .  \label{eq1.7}
\end{equation}%
Following the basic work \cite{CGY} in dimension $4$ on the $4$th order $Q$
curvature equation, there has been several studies on this equation in
dimension $3$ by \cite{HY1, XY2,YZ}, and in dimensions greater than $4$ by 
\cite{DHL, HeR1, HeR2, HuR, QR1, QR2}.

While it is important to determine conditions under which the Paneitz
operator is positive, we discover that it is sufficient for our purpose in
this article to determine when its Green's function is positive. This is a
property that is conformally invariant: observe that by (\ref{eq1.4}),%
\begin{equation}
\ker P_{g}=0\Leftrightarrow \ker P_{\rho ^{\frac{4}{n-4}}g}=0,  \label{eq1.8}
\end{equation}%
and under this assumption, the Green's functions $G_{P}$ satisfy the
transformation law%
\begin{equation}
G_{P,\rho ^{\frac{4}{n-4}}g}\left( p,q\right) =\rho \left( p\right)
^{-1}\rho \left( q\right) ^{-1}G_{P,g}\left( p,q\right) .  \label{eq1.9}
\end{equation}

In analogy with the preliminary study of the classical Yamabe problem (\cite%
{LP}), the first question would be whether one can find a conformal
invariant condition for the existence of a conformal metric with positive $Q$
curvature. In the case Yamabe invariant $Y\left( g\right) $ is positive, the
existence of a conformal metric with positive $Q$ curvature is equivalent to
the requirements that $\ker P=0$ and the Green's function $G_{P}>0$ (\cite%
{HY4}).

The basic question of interest is to find constant $Q$ curvature metric in a
conformal class, in the same spirit as Yamabe problem. The main aim of the
present article is to prove the following

\begin{theorem}
\label{thm1.1}Let $\left( M,g\right) $ be a smooth compact $n$ dimensional
Riemannian manifold with $n\geq 5$, $Y\left( g\right) >0$, $Q\geq 0$ and not
identically zero, then $\ker P=0$, the Green's function of $P$ is positive
and there exists a conformal metric $\widetilde{g}$ with $\widetilde{Q}=1$.
\end{theorem}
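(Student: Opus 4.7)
The plan proceeds in three parts: confirming positivity of the Green's function, setting up a variational problem phrased via $G_P$, and securing compactness of a maximizing sequence by comparison with the sphere.

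\emph{Step 1 (positivity).} Under the hypotheses $Y(g)>0$ and $Q\ge 0$, $Q\not\equiv 0$, the kernel of $P$ vanishes and $G_P>0$. This is a consequence of the conformal invariance (\ref{eq1.8}) of $\ker P$ combined with the positivity criterion for the Green's function of $P$ established in the earlier work \cite{HY4}. It is the essential structural input for what follows, since it makes the functional in Step 2 well-defined, positive, and Euler--Lagrange-friendly.

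\emph{Step 2 (the functional and Euler--Lagrange).} Let $p=\frac{2n}{n+4}$, the H\"older conjugate of the critical Sobolev exponent $\frac{2n}{n-4}$. For $0\le f\in L^{p}(M)$ with $f\not\equiv 0$, set
\begin{equation*}
J(f)=\frac{\iint_{M\times M}G_P(x,y)\,f(x)f(y)\,d\mu_g(x)\,d\mu_g(y)}{\bigl(\int_M f^{p}\,d\mu_g\bigr)^{2/p}},\qquad \Lambda(g)=\sup\bigl\{J(f):0\le f\in L^{p},\ f\not\equiv 0\bigr\}.
\end{equation*}
Positivity of $G_P$ forces $J>0$, while the pointwise bound $G_P(x,y)\le C\,d_g(x,y)^{4-n}$ combined with the Hardy--Littlewood--Sobolev inequality on $(M,g)$ gives $\Lambda(g)<\infty$. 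If $f\ge 0$ realizes $\Lambda(g)$, then $u(x):=\int G_P(x,y)f(y)\,d\mu_g(y)$ is strictly positive (since $G_P>0$), and the Euler--Lagrange equation forces $u=c\,f^{p-1}$, equivalently $P_g u=c^{-(n+4)/(n-4)}u^{(n+4)/(n-4)}$. By the conformal covariance (\ref{eq1.4}), the metric $\widetilde g=u^{4/(n-4)}g$ then has constant positive $Q$-curvature, and a final scaling reduces to $\widetilde Q\equiv 1$.

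\emph{Step 3 (compactness).} Given a maximizing sequence $f_i\ge 0$ with $\|f_i\|_{L^{p}}=1$, a concentration-compactness analysis shows that either $f_i$ converges strongly (yielding a maximizer), or mass escapes into bubbles at finitely many concentration points, each contributing exactly the optimal Euclidean (equivalently spherical) HLS constant $\Lambda(S^n)$. Ruling out bubbling therefore reduces to the strict inequality $\Lambda(g)>\Lambda(S^n)$. A significant advantage over the usual minimization of $\int uPu/(\int u^{2n/(n-4)})^{(n-4)/n}$ is that working with $f\ge 0$ and reconstructing $u=G_P\!\ast\!f$ automatically produces a \emph{positive} solution, entirely avoiding the sign-changing minimizer pathology that plagues the primal formulation when $P$ is not positive.

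\emph{Main obstacle.} The strict sphere comparison is the crux. The plan is to choose $p_0\in M$ with $Q(p_0)>0$ (available since $Q\not\equiv 0$), expand $G_P(p_0,\cdot)$ in $g$-normal coordinates as a Euclidean singular part $c_n\,d_g(p_0,\cdot)^{4-n}$ plus a regular remainder, and test $J$ against a rescaled Euclidean HLS extremal transplanted to a neighborhood of $p_0$. The leading contribution reproduces $\Lambda(S^n)$, while the correction is controlled by the value at $p_0$ of the regular part of $G_P$---a ``Paneitz mass'' playing here the role of the ADM mass in Schoen's resolution of the Yamabe problem. The delicate point is to force its sign in the favorable direction using only the globally positive Green's function, the sign condition $Q\ge 0$, and $Y(g)>0$; this is where the main technical work is concentrated, particularly in the low dimensions $n=5,6,7$, and may require first a conformal change bringing the metric into a normal form where the expansion of $G_P$ is tractable.
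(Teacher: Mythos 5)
Your overall strategy matches the paper's exactly: invoke the positivity criterion from \cite{HY4} for $\ker P=0$ and $G_P>0$, maximize the dual HLS-type functional (the paper's $\Theta _{4}\left( g\right) $, your $\Lambda(g)$), reduce existence of a positive maximizer to the strict sphere inequality $\Theta _{4}\left( g\right) >\Theta _{4}\left( S^{n}\right) $ via concentration-compactness and the positivity of $G_P$, and establish that strict inequality by testing with functions built from the asymptotic expansion of $G_P$. This is precisely the chain Propositions~\ref{prop2.1}, \ref{prop3.1} and Lemma~\ref{lem3.1} feeding into Theorem~\ref{thm1.3}, which yields Theorem~\ref{thm1.1}.

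The crux, which you call the ``main obstacle,'' is where the proposal has genuine gaps. You propose to pick $p_0$ with $Q(p_0)>0$ and control the correction by a ``Paneitz mass,'' i.e.\ the constant in the expansion of $G_P$. That is the correct mechanism only when $n=5,6,7$ or $(M,g)$ is locally conformally flat: there $2n(2-n)(4-n)\omega_n G_{P,p}=r^{4-n}+A_0+O^{(4)}(r)$, the selection of $p$ is arbitrary (not tied to $Q(p)>0$), and $A_0>0$ follows from the positive-mass theorem in \cite{HY4}. For $n\geq 8$ not locally conformally flat, Proposition~\ref{prop2.2} shows the constant term is preceded by polynomial corrections $r^{4-n}\psi_4,\dots$ (and logarithms in even $n$), which dominate; one must instead choose $p$ with $W(p)\neq 0$, and the sign of the leading correction is governed by $|W(p)|^2$, not by any mass. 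A second point your plan misses: because the denominator of $J$ carries the subcritical norm $L^{2n/(n+4)}$, the annulus error in the transplanted-bubble computation is substantially harder to absorb than in the primal $Y_4$ functional; the paper therefore builds the test function from the full Green's function through dimension $9$ and must treat $n=8$, $n=9$, and $n\geq 10$ as distinct cases (Cases~\ref{case3.3}, \ref{case3.4}, \ref{case3.5}). As written, your plan only sketches the mass-dominated low-dimensional regime and would leave the non--locally-conformally-flat case with $n\geq 8$ unaddressed.
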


\begin{remark}
\label{rmk1.1}Let $\left( M^{n},g\right) $ be a smooth compact Riemannian
manifold with $n\geq 5$, $Y\left( g\right) >0$. Denote $L=-\frac{4\left(
n-1\right) }{n-2}\Delta +R$ as the conformal Laplacian operator and for $%
p\in M$, $G_{L,p}$ as the Green's function of $L$ with pole at $p$. Define 
\begin{equation*}
\Gamma _{1}\left( p,q\right) =2^{\frac{n-6}{n-2}}n^{-\frac{2}{n-2}}\left(
n-1\right) ^{\frac{n-4}{n-2}}\left( n-2\right) ^{-3}\omega _{n}^{-\frac{2}{%
n-2}}G_{L}\left( p,q\right) ^{\frac{n-4}{n-2}}\left\vert Rc_{G_{L,p}^{\frac{4%
}{n-2}}g}\right\vert _{g}^{2}\left( q\right) .
\end{equation*}%
Here $\omega _{n}$ is the volume of unit ball in $\mathbb{R}^{n}$, $%
G_{L}\left( p,q\right) =G_{L,p}\left( q\right) $. The associated integral
operator $T_{\Gamma _{1}}$ is given by%
\begin{equation*}
T_{\Gamma _{1}}\left( \varphi \right) \left( p\right) =\int_{M}\Gamma
_{1}\left( p,q\right) \varphi \left( q\right) d\mu \left( q\right)
\end{equation*}%
for any nice function $\varphi $ on $M$. In \cite{HY5}, it is shown that the
spectrum $\sigma \left( T_{\Gamma _{1}}\right) $ and spectral radius $%
r_{\sigma }\left( T_{\Gamma _{1}}\right) $ are conformal invariants,
moreover the following statements are equivalent:

\begin{enumerate}
\item there exists a conformal metric $\widetilde{g}$ with $\widetilde{Q}>0$.

\item $\ker P=0$ and the Green's function of Paneitz operator $G_{P}\left(
p,q\right) >0$ for $p\neq q$.

\item $\ker P=0$ and there exists $p\in M$ such that $G_{P}\left( p,q\right)
>0$ for $q\neq p$.

\item $r_{\sigma }\left( T_{\Gamma _{1}}\right) <1$.
\end{enumerate}

Under the assumption $Q\geq 0$ and not identically zero, we have $r_{\sigma
}\left( T_{\Gamma _{1}}\right) <1$.
\end{remark}

The fundamental difficulty of the lack of maximum principle in this $4$th
order equation has recently been overcome by the work in \cite{GM}.
Following this development, similar results in dimension $3$ were proved in 
\cite{HY3,HY4} (see also closely related \cite{HY2}). Dimension $4$ case
does not suffer from this difficulty and was treated in many articles like 
\cite{CY,DM,FR} and so on. For a locally conformally flat manifold with
positive Yamabe invariant and Poincare exponent less than $\frac{n-4}{2}$
(see \cite{SY}), Theorem \ref{thm1.1} was proved in \cite{QR2} by apriori
estimates and connecting the equation to Yamabe equation through a path of
integral equations. Under the slightly more stringent conditions $R>0$ and $%
Q>0$, Theorem \ref{thm1.1} was proved in \cite{GM} through the study of a
non-local flow. Here we will derive Theorem \ref{thm1.1} by maximizing a
functional (see (\ref{eq1.16}) and (\ref{eq2.2})) involving the Paneitz
operator (see Theorem \ref{thm1.3} for more details).

For $u,v\in C^{\infty }\left( M\right) $, we denote the quadratic form
associated to $P$ as%
\begin{eqnarray}
&&E\left( u,v\right)  \label{eq1.10} \\
&=&\int_{M}Pu\cdot vd\mu  \notag \\
&=&\int_{M}\left( \Delta u\Delta v-\frac{4}{n-2}Rc\left( \nabla u,\nabla
v\right) +\frac{n^{2}-4n+8}{2\left( n-1\right) \left( n-2\right) }R\nabla
u\cdot \nabla v\right.  \notag \\
&&\left. +\frac{n-4}{2}Quv\right) d\mu  \notag \\
&=&\int_{M}\left( \Delta u\Delta v-4A\left( \nabla u,\nabla v\right) +\left(
n-2\right) J\nabla u\cdot \nabla v+\frac{n-4}{2}Quv\right) d\mu ,  \notag
\end{eqnarray}%
and%
\begin{equation}
E\left( u\right) =E\left( u,u\right) .  \label{eq1.11}
\end{equation}%
By the integration by parts formula in (\ref{eq1.10}) we know that $E\left(
u,v\right) $ extends continuously to $u,v\in H^{2}\left( M\right) $.

To find the metric $\widetilde{g}$ in Theorem \ref{thm1.1}, we write $%
\widetilde{g}=\rho ^{\frac{4}{n-4}}g$, then the equation $\widetilde{Q}=1$
becomes%
\begin{equation}
P_{g}\rho =\frac{n-4}{2}\rho ^{\frac{n+4}{n-4}},\quad \rho \in C^{\infty
}\left( M\right) ,\rho >0.  \label{eq1.12}
\end{equation}%
Let%
\begin{equation}
Y_{4}\left( g\right) =\inf_{u\in H^{2}\left( M\right) \backslash \left\{
0\right\} }\frac{E\left( u\right) }{\left\Vert u\right\Vert _{L^{\frac{2n}{%
n-4}}}^{2}},  \label{eq1.13}
\end{equation}%
then $Y_{4}\left( \tau ^{\frac{4}{n-4}}g\right) =Y_{4}\left( g\right) $ for
any positive smooth function $\tau $. Hence $Y_{4}\left( g\right) $ is a
conformal invariant. If $\left( M,g\right) $ is not locally conformally flat
and $n\geq 8$, or $\left( M,g\right) $ is locally conformally flat with $%
Y\left( g\right) >0$, $\ker P=0$ and the Green's function of $P$, $G_{P}>0$,
or $n=5,6,7$ with $Y\left( g\right) >0$, $\ker P=0$ and $G_{P}>0$, one can
show $Y_{4}\left( g\right) $ is achieved (see \cite{ER,GM,R}), but in
general it is difficult to know whether the minimizer is positive. Under the
additional assumption $Y_{4}\left( g\right) >0$ and $G_{P}>0$, it was
observed in \cite{R} that the minimizer cannot change sign. Combining this
with the positivity criterion of Green's function in \cite{HY4}, we arrive at

\begin{theorem}
\label{thm1.2}Let $\left( M,g\right) $ be a smooth compact $n$ dimensional
Riemannian manifold with $n\geq 5$, $Y\left( g\right) >0,Y_{4}\left(
g\right) >0,Q\geq 0$ and not identically zero, then

\begin{enumerate}
\item $Y_{4}\left( g\right) \leq Y_{4}\left( S^{n}\right) $, and equality
holds if and only if $\left( M,g\right) $ is conformally diffeomorphic to
the standard sphere.

\item $Y_{4}\left( g\right) $ is always achieved. Any minimizer must be
smooth and cannot change sign. In particular we can find a constant $Q$
curvature metric in the conformal class.

\item If $\left( M,g\right) $ is not conformally diffeomorphic to the
standard sphere, then the set of all minimizers $u$ for $Y_{4}\left(
g\right) $, after normalizing with $\left\Vert u\right\Vert _{L^{\frac{2n}{%
n-4}}}=1$, is compact in $C^{\infty }$ topology.
\end{enumerate}
\end{theorem}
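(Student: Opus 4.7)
The plan is to combine the existence theory for $Y_4$ minimizers (Esposito--Robert, Gursky--Malchiodi, Robert) with the positivity of the Green's function $G_P$ provided by Remark \ref{rmk1.1}. Under the hypotheses $Y(g)>0$, $Q\geq 0$, $Q\not\equiv 0$, Remark \ref{rmk1.1} already yields $\ker P=0$ and $G_P(p,q)>0$ for $p\neq q$; these are precisely the analytic inputs needed for the cited existence results, so the bulk of the argument consists in invoking them and carrying out the sign analysis for part (2).

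For (1), the inequality $Y_4(g)\leq Y_4(S^n)$ follows by the standard test function method: insert into the Rayleigh quotient a cutoff of the extremal bubble for $Y_4(S^n)$ concentrated at a point $p\in M$ and let the concentration parameter shrink. To obtain strict inequality when $(M,g)$ is not conformally $S^n$, one performs an Aubin--Schoen-style expansion: if $n\geq 8$ and the Weyl tensor is not identically zero, the local Weyl term produces a negative correction of the correct order; otherwise (that is, $n\in\{5,6,7\}$ or $(M,g)$ locally conformally flat), one uses conformal normal coordinates together with the asymptotic expansion of $G_P$ at $p$, where the positive-mass-type contribution coming from $G_P>0$ plays the role of Schoen's mass term, as in \cite{GM,QR2,HY4}. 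Equality $Y_4(g)=Y_4(S^n)$ then forces the mass to vanish at every point, hence conformal equivalence to $S^n$.

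For (2), since $Y_4(g)<Y_4(S^n)$ (unless $(M,g)$ is conformally $S^n$, in which case a minimizer is the standard bubble), a concentration--compactness argument on a minimizing sequence yields a nontrivial $u\in H^2(M)$ satisfying
\[
Pu=Y_4(g)\,|u|^{p-2}u,\qquad p=\tfrac{2n}{n-4},
\]
with $u\in C^\infty(M)$ by elliptic bootstrap. The delicate step is to show $u$ does not change sign; following \cite{R}, set
\[
\tilde u(x)=Y_4(g)\int_M G_P(x,y)\,|u(y)|^{p-1}\,d\mu(y),
\]
which satisfies $\tilde u\geq |u|$ pointwise because $G_P>0$ and $u$ itself is represented by $G_P$ against $Pu$. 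Computing $E(\tilde u)$ and applying H\"older's inequality with exponents $p$ and $p/(p-1)$ gives
\[
\frac{E(\tilde u)}{\|\tilde u\|_p^2}\leq \frac{Y_4(g)}{\|\tilde u\|_p},
\]
which combined with the definition of $Y_4(g)$ and $\|\tilde u\|_p\geq\|u\|_p=1$ forces $\|\tilde u\|_p=1$ and equality in H\"older, hence $\tilde u\equiv |u|$. Combining the Green's function representations for $u$ and for $|u|$ yields
\[
u_\pm(x)=Y_4(g)\int_M G_P(x,y)\,|u(y)|^{p-2}\,u_\pm(y)\,d\mu(y),
\]
so if both $u_+$ and $u_-$ were nontrivial, positivity of $G_P$ would force each to be strictly positive everywhere, contradicting the disjointness of their supports. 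Hence $u$ does not change sign and $\rho=|u|$ provides the conformal metric $\tilde g=\rho^{4/(n-4)}g$ with constant $Q$ curvature.

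For (3), argue by contradiction: a non-precompact sequence of $L^p$-normalized minimizers must concentrate at a point, and rescaling via the conformal covariance \eqref{eq1.4} extracts a positive solution of $\Delta^2 U=c\,U^{(n+4)/(n-4)}$ on $\mathbb{R}^n$ of energy exactly $Y_4(S^n)$, forcing $Y_4(g)=Y_4(S^n)$ and contradicting (1). Uniform $L^\infty$ bounds then feed elliptic regularity for $P$ to upgrade convergence to $C^\infty$. The main obstacle across all three parts is the non-sign-changing step in (2): since $|u|\notin H^2(M)$ in general and $P$ admits no maximum principle, the Green's function representation together with the H\"older equality case replaces the elementary trick of replacing $u$ by $|u|$ familiar from the classical Yamabe problem, and it is essential that $G_P>0$, a property guaranteed here by Remark \ref{rmk1.1}.
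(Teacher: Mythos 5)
Your proposal is correct; parts (2) and (3) follow the paper's route, while part (1) takes a genuinely different (and, by the paper's own admission, more elementary) path. The paper derives $Y_4(g)<Y_4(S^n)$ \emph{indirectly}: Lemma~\ref{lem2.2} gives $Y_4(g)\leq Y_4^+(g)\leq 1/\Theta_4(g)$, and Proposition~\ref{prop3.1}, proved by test-function expansions for the \emph{dual} functional $\Theta_4$ using the Green's-function expansion of Proposition~\ref{prop2.2}, gives $\Theta_4(g)>\Theta_4(S^n)=1/Y_4(S^n)$ when $(M,g)$ is not conformally the sphere. You instead propose the classical Aubin--Schoen expansion directly on the $Y_4$ Rayleigh quotient (Weyl term for $n\geq 8$ non-LCF, Paneitz positive-mass term otherwise). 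The paper explicitly acknowledges this alternative in the proof of Proposition~\ref{prop4.1}, noting the corresponding computation is easier but that Proposition~\ref{prop3.1} is the stronger statement needed elsewhere; so your route trades generality for simplicity. Your sign analysis in part (2) is precisely the observation of \cite{R} as implemented in Proposition~\ref{prop4.2}: your $\tilde u = Y_4(g)\,G_P(|u|^{p-1})$ equals the paper's $v=G_P(|Pu|)$, the H\"older chain is identical, and the conclusion $|u|=\tilde u>0$ forces constant sign. Two small imprecisions to fix. First, you assert $u\in C^\infty(M)$ by bootstrap \emph{before} the sign analysis; in general the nonlinearity $|u|^{p-2}u$ only gives $u\in C^{4,\alpha}$ at that stage, and full smoothness is obtained only after showing $|u|=\tilde u>0$ everywhere removes the nonsmooth point of the nonlinearity (the paper does the two steps in that order). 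Second, the hypothesis $Y_4(g)>0$ (which is precisely where Theorem~\ref{thm1.2} differs from Theorem~\ref{thm1.3}) is used silently in the chain $Y_4(g)\leq E(\tilde u)/\|\tilde u\|_p^2\leq Y_4(g)/\|\tilde u\|_p\leq Y_4(g)$: the last inequality needs $Y_4(g)\geq 0$ and the whole squeeze fails if $Y_4(g)<0$. Part (3) is the standard blow-up/concentration argument, which the paper outsources to Lemma~\ref{lem4.1}; your sketch reproduces its proof.
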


Note the positivity of $Y_{4}\left( g\right) $ is equivalent to the
positivity of Paneitz operator $P$. There are several criterion for the
positivity of $P$ (see \cite[Theorem 1.6]{CHY} and \cite{GM, XY1}). On the
other hand, in a recent preprint \cite{GHL}, it is proved that if $\left(
M,g\right) $ is a smooth compact Riemannian manifold with dimension $n\geq 6$%
, and $Y\left( g\right) >0$, $Y_{4}\left( g\right) >0$, then we can find a
conformal metric $\widetilde{g}$ with $\widetilde{R}>0$ and $\widetilde{Q}>0$%
. In particular, it follows from \cite{GM} that any conformal metric with
constant $Q$ curvature must have positive scalar curvature. Similar
statement for $n=5$ is likely to be true but could not be justified due to
the approach there.

In general it is not known whether $Y\left( g\right) >0,Q\geq 0$ and not
identically zero would imply $Y_{4}\left( g\right) >0$. To get around this
difficulty when proving Theorem \ref{thm1.1} we note that by \cite[%
Proposition 1.1]{HY4} if $Y\left( g\right) >0$, $Q\geq 0$ and not
identically zero then $\ker P=0$, and the Green's function of $P$, $G_{P}>0$%
. Hence we can define an integral operator (the inverse of $P$) as%
\begin{equation}
G_{P}f\left( p\right) =\int_{M}G_{P}\left( p,q\right) f\left( q\right) d\mu
\left( q\right) .  \label{eq1.14}
\end{equation}%
If we denote $f=\rho ^{\frac{n+4}{n-4}}$, then equation (\ref{eq1.12})
becomes%
\begin{equation}
G_{P}f=\frac{2}{n-4}f^{\frac{n-4}{n+4}},\quad f\in C^{\infty }\left(
M\right) ,f>0.  \label{eq1.15}
\end{equation}%
Let%
\begin{eqnarray}
\Theta _{4}\left( g\right) &=&\sup_{f\in L^{\frac{2n}{n+4}}\left( M\right)
\backslash \left\{ 0\right\} }\frac{\int_{M}G_{P}f\cdot fd\mu }{\left\Vert
f\right\Vert _{L^{\frac{2n}{n+4}}}^{2}}  \label{eq1.16} \\
&=&\sup_{f\in L^{\frac{2n}{n+4}}\left( M\right) \backslash \left\{ 0\right\}
}\frac{\int_{M\times M}G_{P}\left( p,q\right) f\left( p\right) f\left(
q\right) d\mu \left( p\right) d\mu \left( q\right) }{\left\Vert f\right\Vert
_{L^{\frac{2n}{n+4}}}^{2}}.  \notag
\end{eqnarray}%
It follows from the classical Hardy-Littlewood-Sobolev inequality (\cite{St}%
) that $\Theta _{4}\left( g\right) $ is always finite. Moreover it follows
from (\ref{eq1.9}) that for a positive smooth function $\rho $, $\Theta
_{4}\left( \rho ^{\frac{4}{n-4}}g\right) =\Theta _{4}\left( g\right) $ i.e. $%
\Theta _{4}\left( g\right) $ is a conformal invariant. If $\Theta _{4}\left(
g\right) $ is achieved by a maximizer $f$, using the fact that $G_{P}>0$, we
easily deduce that $f$ cannot change sign. $\Theta _{4}\left( g\right) $ has
a nice geometric description (see Lemma \ref{lem2.1}):%
\begin{equation}
\Theta _{4}\left( g\right) =\frac{2}{n-4}\sup \left\{ \frac{\int_{M}%
\widetilde{Q}d\widetilde{\mu }}{\left\Vert \widetilde{Q}\right\Vert _{L^{%
\frac{2n}{n+4}}\left( M,d\widetilde{\mu }\right) }^{2}}:\widetilde{g}\in %
\left[ g\right] \right\}  \label{eq1.17}
\end{equation}%
Here $\left[ g\right] $ denotes the conformal class of $g$ i.e.%
\begin{equation}
\left[ g\right] =\left\{ \rho ^{2}g:\rho \in C^{\infty }\left( M\right)
,\rho >0\right\} .  \label{eq1.18}
\end{equation}

\begin{theorem}
\label{thm1.3}Assume $\left( M,g\right) $ is a smooth compact $n$
dimensional Riemannian manifold with $n\geq 5$, $Y\left( g\right) >0$, $%
Q\geq 0$ and not identically zero, then

\begin{enumerate}
\item $\Theta _{4}\left( g\right) \geq \Theta _{4}\left( S^{n}\right) $,
here $S^{n}$ has the standard metric. $\Theta _{4}\left( g\right) =\Theta
_{4}\left( S^{n}\right) $ if and only if $\left( M,g\right) $ is conformally
diffeomorphic to the standard sphere.

\item $\Theta _{4}\left( g\right) $ is always achieved. Any maximizer $f$
must be smooth and cannot change sign. If $f>0$, then after scaling we have $%
G_{P}f=\frac{2}{n-4}f^{\frac{n-4}{n+4}}$ i.e. $Q_{f^{\frac{4}{n+4}}g}=1$.

\item If $\left( M,g\right) $ is not conformally diffeomorphic to the
standard sphere, then the set of all maximizers $f$ for $\Theta _{4}\left(
g\right) $, after normalizing with $\left\Vert f\right\Vert _{L^{\frac{2n}{%
n+4}}}=1$, is compact in the $C^{\infty }$ topology.
\end{enumerate}
\end{theorem}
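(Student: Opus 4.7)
The strategy is to adapt the classical Aubin--Schoen subcritical approximation scheme to the integral-operator formulation of $\Theta_4(g)$. Under the hypotheses, \cite{HY4} guarantees $\ker P=0$ and $G_P>0$, so $G_P\colon L^{\frac{2n}{n+4}}(M)\to L^{\frac{2n}{n-4}}(M)$ is a well-defined positive operator. Strict positivity of the kernel immediately rules out sign-changing maximizers: for any $f=f^+-f^-$ with both parts nontrivial,
\begin{equation*}
\int_M G_P|f|\cdot|f|\,d\mu \;-\; \int_M G_Pf\cdot f\,d\mu \;=\; 4\int_{M\times M}G_P(p,q)\,f^+(p)\,f^-(q)\,d\mu(p)\,d\mu(q) \;>\;0,
\end{equation*}
and $|f|$ has the same $L^{\frac{2n}{n+4}}$ norm as $f$, so any maximizer is sign-definite. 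Regularity is then a bootstrap: a nonnegative maximizer satisfies $G_P f=\lambda f^{\frac{n-4}{n+4}}$ for some $\lambda>0$, so applying $P$ yields $P(f^{\frac{n+4}{n-4}})\propto f$, and iterated elliptic regularity for the fourth-order operator $P$ upgrades $f$ to $C^\infty$; rescaling produces $\widetilde g=f^{\frac{4}{n+4}}g$ with $\widetilde Q=1$.

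To compute $\Theta_4(S^n)$ and prove the inequality $\Theta_4(g)\geq \Theta_4(S^n)$, recall that on the round sphere the Paneitz operator factorizes and its Green's function is a constant multiple of $d_{\mathrm{chord}}(p,q)^{-(n-4)}$; stereographic projection identifies $\Theta_4(S^n)$ with the sharp Hardy--Littlewood--Sobolev constant of Lieb for the Riesz kernel $|x-y|^{-(n-4)}$ on $\mathbb{R}^n$, attained on the conformal bubbles. For the lower bound on $\Theta_4(g)$, I would transplant these bubbles to $M$: in conformal normal coordinates at a point $p_0\in M$, the local expansion
\begin{equation*}
G_{P,g}(p_0,q)=c_n\,d_g(p_0,q)^{-(n-4)}\bigl(1+O(d_g(p_0,q)^2)\bigr),
\end{equation*}
combined with a concentrated bubble of scale $\varepsilon\to 0$, yields $\Theta_4(g)\geq \Theta_4(S^n)$ by direct computation of the functional.

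For existence, fix $q\in(\tfrac{2n}{n+4},2]$ slightly supercritical and consider the subcritical variant $\Theta_{4,q}(g)=\sup\{\int G_Pf\cdot f\,d\mu/\|f\|_{L^q}^2\}$. Compactness of $G_P$ on $L^q$ (via HLS combined with a Rellich-type argument) makes this supremum attained by a nonnegative $f_q$ with $\|f_q\|_{L^q}=1$ satisfying $G_Pf_q=\Theta_{4,q}(g)\,f_q^{q-1}$; bootstrapping gives $f_q\in C^\infty$, $f_q>0$. As $q\downarrow\tfrac{2n}{n+4}$, we have $\Theta_{4,q}(g)\to\Theta_4(g)$, and the dichotomy is: either $\{f_q\}$ is uniformly bounded in $L^\infty$, yielding a subsequential limit that is a maximizer of $\Theta_4(g)$, or $\max f_q\to\infty$ at some $p_q\to p_*$. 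In the concentration alternative, rescale $f_q$ about $p_q$ using the conformal covariance (\ref{eq1.9}) of $G_P$; the rescaled sequence converges on compacta of $\mathbb{R}^n$ to a nonnegative solution of the limiting Euclidean integral equation, which by the Lieb classification of HLS extremals is a conformal bubble, forcing $\limsup_{q\to 2n/(n+4)}\Theta_{4,q}(g)\leq \Theta_4(S^n)$. Thus strict inequality $\Theta_4(g)>\Theta_4(S^n)$ excludes blow-up and produces a maximizer.

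Part (1) then combines the lower bound with this blow-up dichotomy: on $S^n$ the bubble is a maximizer, so $\Theta_4(S^n)$ is attained there; conversely, if $\Theta_4(g)=\Theta_4(S^n)$ and no maximizer exists on $(M,g)$, the rescaled sequence $f_q^{\frac{4}{n+4}}g$ develops a bubble whose asymptotic matching with the global geometry of $M$ (a rigidity argument, in the spirit of Schoen's use of the positive mass theorem for the Yamabe problem, adapted to the fourth-order setting) produces a conformal diffeomorphism $M\to S^n$. For part (3), when $(M,g)$ is not conformally equivalent to $S^n$, the same blow-up analysis uniformly excludes concentration along any maximizing sequence, giving precompactness in $L^{\frac{2n}{n+4}}$; elliptic regularity applied to the integral equation upgrades this to $C^\infty$ compactness. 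The main technical obstacle will be the blow-up analysis: because $G_P$ is nonlocal, the rescaling must be combined with a quantitative control on the tail of $G_P$ away from the concentration point, and the rigidity step in the equality case requires a delicate use of the Lieb classification together with the conformal invariance encoded in (\ref{eq1.9}).
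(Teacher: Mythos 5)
Your subcritical-approximation scheme is a genuinely different route from the paper's, which instead establishes a Lions-type concentration-compactness lemma adapted to the integral functional (Lemma \ref{lem2.4}, Proposition \ref{prop2.1}) and then shows directly that $\Theta_4(g)>\Theta_4(S^n)$ by an explicit test-function computation (Proposition \ref{prop3.1}). Both are in principle viable; yours trades the concentration-compactness machinery for a blow-up dichotomy at the subcritical level, closer in spirit to Aubin's original treatment of the Yamabe problem. Your treatment of sign-definiteness via strict positivity of $G_P$ and the regularity bootstrap (citing the standard critical-exponent arguments) match the paper's Lemma \ref{lem3.1} and the final proof.

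However, there is a genuine gap in the way you handle part (1), and it propagates to parts (2) and (3). You need the strict inequality $\Theta_4(g)>\Theta_4(S^n)$ whenever $(M,g)$ is not conformally diffeomorphic to $S^n$, but your argument never actually proves it. The bubble transplant with the local expansion $G_{P,g}(p_0,q)=c_n\, d_g(p_0,q)^{-(n-4)}(1+O(d_g^2))$ gives only the non-strict bound $\Theta_4(g)\geq\Theta_4(S^n)$: the leading term reproduces the sphere value, and whether the next-order correction is strictly positive is exactly the content one must prove. Your fallback for the equality case is a rigidity argument triggered by blow-up, but the logic does not close: if $\Theta_4(g)=\Theta_4(S^n)$ and a maximizer happens to exist (blow-up does not occur), you have not excluded $M\not\cong S^n$. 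The paper avoids this by proving the strict inequality directly and unconditionally for non-sphere $M$ (Proposition \ref{prop3.1}), which then also supplies the hypothesis of the concentration-compactness criterion. This computation is delicate: in conformal normal coordinates there is no $O(d^2)$ correction (since $J(p)=0$, $A_{ij}(p)=0$); the relevant term is at order $d^4$ with a structure that depends on dimension and local conformal flatness (Proposition \ref{prop2.2}). For $n=5,6,7$ and the locally conformally flat case the sign of the constant $A_0$ in $G_{P,p}=r^{4-n}+A_0+\cdots$ is the positive mass theorem for the Paneitz operator; for $n=8$ there is a logarithmic correction driven by $|W(p)|^2$; and for $n\geq 9$ the relevant coefficient involves $|W(p)|^2$ with the explicit polynomial $\psi_4$. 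None of these can be read off from the crude $(1+O(d^2))$ expansion, and the exponent in $\|\cdot\|_{L^{2n/(n+4)}}$ being subquadratic makes the error control on the annulus more delicate than in the classical Yamabe test-function estimate, which is why the paper separates $n=8,9$ from the other dimensions. Your blow-up dichotomy would be a legitimate substitute for the paper's concentration-compactness lemma, but only after you supply the strict inequality independently.
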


It is worthwhile to note the similarity of Theorem \ref{thm1.2} and \ref%
{thm1.3} to classical Yamabe problem (\cite{LP,S}) and the integral equation
considered in \cite{HWY1,HWY2}. Indeed, the formulation of our approach
follows that of \cite{HWY2}. Integral equation formulation of the $Q$
curvature equation was used in \cite{QR2}. A similar functional for the
conformal Laplacian operator, $\Theta _{2}$ (see (\ref{eq4.8})) is also
considered in \cite{DoZ}. In Section \ref{sec2} below we will first give
other expressions for $\Theta _{4}\left( g\right) $ and discuss its relation
with $Y_{4}\left( g\right) $, then we will derive the concentration
compactness principle for the extremal problem of $\Theta _{4}\left(
g\right) $ and find the asymptotic expansion formula for the Green's
function of Paneitz operator. In Section \ref{sec3} we will show that
maximizers always exist and that they are smooth. In particular Theorem \ref%
{thm1.3} will follow. At last, in Section \ref{sec4} we will prove Theorem %
\ref{thm1.2}. Moreover we will show the approach to Theorem \ref{thm1.3}
gives another way to find constant scalar curvature metrics in a conformal
class.

The authors would like to thank Gursky and Malchiodi for making their work
available. We would also like to thank the referee for his/her careful
reading of the article and many comments which improve the presentation of
the paper.

\section{Some preparations\label{sec2}}

\subsection{The conformal invariants $Y_{4}\left( g\right) ,Y_{4}^{+}\left(
g\right) $ and $\Theta _{4}\left( g\right) $\label{sec2.1}}

Throughout this subsection we will assume $\left( M,g\right) $ is a smooth
compact $n$ dimensional Riemannian manifold with $n\geq 5$. Recall that%
\begin{equation}
Y_{4}\left( g\right) =\inf_{u\in H^{2}\left( M\right) \backslash \left\{
0\right\} }\frac{E\left( u\right) }{\left\Vert u\right\Vert _{L^{\frac{2n}{%
n-4}}}^{2}}=\inf_{u\in C^{\infty }\left( M\right) \backslash \left\{
0\right\} }\frac{\int_{M}Pu\cdot ud\mu }{\left\Vert u\right\Vert _{L^{\frac{%
2n}{n-4}}}^{2}}.  \label{eq2.1}
\end{equation}%
If in addition $Y\left( g\right) >0$, $Q\geq 0$ and not identically zero,
then%
\begin{eqnarray}
\Theta _{4}\left( g\right) &=&\sup_{f\in L^{\frac{2n}{n+4}}\left( M\right)
\backslash \left\{ 0\right\} }\frac{\int_{M}G_{P}f\cdot fd\mu }{\left\Vert
f\right\Vert _{L^{\frac{2n}{n+4}}}^{2}}  \label{eq2.2} \\
&=&\sup_{u\in W^{4,\frac{2n}{n+4}}\left( M\right) \backslash \left\{
0\right\} }\frac{\int_{M}Pu\cdot ud\mu }{\left\Vert Pu\right\Vert _{L^{\frac{%
2n}{n+4}}}^{2}}.  \notag
\end{eqnarray}%
The second equality in (\ref{eq2.2}) will be very useful for us later on
because the expression is local. It will facilitate our calculations in
estimating $\Theta _{4}\left( g\right) $. $\Theta _{4}\left( g\right) $ also
has a geometric description.

\begin{lemma}
\label{lem2.1}If $n\geq 5,Y\left( g\right) >0,Q\geq 0$ and not identically
zero, then%
\begin{equation}
\Theta _{4}\left( g\right) =\frac{2}{n-4}\sup \left\{ \frac{\int_{M}%
\widetilde{Q}d\widetilde{\mu }}{\left\Vert \widetilde{Q}\right\Vert _{L^{%
\frac{2n}{n+4}}\left( M,d\widetilde{\mu }\right) }^{2}}:\widetilde{g}\in %
\left[ g\right] \right\} .  \label{eq2.3}
\end{equation}
\end{lemma}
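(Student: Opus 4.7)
The plan is to establish a correspondence between conformal metrics $\widetilde{g}\in[g]$ and smooth positive test functions $f$ in the definition of $\Theta _{4}\left( g\right) $, under which the two ratios agree up to the factor $\frac{2}{n-4}$. Given $\widetilde{g}=\rho ^{\frac{4}{n-4}}g$ with $\rho >0$ smooth, I set $f:=P_{g}\rho $. Applying (\ref{eq1.4}) to $\varphi =1$ and noting that $P\cdot 1=\frac{n-4}{2}Q$ (the derivative terms in (\ref{eq1.3}) vanish on constants), I obtain
\begin{equation*}
\tfrac{n-4}{2}\widetilde{Q}=P_{\widetilde{g}}1=\rho ^{-\frac{n+4}{n-4}}P_{g}\rho =\rho ^{-\frac{n+4}{n-4}}f,
\end{equation*}
so $f=\frac{n-4}{2}\rho ^{\frac{n+4}{n-4}}\widetilde{Q}$. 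Using $\rho =G_{P}f$ together with $d\widetilde{\mu }=\rho ^{\frac{2n}{n-4}}d\mu $, direct substitution gives
\begin{equation*}
\int_{M}G_{P}f\cdot f\,d\mu =\int_{M}\rho f\,d\mu =\tfrac{n-4}{2}\int_{M}\widetilde{Q}\,d\widetilde{\mu },\qquad \left\Vert f\right\Vert _{L^{\frac{2n}{n+4}}\left( M,d\mu \right) }^{2}=\left( \tfrac{n-4}{2}\right) ^{2}\left\Vert \widetilde{Q}\right\Vert _{L^{\frac{2n}{n+4}}\left( M,d\widetilde{\mu }\right) }^{2},
\end{equation*}
so the functional ratio for $f$ equals $\frac{2}{n-4}$ times the geometric ratio for $\widetilde{g}$. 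Since $\ker P_{\widetilde{g}}=0$ under our hypotheses, $\widetilde{Q}\not\equiv 0$ and the geometric ratio is well defined. Because each $\widetilde{g}\in [g]$ yields an admissible (possibly signed, smooth) competitor $f$ in (\ref{eq2.2}), this already proves $\tfrac{2}{n-4}\sup_{\widetilde{g}}\frac{\int \widetilde{Q}\,d\widetilde{\mu}}{\left\Vert \widetilde{Q}\right\Vert ^{2}}\le \Theta _{4}\left( g\right)$.

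For the reverse inequality I reduce the supremum defining $\Theta _{4}\left( g\right) $ to smooth strictly positive $f$. Since $G_{P}>0$ pointwise by the hypotheses (through Proposition 1.1 of \cite{HY4}, cited before (\ref{eq1.14})),
\begin{equation*}
\int_{M\times M}G_{P}\left( p,q\right) f\left( p\right) f\left( q\right) d\mu \left( p\right) d\mu \left( q\right) \le \int_{M\times M}G_{P}\left( p,q\right) \left\vert f\left( p\right) \right\vert \left\vert f\left( q\right) \right\vert d\mu \left( p\right) d\mu \left( q\right) ,
\end{equation*}
while the $L^{\frac{2n}{n+4}}$ norm is invariant under $f\mapsto \left\vert f\right\vert $, so the supremum is attained over $f\ge 0$. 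Mollification followed by adding a small positive constant, combined with continuity of the ratio on $L^{\frac{2n}{n+4}}\setminus \{0\}$ (via the Hardy--Littlewood--Sobolev bound noted after (\ref{eq1.16})), further restricts to smooth $f>0$. For any such $f$, the function $\rho :=G_{P}f$ is smooth and strictly positive, hence $\widetilde{g}:=\rho ^{\frac{4}{n-4}}g\in [g]$ is a valid conformal metric realizing the matched ratio, which yields the reverse inequality and thus (\ref{eq2.3}).

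The main obstacle is the sign discrepancy: the supremum in $\Theta _{4}\left( g\right) $ is a priori over signed $f$, while the geometric supremum over $[g]$ corresponds naturally to smooth strictly positive conformal factors $\rho =G_{P}f$. Its resolution rests entirely on the pointwise positivity $G_{P}>0$ furnished by the standing hypotheses $Y\left( g\right) >0$ and $Q\ge 0$ not identically zero.
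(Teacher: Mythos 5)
Your proof is correct and follows essentially the same route as the paper: establish the correspondence $\rho \leftrightarrow f=P_{g}\rho$ between conformal factors and competitors (matching ratios up to the factor $\tfrac{2}{n-4}$), then use $G_{P}>0$ to reduce the supremum defining $\Theta_{4}(g)$ to nonnegative (and by density, smooth nonnegative) $f$. Your extra mollification-plus-$\varepsilon$ step to force $f>0$ is superfluous, since a smooth $f\geq 0$, $f\not\equiv 0$ already gives $\rho=G_{P}f>0$; the paper phrases this same observation as ``$u$ smooth, $Pu\geq 0$, $u\not\equiv 0$ implies $u>0$,'' but the underlying mechanism is identical.
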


\begin{proof}
Note that%
\begin{eqnarray*}
&&\frac{2}{n-4}\sup \left\{ \frac{\int_{M}\widetilde{Q}d\widetilde{\mu }}{%
\left\Vert \widetilde{Q}\right\Vert _{L^{\frac{2n}{n+4}}\left( M,d\widetilde{%
\mu }\right) }^{2}}:\widetilde{g}\in \left[ g\right] \right\} \\
&=&\sup \left\{ \frac{\int_{M}Pu\cdot ud\mu }{\left\Vert Pu\right\Vert _{L^{%
\frac{2n}{n+4}}}^{2}}:u\in C^{\infty }\left( M\right) ,u>0\right\} \\
&\leq &\Theta _{4}\left( g\right) .
\end{eqnarray*}%
On the other hand, by the positivity of $G_{P}$ we have%
\begin{eqnarray*}
&&\Theta _{4}\left( g\right) \\
&=&\sup \left\{ \frac{\int_{M}G_{P}f\cdot fd\mu }{\left\Vert f\right\Vert
_{L^{\frac{2n}{n+4}}}^{2}}:f\in L^{\frac{2n}{n+4}}\left( M\right) \backslash
\left\{ 0\right\} ,f\geq 0\right\} \\
&=&\sup \left\{ \frac{\int_{M}G_{P}f\cdot fd\mu }{\left\Vert f\right\Vert
_{L^{\frac{2n}{n+4}}}^{2}}:f\in C^{\infty }\left( M\right) \backslash
\left\{ 0\right\} ,f\geq 0\right\} \\
&=&\sup \left\{ \frac{\int_{M}Pu\cdot ud\mu }{\left\Vert Pu\right\Vert _{L^{%
\frac{2n}{n+4}}}^{2}}:u\in C^{\infty }\left( M\right) \backslash \left\{
0\right\} ,Pu\geq 0\right\} \\
&\leq &\sup \left\{ \frac{\int_{M}Pu\cdot ud\mu }{\left\Vert Pu\right\Vert
_{L^{\frac{2n}{n+4}}}^{2}}:u\in C^{\infty }\left( M\right) ,u>0\right\} \\
&=&\frac{2}{n-4}\sup \left\{ \frac{\int_{M}\widetilde{Q}d\widetilde{\mu }}{%
\left\Vert \widetilde{Q}\right\Vert _{L^{\frac{2n}{n+4}}\left( M,d\widetilde{%
\mu }\right) }^{2}}:\widetilde{g}\in \left[ g\right] \right\} .
\end{eqnarray*}%
In between we have used the fact for smooth function $u$, $Pu\geq 0$ and $u$
not identically zero implies $u>0$.
\end{proof}

To better understand the relation between $Y_{4}\left( g\right) $ and $%
\Theta _{4}\left( g\right) $, we define%
\begin{eqnarray}
Y_{4}^{+}\left( g\right) &=&\inf \left\{ \frac{\int_{M}Pu\cdot ud\mu }{%
\left\Vert u\right\Vert _{L^{\frac{2n}{n-4}}}^{2}}:u\in C^{\infty }\left(
M\right) ,u>0\right\}  \label{eq2.4} \\
&=&\frac{n-4}{2}\inf \left\{ \frac{\int_{M}\widetilde{Q}d\widetilde{\mu }}{%
\left( \widetilde{\mu }\left( M\right) \right) ^{\frac{n-4}{n}}}:\widetilde{g%
}\in \left[ g\right] \right\} .  \notag
\end{eqnarray}%
Clearly we have%
\begin{equation}
Y_{4}\left( g\right) \leq Y_{4}^{+}\left( g\right) .  \label{eq2.5}
\end{equation}

\begin{lemma}
\label{lem2.2}If $n\geq 5$, $Y\left( g\right) >0$, $Q\geq 0$ and not
identically zero, then%
\begin{equation}
Y_{4}^{+}\left( g\right) \Theta _{4}\left( g\right) \leq 1.  \label{eq2.6}
\end{equation}%
Moreover if $Y_{4}^{+}\left( g\right) $ is achieved, then $Y_{4}^{+}\left(
g\right) \Theta _{4}\left( g\right) =1$ and $\Theta _{4}\left( g\right) $
must be achieved too.
\end{lemma}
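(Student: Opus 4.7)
The plan is to exploit the geometric expressions (\ref{eq2.3}) and (\ref{eq2.4}) that rewrite both invariants as extremal quantities over the conformal class $\left[g\right]$, and then to apply H\"older's inequality. For every $\widetilde{g}\in\left[g\right]$, H\"older with conjugate exponents $\frac{2n}{n+4}$ and $\frac{2n}{n-4}$ yields
\begin{equation*}
\int_{M}\widetilde{Q}\,d\widetilde{\mu}\leq\left\Vert \widetilde{Q}\right\Vert _{L^{\frac{2n}{n+4}}\left(M,d\widetilde{\mu}\right)}\widetilde{\mu}\left(M\right)^{\frac{n-4}{2n}}.
\end{equation*}
Squaring and dividing, for every $\widetilde{g}$ with $\int_{M}\widetilde{Q}\,d\widetilde{\mu}>0$ one obtains
\begin{equation*}
\frac{\int_{M}\widetilde{Q}\,d\widetilde{\mu}}{\left\Vert \widetilde{Q}\right\Vert _{L^{\frac{2n}{n+4}}\left(M,d\widetilde{\mu}\right)}^{2}}\leq\frac{\widetilde{\mu}\left(M\right)^{\frac{n-4}{n}}}{\int_{M}\widetilde{Q}\,d\widetilde{\mu}},
\end{equation*}
which is the pointwise bound to be summed by taking $\sup/\inf$.

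For the first assertion, if $Y_{4}^{+}\left(g\right)\leq 0$ then the inequality $Y_{4}^{+}\left(g\right)\Theta_{4}\left(g\right)\leq 1$ is automatic, since $\Theta_{4}\left(g\right)>0$ by the positivity of $G_{P}$. When $Y_{4}^{+}\left(g\right)>0$, (\ref{eq2.4}) forces $\int_{M}\widetilde{Q}\,d\widetilde{\mu}\geq\frac{2Y_{4}^{+}\left(g\right)}{n-4}\widetilde{\mu}\left(M\right)^{\frac{n-4}{n}}>0$ for every $\widetilde{g}\in\left[g\right]$, so the displayed pointwise inequality applies uniformly and improves to $\frac{\int_{M}\widetilde{Q}\,d\widetilde{\mu}}{\left\Vert \widetilde{Q}\right\Vert _{L^{\frac{2n}{n+4}}\left(M,d\widetilde{\mu}\right)}^{2}}\leq\frac{n-4}{2Y_{4}^{+}\left(g\right)}$. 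Taking the supremum in $\widetilde{g}$ and invoking (\ref{eq2.3}) gives $\Theta_{4}\left(g\right)\leq 1/Y_{4}^{+}\left(g\right)$, i.e.\ $Y_{4}^{+}\left(g\right)\Theta_{4}\left(g\right)\leq 1$.

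For the second assertion, suppose $Y_{4}^{+}\left(g\right)$ is achieved by a smooth $\bar{u}>0$ normalized so that $\left\Vert \bar{u}\right\Vert _{L^{\frac{2n}{n-4}}}=1$. The Euler--Lagrange equation reads $P\bar{u}=Y_{4}^{+}\left(g\right)\bar{u}^{\frac{n+4}{n-4}}$. I would first verify $Y_{4}^{+}\left(g\right)>0$: if $Y_{4}^{+}\left(g\right)=0$ then $\bar{u}\in\ker P=0$, contradicting $\bar{u}>0$; if $Y_{4}^{+}\left(g\right)<0$ then $P\bar{u}<0$ everywhere, and combining the representation $\bar{u}=\int_{M}G_{P}\left(\cdot,q\right)P\bar{u}\left(q\right)\,d\mu\left(q\right)$ with $G_{P}>0$ forces $\bar{u}<0$, again a contradiction. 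A direct computation using the normalization then gives $\int_{M}P\bar{u}\cdot\bar{u}\,d\mu=Y_{4}^{+}\left(g\right)$ and $\left\Vert P\bar{u}\right\Vert _{L^{\frac{2n}{n+4}}}^{2}=Y_{4}^{+}\left(g\right)^{2}$, so the test ratio in (\ref{eq2.2}) evaluated at $\bar{u}$ equals $1/Y_{4}^{+}\left(g\right)$. Hence $\Theta_{4}\left(g\right)\geq 1/Y_{4}^{+}\left(g\right)$, and combined with the first part this forces equality $Y_{4}^{+}\left(g\right)\Theta_{4}\left(g\right)=1$ and shows that $\bar{u}$ (equivalently, $f=P\bar{u}$) attains the supremum defining $\Theta_{4}\left(g\right)$.

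No step is analytically deep. The one slightly delicate point is the sign bookkeeping, specifically confirming that $Y_{4}^{+}\left(g\right)>0$ whenever it is achieved --- which is precisely where the positivity of the Green's function $G_{P}$ (together with $\ker P=0$) supplied by Remark \ref{rmk1.1} is essential.
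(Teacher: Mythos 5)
Your proof is correct and follows essentially the same route as the paper: the inequality is Hölder at the conjugate exponents $\tfrac{2n}{n+4}$ and $\tfrac{2n}{n-4}$ (you phrase it in the conformal picture via (\ref{eq2.3}) and (\ref{eq2.4}), the paper applies it directly to $\int_M Pu\cdot u\,d\mu$ for positive smooth $u$, but these are the same statement under $\widetilde g=u^{4/(n-4)}g$), and the equality case is settled by testing $\Theta_4$ with the Euler--Lagrange extremal $\bar u$, using $G_P>0$ to pin down the sign of the Lagrange multiplier. Your sign bookkeeping for $Y_4^+(g)>0$ when achieved is a slightly more spelled-out version of the paper's one-line remark that $\kappa>0$ follows from $G_P>0$.
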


\begin{proof}
It is clear that $\Theta _{4}\left( g\right) >0$. To prove the inequality we
only need to deal with the case $Y_{4}^{+}\left( g\right) >0$. Under this
assumption for $u\in C^{\infty }\left( M\right) ,u>0$, we have $%
\int_{M}Pu\cdot ud\mu >0$. By Holder's inequality we have%
\begin{equation*}
\frac{\left( \int_{M}Pu\cdot ud\mu \right) ^{2}}{\left\Vert u\right\Vert
_{L^{\frac{2n}{n-4}}}^{2}\left\Vert Pu\right\Vert _{L^{\frac{2n}{n+4}}}^{2}}%
\leq 1.
\end{equation*}%
It follows that%
\begin{equation*}
Y_{4}^{+}\left( g\right) \frac{\int_{M}Pu\cdot ud\mu }{\left\Vert
Pu\right\Vert _{L^{\frac{2n}{n+4}}}^{2}}\leq 1.
\end{equation*}%
By the proof of Lemma \ref{lem2.1} we have%
\begin{equation*}
\Theta _{4}\left( g\right) =\sup \left\{ \frac{\int_{M}Pv\cdot vd\mu }{%
\left\Vert Pv\right\Vert _{L^{\frac{2n}{n+4}}}^{2}}:v\in C^{\infty }\left(
M\right) ,v>0\right\} ,
\end{equation*}%
hence $Y_{4}^{+}\left( g\right) \Theta _{4}\left( g\right) \leq 1$.

If $Y_{4}^{+}\left( g\right) $ is achieved, say at $u\in C^{\infty }\left(
M\right) ,u>0$, then%
\begin{equation*}
Pu=\kappa u^{\frac{n+4}{n-4}}
\end{equation*}%
for some constant $\kappa $. Since $G_{P}>0$, we see that $\kappa >0$. Hence%
\begin{equation*}
\Theta _{4}\left( g\right) \geq \frac{\int_{M}Pu\cdot ud\mu }{\left\Vert
Pu\right\Vert _{L^{\frac{2n}{n+4}}}^{2}}=\frac{1}{\kappa }\left\Vert
u\right\Vert _{L^{\frac{2n}{n+4}}}^{-\frac{8}{n-4}}=\frac{1}{Y_{4}^{+}\left(
g\right) }\geq \Theta _{4}\left( g\right) .
\end{equation*}%
Hence all the inequalities are equalities. $\Theta _{4}\left( g\right) =%
\frac{1}{Y_{4}^{+}\left( g\right) }$ and it is achieved at $u$ too.
\end{proof}

\begin{remark}
\label{rmk2.1}Assume $Y_{4}^{+}\left( g\right) \Theta _{4}\left( g\right) =1$%
. Later we will show that $\Theta _{4}\left( g\right) $ is always achieved
by positive smooth functions i.e.%
\begin{equation*}
\Theta _{4}\left( g\right) =\frac{\int_{M}G_{P}f\cdot fd\mu }{\left\Vert
f\right\Vert _{L^{\frac{2n}{n+4}}}^{2}}=\frac{\int_{M}Pv\cdot vd\mu }{%
\left\Vert Pv\right\Vert _{L^{\frac{2n}{n+4}}}^{2}},
\end{equation*}%
here $f\in C^{\infty }\left( M\right) ,f>0$, $v=G_{P}f$. Hence $v\in
C^{\infty }\left( M\right) ,v>0$ and%
\begin{equation*}
Pv=\kappa v^{\frac{n+4}{n-4}}
\end{equation*}%
for some constant $\kappa $. Using $G_{P}>0$ we see that $\kappa >0$. On the
other hand%
\begin{equation*}
\Theta _{4}\left( g\right) =\frac{\int_{M}Pv\cdot vd\mu }{\left\Vert
Pv\right\Vert _{L^{\frac{2n}{n+4}}}^{2}}=\kappa ^{-1}\left\Vert v\right\Vert
_{L^{\frac{2n}{n-4}}}^{-\frac{8}{n-4}}.
\end{equation*}%
Hence%
\begin{equation*}
Y_{4}^{+}\left( g\right) =\kappa \left\Vert v\right\Vert _{L^{\frac{2n}{n-4}%
}}^{\frac{8}{n-4}}=\frac{\int_{M}Pv\cdot vd\mu }{\left\Vert v\right\Vert
_{L^{\frac{2n}{n-4}}}^{2}}.
\end{equation*}%
In other words, positive maximizers for $\Theta _{4}\left( g\right) $ are
also minimizers for $Y_{4}^{+}\left( g\right) $.
\end{remark}

\subsection{The sphere $S^{n}$\label{sec2.2}}

On $S^{n}$ ($n\geq 5$) with standard metric we have%
\begin{equation}
Q=\frac{n\left( n+2\right) \left( n-2\right) }{8}  \label{eq2.7}
\end{equation}%
and%
\begin{equation}
Pu=\Delta ^{2}u-\frac{n^{2}-2n-4}{2}\Delta u+\frac{n\left( n+2\right) \left(
n-2\right) \left( n-4\right) }{16}u.  \label{eq2.8}
\end{equation}%
Let $N$ be the north pole and $\pi _{N}:S^{n}\backslash \left\{ N\right\}
\rightarrow \mathbb{R}^{n}$ be the stereographic projection. Using $x=\pi
_{N}$ as the coordinate, then the Green's function of $P$ with pole at $N$
is given by%
\begin{equation}
G_{P,N}=\frac{1}{n\left( n-2\right) \left( n-4\right) 2^{n-3}\omega _{n}}%
\left( \left\vert x\right\vert ^{2}+1\right) ^{\frac{n-4}{2}}.  \label{eq2.9}
\end{equation}%
Here $\omega _{n}$ is the volume of the unit ball in $\mathbb{R}^{n}$ i.e.%
\begin{equation}
\omega _{n}=\frac{\pi ^{\frac{n}{2}}}{\Gamma \left( \frac{n}{2}+1\right) },
\label{eq2.10}
\end{equation}%
$\Gamma $ is the Gamma function given by%
\begin{equation}
\Gamma \left( \alpha \right) =\int_{0}^{\infty }e^{-t}t^{\alpha -1}dt\quad 
\text{for }\alpha >0.  \label{eq2.11}
\end{equation}%
From \cite{CnLO,Li} we know%
\begin{eqnarray}
Y_{4}\left( S^{n}\right) &=&\inf_{u\in C_{c}^{\infty }\left( \mathbb{R}%
^{n}\right) \backslash \left\{ 0\right\} }\frac{\left\Vert \Delta
u\right\Vert _{L^{2}\left( \mathbb{R}^{n}\right) }^{2}}{\left\Vert
u\right\Vert _{L^{\frac{2n}{n-4}}\left( \mathbb{R}^{n}\right) }^{2}}
\label{eq2.12} \\
&=&\frac{\left\Vert \Delta u_{1}\right\Vert _{L^{2}\left( \mathbb{R}%
^{n}\right) }^{2}}{\left\Vert u_{1}\right\Vert _{L^{\frac{2n}{n-4}}\left( 
\mathbb{R}^{n}\right) }^{2}}  \notag \\
&=&\frac{n\left( n+2\right) \left( n-2\right) \left( n-4\right) }{16}\frac{%
2^{\frac{4}{n}}\pi ^{\frac{2\left( n+1\right) }{n}}}{\Gamma \left( \frac{n+1%
}{2}\right) ^{\frac{4}{n}}}  \notag \\
&=&Y_{4}^{+}\left( S^{n}\right) .  \notag
\end{eqnarray}%
Here%
\begin{equation}
u_{1}\left( x\right) =\left( \left\vert x\right\vert ^{2}+1\right) ^{-\frac{%
n-4}{2}}.  \label{eq2.13}
\end{equation}%
For $\lambda >0$, let%
\begin{equation}
u_{\lambda }\left( x\right) =\lambda ^{-\frac{n-4}{2}}u_{1}\left( \frac{x}{%
\lambda }\right) =\left( \frac{\lambda }{\left\vert x\right\vert
^{2}+\lambda ^{2}}\right) ^{\frac{n-4}{2}},  \label{eq2.14}
\end{equation}%
then%
\begin{equation}
\Delta ^{2}u_{\lambda }=n\left( n+2\right) \left( n-2\right) \left(
n-4\right) u_{\lambda }^{\frac{n+4}{n-4}}.  \label{eq2.15}
\end{equation}

On the other hand it follows from \cite{CnLO,Li} that%
\begin{eqnarray}
&&\Theta _{4}\left( S^{n}\right)  \label{eq2.16} \\
&=&\frac{1}{2n\left( n-2\right) \left( n-4\right) \omega _{n}}\sup_{f\in
L^{2}\left( \mathbb{R}^{n}\right) \backslash \left\{ 0\right\} }\frac{\int_{%
\mathbb{R}^{n}\times \mathbb{R}^{n}}\frac{f\left( x\right) f\left( y\right) 
}{\left\vert x-y\right\vert ^{n-4}}dxdy}{\left\Vert f\right\Vert _{L^{\frac{%
2n}{n+4}}\left( \mathbb{R}^{n}\right) }^{2}}  \notag \\
&=&\sup_{u\in C_{c}^{\infty }\left( \mathbb{R}^{n}\right) \backslash \left\{
0\right\} }\frac{\int_{\mathbb{R}^{n}}\left( \Delta u\right) ^{2}dx}{%
\left\Vert \Delta ^{2}u\right\Vert _{L^{\frac{2n}{n+4}}\left( \mathbb{R}%
^{n}\right) }^{2}}  \notag \\
&=&\frac{1}{2n\left( n-2\right) \left( n-4\right) \omega _{n}}\frac{\int_{%
\mathbb{R}^{n}\times \mathbb{R}^{n}}\frac{f_{1}\left( x\right) f_{1}\left(
y\right) }{\left\vert x-y\right\vert ^{n-4}}dxdy}{\left\Vert
f_{1}\right\Vert _{L^{\frac{2n}{n+4}}\left( \mathbb{R}^{n}\right) }^{2}} 
\notag \\
&=&\frac{1}{Y_{4}\left( S^{n}\right) }.  \notag
\end{eqnarray}%
Here%
\begin{equation}
f_{1}\left( x\right) =\left( \left\vert x\right\vert ^{2}+1\right) ^{-\frac{%
n+4}{2}}.  \label{eq2.17}
\end{equation}%
For $\lambda >0$, let%
\begin{equation}
f_{\lambda }\left( x\right) =\lambda ^{-\frac{n+4}{2}}f_{1}\left( \frac{x}{%
\lambda }\right) =\left( \frac{\lambda }{\left\vert x\right\vert
^{2}+\lambda ^{2}}\right) ^{\frac{n+4}{2}},  \label{eq2.18}
\end{equation}%
then%
\begin{equation}
\Delta ^{2}u_{\lambda }=n\left( n+2\right) \left( n-2\right) \left(
n-4\right) f_{\lambda }.  \label{eq2.19}
\end{equation}

\subsection{Concentration compactness principle\label{sec2.3}}

Here we apply the concentration compactness principle in \cite{Ln} to
extremal problem (\ref{eq2.2}). To achieve this goal we start with an almost
sharp Sobolev inequality. Recall by (\ref{eq2.16}) for $u\in C_{c}^{\infty
}\left( \mathbb{R}^{n}\right) ,$%
\begin{equation}
\int_{\mathbb{R}^{n}}\left( \Delta u\right) ^{2}dx\leq \Theta _{4}\left(
S^{n}\right) \left\Vert \Delta ^{2}u\right\Vert _{L^{\frac{2n}{n+4}}\left( 
\mathbb{R}^{n}\right) }^{2}.  \label{eq2.20}
\end{equation}

\begin{lemma}
\label{lem2.3}Assume $M$ is a smooth compact Riemannian manifold with
dimension $n\geq 5$. Then for any $\varepsilon >0$, we have%
\begin{equation}
\left\Vert \Delta u\right\Vert _{L^{2}\left( M\right) }^{2}\leq \left(
\Theta _{4}\left( S^{n}\right) +\varepsilon \right) \left\Vert Pu\right\Vert
_{L^{\frac{2n}{n+4}}\left( M\right) }^{2}+C\left( \varepsilon \right)
\left\Vert u\right\Vert _{L^{\frac{2n}{n+4}}\left( M\right) }^{2}
\label{eq2.21}
\end{equation}%
for all $u\in W^{4,\frac{2n}{n+4}}\left( M\right) $.
\end{lemma}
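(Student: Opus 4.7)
The plan is to deduce the estimate from the sharp Euclidean inequality \eqref{eq2.20}, the almost-sharp $L^{q}$ Sobolev embedding of $W^{2,2}(M)$, and $L^{p}$ elliptic regularity for the fourth-order elliptic operator $P$. Throughout, set $p = \frac{2n}{n+4}$ and let $q = \frac{2n}{n-4}$ be its conjugate Lebesgue exponent, so $\frac{1}{p}+\frac{1}{q}=1$.

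The first step is the Aubin-type almost-sharp Sobolev embedding on $M$: for every $\delta>0$,
$$\|u\|_{L^{q}(M)}^{2} \leq (\Theta_{4}(S^{n})+\delta)\,\|\Delta u\|_{L^{2}(M)}^{2} + C(\delta)\,\|u\|_{L^{2}(M)}^{2}.$$
This is obtained by taking a partition of unity $\{\chi_{k}^{2}\}$ subordinate to a cover of $M$ by small normal coordinate balls $B_{r}(p_{k})$, applying the sharp Euclidean Sobolev embedding $\|v\|_{L^{q}(\mathbb{R}^{n})}^{2} \leq \Theta_{4}(S^{n})\|\Delta v\|_{L^{2}(\mathbb{R}^{n})}^{2}$ (equivalent to \eqref{eq2.20} via Euclidean duality) to $v=\chi_{k}u$ in each chart, using the $O(r^{2})$ normal coordinate expansion of $g$ to pass between Euclidean and Riemannian quantities, summing via Minkowski's inequality in $L^{q/2}$ to get $\|u\|_{L^{q}(M)}^{2} = \bigl\|\sum_{k}(\chi_{k}u)^{2}\bigr\|_{L^{q/2}} \leq \sum_{k}\|\chi_{k}u\|_{L^{q}}^{2}$ with \emph{no} overlap multiplicity constant, controlling the commutators $[\Delta_{g},\chi_{k}]$ by $\|u\|_{H^{1}}^{2}$, and interpolating the latter into $\|\Delta u\|_{L^{2}}^{2}+\|u\|_{L^{2}}^{2}$.

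The second step couples the inequality above with integration by parts on the closed manifold and H\"older's inequality:
$$\|\Delta u\|_{L^{2}}^{2} = \int_{M} u\,\Delta_{g}^{2}u\,d\mu \leq \|u\|_{L^{q}}\,\|\Delta_{g}^{2}u\|_{L^{p}}.$$
Writing $A=\|\Delta u\|_{L^{2}}^{2}$, $B=\|\Delta_{g}^{2}u\|_{L^{p}}^{2}$, $D=\|u\|_{L^{2}}^{2}$, the combination yields a quadratic inequality $A^{2} \leq \bigl((\Theta_{4}(S^{n})+\delta)A + C(\delta)D\bigr)B$, which solves to $A \leq (\Theta_{4}(S^{n})+\delta')B + C(\delta')D$ with $\delta'\to 0$ as $\delta\to 0$. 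Next, since $P-\Delta_{g}^{2}$ is a differential operator of order at most three with smooth coefficients, for arbitrary $\eta>0$
$$\|\Delta_{g}^{2}u\|_{L^{p}}^{2} \leq (1+\eta)\,\|Pu\|_{L^{p}}^{2} + C(\eta)\,\|u\|_{W^{3,p}(M)}^{2},$$
and the $W^{3,p}$ term is absorbed by Ehrling's interpolation $\|u\|_{W^{3,p}}^{2} \leq \sigma\|u\|_{W^{4,p}}^{2} + C(\sigma)\|u\|_{L^{p}}^{2}$ together with the $L^{p}$ elliptic estimate $\|u\|_{W^{4,p}} \leq C(\|Pu\|_{L^{p}} + \|u\|_{L^{p}})$ for the elliptic operator $P$. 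The residual $\|u\|_{L^{2}}^{2}$ from the quadratic step is handled by the interpolation $\|u\|_{L^{2}}^{2} \leq \|u\|_{L^{q}}\|u\|_{L^{p}}$ (valid since $\frac{1}{p}+\frac{1}{q}=1$), together with the first-step bound and Young's inequality, yielding $\|u\|_{L^{2}}^{2} \leq \tau\|\Delta u\|_{L^{2}}^{2} + C(\tau)\|u\|_{L^{p}}^{2}$ for any $\tau>0$. Choosing $\delta,\eta,\sigma,\tau$ small in the correct order produces the stated bound.

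The main obstacle is the first step, namely preserving the sharp Euclidean constant $\Theta_{4}(S^{n})$ under localization on a curved manifold. The key device is the partition of unity with $\sum_{k}\chi_{k}^{2}=1$: Minkowski's inequality in $L^{q/2}$ (legitimate because $q/2\geq 1$ when $n\geq 4$) performs the summation without introducing any overlap multiplicity, so that the Euclidean optimum is recovered up to the small $\delta$ coming solely from the normal coordinate expansion. Once the first step is in hand, all remaining estimates are routine duality, interpolation, and absorption.
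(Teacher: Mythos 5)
Your argument is correct and reconstructs the standard derivation that the paper cites to \cite{DHL,He} without spelling out. The localization of the sharp Euclidean $H^2\hookrightarrow L^{q}$ inequality via a partition of unity with $\sum_k\chi_k^2=1$ and Minkowski's inequality in $L^{q/2}$, followed by the duality $\|\Delta u\|_{L^2}^2=\int_M u\,\Delta^2 u\,d\mu$, the quadratic absorption, and the elliptic $L^p$ theory for $P$, is precisely the fourth-order Aubin-type argument found in those references.
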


The passage from (\ref{eq2.20}) to (\ref{eq2.21}) is standard and we refer
the readers to \cite{DHL,He} for further details. The above almost sharp
Sobolev inequality can be used to prove the following concentration
compactness lemma. We refer the readers to \cite{He,Ln} for the now standard
argument.

\begin{lemma}
\label{lem2.4}Let $M$ be a smooth compact Riemannian manifold with dimension 
$n\geq 5$, $\ker P=0$, $f_{i}\in L^{\frac{2n}{n+4}}\left( M\right) $ such
that $f_{i}\rightharpoonup f$ weakly in $L^{\frac{2n}{n+4}}$. Let $%
u_{i},u\in W^{4,\frac{2n}{n+4}}\left( M\right) $ such that $%
Pu_{i}=f_{i},Pu=f $. Assume%
\begin{equation*}
\left\vert f_{i}\right\vert ^{\frac{2n}{n+4}}d\mu \rightharpoonup \sigma 
\text{ in }\mathcal{M}\left( M\right)
\end{equation*}%
and%
\begin{equation*}
\left\vert \Delta u_{i}\right\vert ^{2}d\mu \rightharpoonup \nu \text{ in }%
\mathcal{M}\left( M\right) ,
\end{equation*}%
here $\mathcal{M}\left( M\right) $ is the space of all Radon measures on $M$%
. Then there exists countably many points $p_{i}\in M$ such that%
\begin{equation*}
\sigma \geq \left\vert f\right\vert ^{\frac{2n}{n+4}}d\mu +\sum_{i}\sigma
_{i}\delta _{p_{i}}
\end{equation*}%
and%
\begin{equation*}
\nu =\left\vert \Delta u\right\vert ^{2}d\mu +\sum_{i}\nu _{i}\delta
_{p_{i}},
\end{equation*}%
here $\sigma _{i}=\sigma \left( \left\{ p_{i}\right\} \right) ,\nu _{i}=\nu
\left( \left\{ p_{i}\right\} \right) $. Moreover%
\begin{equation*}
\nu _{i}\leq \Theta _{4}\left( S^{n}\right) \sigma _{i}^{\frac{n+4}{n}}.
\end{equation*}
\end{lemma}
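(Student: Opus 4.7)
The approach I would take mirrors P.-L.\ Lions' classical concentration compactness scheme, with the almost sharp Sobolev inequality (Lemma~\ref{lem2.3}) replacing the usual one. The first step is to pass to a defect sequence. Since $\ker P=0$, elliptic regularity gives that $u_i$ is bounded in $W^{4,\frac{2n}{n+4}}(M)$, and by Rellich--Kondrachov, $v_i:=u_i-u$ converges to $0$ strongly in $W^{3,\frac{2n}{n+4}}$. The Sobolev embedding $W^{2,\frac{2n}{n+4}}\hookrightarrow L^2$ puts $\Delta u_i$ in a bounded subset of $L^2$, so after passing to a subsequence $\Delta v_i\rightharpoonup 0$ weakly in $L^2$. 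Expanding
\[
|\Delta u_i|^2=|\Delta u|^2+2\Delta u\cdot\Delta v_i+|\Delta v_i|^2
\]
and testing against continuous functions shows $|\Delta v_i|^2 d\mu\rightharpoonup\tilde\nu:=\nu-|\Delta u|^2 d\mu$, which is a nonnegative Radon measure. A further subsequence gives $|f_i-f|^{\frac{2n}{n+4}}d\mu\rightharpoonup\tilde\sigma\ge 0$.

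Next I would apply Lemma~\ref{lem2.3} to the localized sequence $\eta v_i$ for an arbitrary $\eta\in C^\infty(M)$:
\[
\|\Delta(\eta v_i)\|_{L^2}^2\le(\Theta_4(S^n)+\varepsilon)\|P(\eta v_i)\|_{L^{\frac{2n}{n+4}}}^2+C(\varepsilon)\|\eta v_i\|_{L^{\frac{2n}{n+4}}}^2.
\]
The last term is $o(1)$ by the compact embedding $W^{4,\frac{2n}{n+4}}\hookrightarrow L^{\frac{2n}{n+4}}$. The commutator $\Delta(\eta v_i)-\eta\Delta v_i$ involves only $v_i$ and $\nabla v_i$, while $P(\eta v_i)-\eta g_i$ (with $g_i:=f_i-f=Pv_i$) involves derivatives of $v_i$ of order at most three; all such correction terms tend to zero in their respective $L^p$ norms by Rellich. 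Passing to the limit $i\to\infty$ and then letting $\varepsilon\to 0$ yields the reverse H\"older inequality
\[
\int\eta^2\,d\tilde\nu\le\Theta_4(S^n)\left(\int|\eta|^{\frac{2n}{n+4}}\,d\tilde\sigma\right)^{\frac{n+4}{n}}.
\]

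From this inequality the standard atomization argument runs as follows: taking $\eta=\eta_\delta$ a smooth cutoff concentrating at a point $p\in M$ and letting $\delta\to 0$ gives $\tilde\nu(\{p\})\le\Theta_4(S^n)\tilde\sigma(\{p\})^{\frac{n+4}{n}}$, and in particular $\tilde\nu$ vanishes on any set of zero $\tilde\sigma$-mass. A Lebesgue differentiation argument rules out any absolutely continuous part of $\tilde\nu$ relative to the non-atomic portion of $\tilde\sigma$, so $\tilde\nu=\sum_j\nu_j\delta_{p_j}$ is purely atomic with $\nu_j\le\Theta_4(S^n)\tilde\sigma(\{p_j\})^{\frac{n+4}{n}}$.

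Finally, to identify the atoms of $\tilde\sigma$ with those of $\sigma$, I would use $\bigl||a+b|^p-|a|^p\bigr|\le p(|a|+|b|)^{p-1}|b|$ with $a=g_i$, $b=f$ together with H\"older's inequality to obtain
\[
\left|\int\eta^{\frac{2n}{n+4}}d\sigma-\int\eta^{\frac{2n}{n+4}}d\tilde\sigma\right|\le C\|\eta f\|_{L^{\frac{2n}{n+4}}};
\]
concentrating $\eta$ at $p_j$ then gives $\sigma(\{p_j\})=\tilde\sigma(\{p_j\})=:\sigma_j$. Combined with the lower semicontinuity bound $\sigma\ge|f|^{\frac{2n}{n+4}}d\mu$ and the mutual singularity of the atomic and absolutely continuous parts, this yields $\sigma\ge|f|^{\frac{2n}{n+4}}d\mu+\sum_j\sigma_j\delta_{p_j}$ with $\nu_j\le\Theta_4(S^n)\sigma_j^{\frac{n+4}{n}}$. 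The main technical obstacle is the bookkeeping for the commutators in the middle step so that all lower-order corrections are indeed $o(1)$; beyond that the argument is a direct adaptation of Lions' framework and requires no genuinely new ideas.
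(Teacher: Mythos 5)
Your proof is correct and is exactly the ``now standard argument'' that the paper invokes by citing Hebey and Lions without supplying details: split off the strong limit $u$, apply the almost-sharp Sobolev inequality of Lemma~\ref{lem2.3} to the localized defect sequence $\eta v_i$, absorb the commutator terms by the strong $W^{3,\frac{2n}{n+4}}$ convergence, and read off the reverse H\"older inequality whose exponent gap forces $\tilde\nu$ to be atomic with the stated bound. The final comparison of the atoms of $\tilde\sigma$ and $\sigma$ (via the Brezis--Lieb type inequality and H\"older), together with the mutual singularity of $|f|^{2n/(n+4)}d\mu$ and the atomic part, closes the argument; no step is missing.
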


Now we are ready to derive a criterion for the existence of maximizers. Such
kind of criterion is an analog statement for those of Yamabe problems (\cite%
{LP}).

\begin{proposition}
\label{prop2.1}Assume $\left( M,g\right) $ is a smooth compact $n$
dimensional Riemannian manifold with $n\geq 5$, $\ker P=0$. Let%
\begin{equation*}
\Theta _{4}\left( g\right) =\sup_{f\in L^{\frac{2n}{n+4}}\left( M\right)
\backslash \left\{ 0\right\} }\frac{\int_{M}G_{P}f\cdot fd\mu }{\left\Vert
f\right\Vert _{L^{\frac{2n}{n+4}}}^{2}}.
\end{equation*}%
If $\Theta _{4}\left( g\right) >\Theta _{4}\left( S^{n}\right) $ and $%
f_{i}\in L^{\frac{2n}{n+4}}$ satisfies $\left\Vert f_{i}\right\Vert _{L^{%
\frac{2n}{n+4}}}=1,\int_{M}G_{P}f_{i}\cdot f_{i}d\mu \rightarrow \Theta
_{4}\left( g\right) $, then after passing to a subsequence, we can find a $%
f\in L^{\frac{2n}{n+4}}$ such that $f_{i}\rightarrow f$ in $L^{\frac{2n}{n+4}%
}$. In particular, $\left\Vert f\right\Vert _{L^{\frac{2n}{n+4}}}=1$ and $%
\int_{M}G_{P}f\cdot fd\mu =\Theta _{4}\left( g\right) $, $f$ is a maximizer
for $\Theta _{4}\left( g\right) $.
\end{proposition}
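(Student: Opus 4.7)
The plan is to apply the concentration-compactness result of Lemma \ref{lem2.4} to the maximizing sequence $\{f_i\}$ and exploit the strict inequality $\Theta_{4}(g) > \Theta_{4}(S^{n})$ to rule out any concentration. After passing to a subsequence, the boundedness $\|f_i\|_{L^{2n/(n+4)}} = 1$ yields $f_i \rightharpoonup f$ in $L^{2n/(n+4)}(M)$. Setting $u_i := G_P f_i$ and $u := G_P f$, elliptic $L^p$-theory for the invertible operator $P$ (which is invertible since $\ker P = 0$) gives $\{u_i\}$ bounded in $W^{4, 2n/(n+4)}(M)$. By Rellich this embeds compactly into $W^{1,2}(M)$ and into $L^q(M)$ for every $q < 2n/(n-4)$, so $u_i \to u$ strongly in those spaces while $\Delta u_i \rightharpoonup \Delta u$ weakly in $L^{2}(M)$.

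Applying Lemma \ref{lem2.4} (along a further subsequence) produces Radon measures $\sigma, \nu$ on $M$ and at most countably many points $\{p_j\}$ satisfying $\sigma \geq |f|^{2n/(n+4)} d\mu + \sum_j \sigma_j \delta_{p_j}$, $\nu = |\Delta u|^2 d\mu + \sum_j \nu_j \delta_{p_j}$, and the crucial atomic bound $\nu_j \leq \Theta_{4}(S^{n}) \sigma_j^{(n+4)/n}$. Decomposing the quadratic form as $E(v) = \int_M (\Delta v)^2 d\mu + R(v)$ with $R(v) := \int_M \bigl(-4 A(\nabla v, \nabla v) + (n-2) J |\nabla v|^2 + \tfrac{n-4}{2} Q v^2\bigr) d\mu$, the compact embeddings give $R(u_i) \to R(u)$, while $\int_M (\Delta u_i)^2 d\mu \to \nu(M) = \int_M (\Delta u)^2 d\mu + \sum_j \nu_j$. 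Since $\int_M G_P f_i \cdot f_i d\mu = E(u_i) \to \Theta_{4}(g)$, I obtain the identity $\Theta_{4}(g) = E(u) + \sum_j \nu_j$.

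The main obstacle is the mass bookkeeping that rules out both concentration (some $\sigma_j > 0$) and vanishing ($f \equiv 0$). Set $a := \|f\|_{L^{2n/(n+4)}}^{2n/(n+4)} \in [0,1]$. Compactness of $M$ gives $\sigma(M) = \lim_i \int_M |f_i|^{2n/(n+4)} d\mu = 1$, hence $a + \sum_j \sigma_j \leq 1$. Combining $(n+4)/n > 1$ with the elementary inequality $\sum_j x_j^{(n+4)/n} \leq (\sum_j x_j)^{(n+4)/n}$ for nonnegative summands yields $\sum_j \nu_j \leq \Theta_{4}(S^{n})(1-a)^{(n+4)/n}$, while $E(u) = \int_M G_P f \cdot f d\mu \leq \Theta_{4}(g) a^{(n+4)/n}$ by the definition of $\Theta_{4}(g)$. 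Plugging both estimates into the identity above gives $\Theta_{4}(g) \leq \Theta_{4}(g) a^{(n+4)/n} + \Theta_{4}(S^{n})(1-a)^{(n+4)/n}$. Using the elementary estimate $t^{(n+4)/n} + (1-t)^{(n+4)/n} \leq 1$ for $t \in [0,1]$ together with the strict hypothesis $\Theta_{4}(S^{n}) < \Theta_{4}(g)$ forces $a = 1$: if $0 < a < 1$ the bound reads $\Theta_{4}(g) < \Theta_{4}(g)$, while $a = 0$ yields $\Theta_{4}(g) \leq \Theta_{4}(S^{n})$, both absurd. Thus $\|f\|_{L^{2n/(n+4)}} = 1$, and weak convergence combined with convergence of norms in the uniformly convex space $L^{2n/(n+4)}(M)$ upgrades $f_i \rightharpoonup f$ to $f_i \to f$ strongly; passing to the limit in the Rayleigh quotient shows $f$ is the desired maximizer.
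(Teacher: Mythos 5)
Your argument is correct and follows essentially the same route as the paper's: extract weak limits, apply Lemma \ref{lem2.4} to the pair $(f_i, u_i)$, pass to the limit in $E(u_i)$ using the compact embedding $W^{4,\frac{2n}{n+4}}\hookrightarrow W^{1,2}$ to get $\Theta_4(g)=E(u)+\sum_j\nu_j$, and then use the atomic bound plus the strict inequality $\Theta_4(g)>\Theta_4(S^n)$ to force all concentration masses to vanish and $\|f\|_{L^{2n/(n+4)}}=1$. The only cosmetic difference is in the final bookkeeping: you set $a=\|f\|_{L^{2n/(n+4)}}^{2n/(n+4)}$ and invoke $a^{\frac{n+4}{n}}+(1-a)^{\frac{n+4}{n}}\le1$ explicitly, whereas the paper nests the same superadditivity inside a single chain of inequalities and then reads off that each link must be an equality; both arrive at the same conclusion for the same reasons.
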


\begin{proof}
After passing to a subsequence we can assume $f_{i}\rightharpoonup f$ weakly
in $L^{\frac{2n}{n+4}}$. Let $u_{i},u\in W^{4,\frac{2n}{n+4}}$ such that $%
Pu_{i}=f_{i},Pu=f$. Then $u_{i}\rightharpoonup u$ weakly in $W^{4,\frac{2n}{%
n+4}}$, $u_{i}\rightarrow u$ in $W^{3,\frac{2n}{n+4}}$ and $u_{i}\rightarrow
u$ in $W^{1,2}$. After passing to another subsequence we have%
\begin{equation*}
\left\vert f_{i}\right\vert ^{\frac{2n}{n+4}}d\mu \rightharpoonup d\sigma 
\text{ and }\left( \Delta u_{i}\right) ^{2}d\mu \rightharpoonup d\nu \text{
in }\mathcal{M}\left( M\right) ,
\end{equation*}%
moreover it follows from Lemma \ref{lem2.4} that%
\begin{equation*}
\sigma \geq \left\vert f\right\vert ^{\frac{2n}{n+4}}d\mu +\sum_{i}\sigma
_{i}\delta _{p_{i}},\quad \nu =\left( \Delta u\right) ^{2}d\mu +\sum_{i}\nu
_{i}\delta _{p_{i}},
\end{equation*}%
here $\sigma _{i}=\sigma \left( \left\{ p_{i}\right\} \right) ,\nu _{i}=\nu
\left( \left\{ p_{i}\right\} \right) $ and%
\begin{equation*}
\nu _{i}\leq \Theta _{4}\left( S^{n}\right) \sigma _{i}^{\frac{n+4}{n}}.
\end{equation*}%
It follows that $\sigma \left( M\right) =1$ and%
\begin{eqnarray*}
&&\int_{M}G_{P}f_{i}\cdot f_{i}d\mu \\
&=&\int_{M}u_{i}Pu_{i}d\mu =E\left( u_{i}\right) \\
&=&\int_{M}\left( \left( \Delta u_{i}\right) ^{2}-4A\left( \nabla
u_{i},\nabla u_{i}\right) +\left( n-2\right) J\left\vert \nabla
u_{i}\right\vert ^{2}+\frac{n-4}{2}Qu_{i}^{2}\right) d\mu \\
&\rightarrow &E\left( u\right) +\sum_{i}\nu _{i}.
\end{eqnarray*}%
Hence%
\begin{eqnarray*}
\Theta _{4}\left( g\right) &=&E\left( u\right) +\sum_{i}\nu _{i} \\
&\leq &\Theta _{4}\left( g\right) \left\Vert f\right\Vert _{L^{\frac{2n}{n+4}%
}}^{2}+\Theta _{4}\left( S^{n}\right) \sum_{i}\sigma _{i}^{\frac{n+4}{n}} \\
&\leq &\Theta _{4}\left( g\right) \left[ \left( \left\Vert f\right\Vert _{L^{%
\frac{2n}{n+4}}}^{\frac{2n}{n+4}}\right) ^{\frac{n+4}{n}}+\sum_{i}\sigma
_{i}^{\frac{n+4}{n}}\right] \\
&\leq &\Theta _{4}\left( g\right) \left( \left\Vert f\right\Vert _{L^{\frac{%
2n}{n+4}}}^{\frac{2n}{n+4}}+\sum_{i}\sigma _{i}\right) ^{\frac{n+4}{n}} \\
&\leq &\Theta _{4}\left( g\right) .
\end{eqnarray*}%
Hence all inequalities become equalities. In particular, $\sigma _{i}=0$, $%
\nu _{i}=0$, $\left\Vert f\right\Vert _{L^{\frac{2n}{n+4}}}=1$. Hence $%
f_{i}\rightarrow f$ in $L^{\frac{2n}{n+4}}$, $E\left( u\right)
=\int_{M}G_{P}f\cdot fd\mu =\Theta _{4}\left( g\right) $.
\end{proof}

\subsection{Expansion of Green's function of the Paneitz operator\label%
{sec2.4}}

In \cite{LP}, the expansion formula of Green's function of conformal
Laplacian operator plays an important role. Here we determine the expansion
formulas for Green's function of Paneitz operator. These formulas will be
crucial in the choice of test function in section \ref{sec3}.

We use the same strategy as \cite[section 6]{LP}, but since we need to take
into account lower order terms, some efforts are needed in doing the
algebra. Let us introduce some notation. For $m\in \mathbb{Z}_{+}$, let 
\begin{equation}
\mathcal{P}_{m}=\left\{ \text{homogeneous degree }m\text{ polynomials on }%
\mathbb{R}^{n}\right\} ,  \label{eq2.22}
\end{equation}%
and%
\begin{equation}
\mathcal{H}_{m}=\left\{ \text{harmonic degree }m\text{ homogeneous
polynomials}\right\} .  \label{eq2.23}
\end{equation}

Let $f$ be a function\ defined on a neighborhood of $0$ except at $0$,
namely $U\backslash \left\{ 0\right\} ,$ $m$ be nonnegative integer, and $%
\theta \in \mathbb{R}$. Then we write $f=O^{\left( m\right) }\left(
r^{\theta }\right) $ as $r\rightarrow 0$ if 
\begin{equation}
f\in C^{m}\left( U\backslash \left\{ 0\right\} \right) \text{ and }\partial
_{i_{1}\cdots i_{k}}f\left( x\right) =O\left( r^{\theta -k}\right) \text{ as 
}r\rightarrow 0  \label{eq2.24}
\end{equation}%
for $k=0,1,\cdots ,m$. Here $r=\left\vert x\right\vert $.

Another useful notation is as follows. Let $f$ be a function defined on a
neighborhood of $0$, namely $U$, $m$ and $k$ be nonnegative integers. Then
we write $f=O_{m}\left( r^{k}\right) $ if $f\in C^{m}\left( U\right) $ and $%
f\left( x\right) =O\left( r^{k}\right) $ as $r\rightarrow 0$. Similarly we
write $f=O_{\infty }\left( r^{k}\right) $ if $f\in C^{\infty }\left(
U\right) $ and $f\left( x\right) =O\left( r^{k}\right) $ as $r\rightarrow 0$.

Let $M$ be a smooth compact manifold with a conformal class of Riemannian
metrics. For a point $p\in M$, choose a conformal normal coordinate (see 
\cite{LP}) at $p$, $x_{1},\cdots ,x_{n}$. Let the metric $%
g=g_{ij}dx_{i}dx_{j}$. Then we have%
\begin{eqnarray}
J\left( p\right) &=&0,\quad J_{i}\left( p\right) =0,\quad \Delta J\left(
p\right) =-\frac{\left\vert W\left( p\right) \right\vert ^{2}}{12\left(
n-1\right) },  \label{eq2.25} \\
A_{ij}\left( p\right) &=&0,\quad A_{ijk}\left( p\right) x_{i}x_{j}x_{k}=0,
\label{eq2.26}
\end{eqnarray}%
and%
\begin{equation}
A_{ijkl}\left( p\right) x_{i}x_{j}x_{k}x_{l}=-\frac{2}{9\left( n-2\right) }%
\sum_{kl}\left( W_{ikjl}\left( p\right) x_{i}x_{j}\right) ^{2}-\frac{r^{2}}{%
n-2}J_{ij}\left( p\right) x_{i}x_{j}.  \label{eq2.27}
\end{equation}%
Here $A_{ijk}$ and $A_{ijkl}$ are covariant derivatives of the Schouten
tensor $A$ (see (\ref{eq1.2})).

\begin{proposition}
\label{prop2.2}Assume $n\geq 5$ and ker$P=0$. Then in conformal normal
coordinate at $p$, we have the following statements:

\begin{itemize}
\item If the original conformal class is conformal flat in a neighborhood of 
$p$, then we may choose $g$ such that it is flat near $p$, and 
\begin{equation}
2n\left( 2-n\right) \left( 4-n\right) \omega _{n}G_{P,p}=r^{4-n}+O_{\infty
}\left( 1\right) .  \label{eq2.28}
\end{equation}

\item If $n$ is odd, then 
\begin{equation}
2n\left( 2-n\right) \left( 4-n\right) \omega _{n}G_{P,p}=r^{4-n}\left(
1+\sum_{i=4}^{n}\psi _{i}\right) +O_{4}\left( 1\right) .  \label{eq2.29}
\end{equation}%
Here $\psi _{i}\in \mathcal{P}_{i}$.

\item If $n$ is even and larger than or equal to $8$, then 
\begin{eqnarray}
&&2n\left( 2-n\right) \left( 4-n\right) \omega _{n}G_{P,p}  \label{eq2.30} \\
&=&r^{4-n}\left( 1+\sum_{i=4}^{n}\psi _{i}\right) +r^{4-n}\log
r\sum_{i=n-4}^{n}\psi _{i}^{\prime }+r^{4-n}\log ^{2}r\sum_{i=n-2}^{n}\psi
_{i}^{\prime \prime }  \notag \\
&&+r^{4-n}\log ^{3}r\cdot \psi _{n}^{\prime \prime \prime }+O_{4}\left(
1\right) .  \notag
\end{eqnarray}%
Here $\psi _{i},\psi _{i}^{\prime },\psi _{i}^{\prime \prime },\psi
_{i}^{\prime \prime \prime }\in \mathcal{P}_{i}$.

\item If $n=6$, then 
\begin{eqnarray}
96\omega _{6}G_{P,p} &=&r^{-2}\left( 1+\psi _{4}+\psi _{5}+\psi _{6}\right)
+r^{-2}\log r\left( \psi _{4}^{\prime }+\psi _{5}^{\prime }+\psi
_{6}^{\prime }\right)  \label{eq2.31} \\
&&+r^{-2}\log ^{2}r\cdot \psi _{6}^{\prime \prime }+O_{4}\left( 1\right) . 
\notag
\end{eqnarray}%
Here $\psi _{i},\psi _{i}^{\prime },\psi _{i}^{\prime \prime }\in \mathcal{P}%
_{i}$.
\end{itemize}

As a consequence, we have

\begin{itemize}
\item If $n=5,6,7$ or $M$ is conformal flat near $p$, then 
\begin{equation}
2n\left( 2-n\right) \left( 4-n\right) \omega _{n}G_{P,p}=r^{4-n}+A+O^{\left(
4\right) }\left( r\right) .  \label{eq2.32}
\end{equation}%
Here $A$ is a constant.

\item If $n=8$, then 
\begin{equation}
384\omega _{8}G_{P,p}=r^{-4}-\frac{\left\vert W\left( p\right) \right\vert
^{2}}{1440}\log r+O^{\left( 4\right) }\left( 1\right) .  \label{eq2.33}
\end{equation}

\item If $n\geq 9$, then 
\begin{equation}
2n\left( 2-n\right) \left( 4-n\right) \omega _{n}G_{P,p}=r^{4-n}+r^{4-n}\psi
_{4}+O^{\left( 4\right) }\left( r^{9-n}\right) ,  \label{2.34}
\end{equation}%
here $\psi _{4}\in \mathcal{P}_{4}$ and in fact 
\begin{eqnarray}
&&\psi _{4}  \label{eq2.35} \\
&=&\frac{1}{40\left( n-2\right) }\left[ \frac{2}{9}\sum_{kl}\left(
W_{ikjl}\left( p\right) x_{i}x_{j}\right) ^{2}-\frac{2r^{2}}{9\left(
n+4\right) }\sum_{jkl}\left( W_{ijkl}\left( p\right) x_{i}+W_{ilkj}\left(
p\right) x_{i}\right) ^{2}\right.  \notag \\
&&\left. +\frac{\left\vert W\left( p\right) \right\vert ^{2}}{3\left(
n+2\right) \left( n+4\right) }r^{4}\right] +\frac{r^{2}}{48\left( n-6\right) 
}\left[ \frac{4}{9\left( n+4\right) }\sum_{jkl}\left( W_{ijkl}\left(
p\right) x_{i}+W_{ilkj}\left( p\right) x_{i}\right) ^{2}\right.  \notag \\
&&\left. -2\left( n-6\right) J_{ij}\left( p\right) x_{i}x_{j}-\frac{\left(
n^{2}+6n-32\right) \left\vert W\left( p\right) \right\vert ^{2}}{6n\left(
n+4\right) \left( n-1\right) }r^{2}\right]  \notag \\
&&+r^{4}\cdot \frac{\left( n-4\right) \left( 3n^{2}-2n-64\right) \left\vert
W\left( p\right) \right\vert ^{2}}{576n\left( n+2\right) \left( n-1\right)
\left( n-6\right) \left( n-8\right) }.  \notag
\end{eqnarray}%
The terms in the square brackets are harmonic polynomials.
\end{itemize}
\end{proposition}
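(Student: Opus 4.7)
The plan is to follow the Hadamard-type parametrix construction used in [LP, Section~6] for the conformal Laplacian, adapted to the fourth-order operator $P$. Working in conformal normal coordinates at $p$, decompose $P = \Delta_0^2 + L$, where $\Delta_0$ is the Euclidean biLaplacian in these coordinates and $L$ collects all curvature corrections: the difference $\Delta_g^2 - \Delta_0^2$, the divergence terms involving $A$ and $J$ in (\ref{eq1.3}), and the zeroth-order term $\frac{n-4}{2}Q\varphi$. By (\ref{eq2.25})--(\ref{eq2.27}), the coefficients of $L$ vanish to high order at $p$, which is the reason for working in these coordinates.

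The construction is iterative. Starting from the Euclidean fundamental solution $\Gamma_0 = c_n r^{4-n}$ of $\Delta_0^2$, with $c_n = [2n(2-n)(4-n)\omega_n]^{-1}$, successively add corrections $\Gamma_k = \Gamma_{k-1} + \eta_k$ with $\eta_k = r^{4-n}\psi_k$, $\psi_k \in \mathcal{P}_k$ (possibly multiplied by powers of $\log r$), solving
\begin{equation*}
\Delta_0^2 \eta_k = -\bigl[\text{leading-order singular part of } L\Gamma_{k-1}\bigr].
\end{equation*}
The algebraic engine is the identity $\Delta_0(r^\alpha q) = \alpha(\alpha+2m+n-2) r^{\alpha-2} q$ for $q \in \mathcal{H}_m$, which yields an explicit inverse of $\Delta_0^2$ acting on $r^{4-n+k}\cdot\mathcal{H}_m$ whenever the two resulting denominators are nonzero. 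When a denominator vanishes (resonance), one must introduce a $\log r$ factor; this accounts for the logs in (\ref{eq2.29})--(\ref{eq2.31}). For odd $n$, no resonance occurs below the order at which the remainder is already $O^{(4)}(1)$, giving (\ref{eq2.29}). For even $n\geq 8$, resonances arise at degrees $n-4$, $n-2$ and $n$, producing the $\log r$, $\log^2 r$ and $\log^3 r$ factors in (\ref{eq2.30}); the case $n=6$ is analogous but the iteration collapses to (\ref{eq2.31}). Once enough singular terms are cancelled, elliptic regularity, together with the hypothesis $\ker P=0$ used to solve for the remainder uniquely, supplies the claimed smooth tail.

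The explicit formula for $\psi_4$ in (\ref{eq2.35}) when $n\geq 9$ requires computing $L\Gamma_0$ to leading order. The three contributing sources are the quartic terms in $g_{ij}-\delta_{ij}$ (which, through (\ref{eq2.27}), bring in $\sum_{kl}(W_{ikjl}(p)x_ix_j)^2$ and the $J_{ij}(p)x_ix_j$ term), the $\func{div}(4A(\nabla\cdot,e_i)e_i)$ contribution at second covariant order, and the $(n-2)J\nabla\cdot$ contribution at fourth order in $x$. Decomposing each forcing term into harmonic-polynomial summands (this is where the denominators containing $(n+2)(n+4)$ and $(n-6)(n-8)$ in (\ref{eq2.35}) arise, corresponding to applying $\Delta_0^{-2}$ on homogeneous harmonic data of degrees $2$ and $4$) produces the three grouped brackets in (\ref{eq2.35}).

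The main obstacle will be the bookkeeping for this harmonic decomposition: one must verify that the polynomials inside the square brackets of (\ref{eq2.35}) are genuinely harmonic, since otherwise the inversion recipe fails and further spurious singular terms reappear at the same order. This verification is a somewhat delicate exercise in the algebraic Bianchi identity and the trace-free symmetries of the Weyl tensor, combined with Euler's identity $x_i\partial_i q = m q$ for homogeneous $q$. In contrast, the general-$k$ log-term tracking is essentially automatic once the resonance condition is written down, so the genuine computational effort is concentrated in this explicit fourth-order calculation that leads to (\ref{eq2.35}).
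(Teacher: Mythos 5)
Your proposal follows essentially the same parametrix iteration the paper uses: starting from $r^{4-n}$, inverting $\Delta_0^2$ on the pieces $r^{2k}\mathcal{H}_{m-2k}$ degree by degree, inserting $\log r$ powers at resonances, and invoking $\ker P=0$ to absorb the smooth remainder, with the paper merely packaging the bookkeeping via the operators $A_\alpha$, $B_\alpha$, $K_\alpha$, $K_\alpha^{(1)}$, $K^{(2)}$ of Section~\ref{sec2.4}. One small inaccuracy in your sketch of $\psi_4$: the $(n+2)(n+4)$ denominators in (\ref{eq2.35}) come from the harmonic decomposition (\ref{eq3.6}) of $\sum_{kl}(W_{ikjl}(p)x_ix_j)^2$, not from inverting $\Delta_0^2$ (whose eigenvalues on $r^{2k}\mathcal{H}_{4-2k}$, $k=0,1,2$, supply only the factors $(n-2)(n-4)$, $(n-4)(n-6)$, $(n-6)(n-8)$), and among the sources feeding into $\phi_4$ you should also include the zeroth-order term $\frac{n-4}{2}Q\,r^{4-n}$, which is nontrivial because $Q(p)=\left\vert W(p)\right\vert^{2}/(12(n-1))$ and produces the $\frac{(n-4)\left\vert W(p)\right\vert^{2}}{24(n-1)}r^4$ piece of (\ref{eq2.61}).
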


To derive these expansions, we need some algebraic preparations. Note that $%
\mathcal{P}_{m}$ has the following decomposition (see \cite{S})%
\begin{equation}
\mathcal{P}_{m}=\bigoplus_{k=0}^{\left[ \frac{m}{2}\right] }\left( r^{2k}%
\mathcal{H}_{m-2k}\right) .  \label{eq2.36}
\end{equation}%
Under this decomposition, we have 
\begin{equation}
\left. \left( r^{2}\Delta \right) \right\vert _{r^{2k}\mathcal{H}%
_{m-2k}}=2k\left( 2m-2k+n-2\right) \quad \text{for }k=0,1,2,\cdots ,\left[ 
\frac{m}{2}\right] .  \label{eq2.37}
\end{equation}%
Here $\Delta $ denotes the Laplace operator with respect to the Euclidean
metric.

For $\alpha \in \mathbb{R}$, let 
\begin{equation}
A_{\alpha }=r^{2}\Delta +2\alpha r\partial _{r}+\alpha \left( \alpha
+n-2\right) ,  \label{eq2.38}
\end{equation}%
and%
\begin{equation}
B_{\alpha }=\frac{\partial }{\partial \alpha }A_{\alpha }=2r\partial
_{r}+\left( 2\alpha +n-2\right) ,  \label{eq2.39}
\end{equation}%
then 
\begin{eqnarray*}
\Delta \left( r^{\alpha }\varphi \right) &=&r^{\alpha -2}A_{\alpha }\varphi ,
\\
A_{\alpha }\left( r^{\beta }\varphi \right) &=&r^{\beta }A_{\alpha +\beta
}\varphi , \\
A_{\alpha }\left( \varphi \log r\right) &=&\left( A_{\alpha }\varphi \right)
\log r+B_{\alpha }\varphi , \\
B_{\alpha }\left( r^{\beta }\varphi \right) &=&r^{\beta }B_{\alpha +\beta
}\varphi , \\
B_{\alpha }\left( \varphi \log r\right) &=&\left( B_{\alpha }\varphi \right)
\log r+2\varphi .
\end{eqnarray*}%
In addition, 
\begin{eqnarray}
\left. A_{\alpha }\right\vert _{\mathcal{P}_{m}} &=&r^{2}\Delta +\alpha
\left( 2m+\alpha +n-2\right) ,  \label{eq2.40} \\
\left. B_{\alpha }\right\vert _{\mathcal{P}_{m}} &=&2m+2\alpha +n-2,
\label{eq2.41}
\end{eqnarray}%
and 
\begin{equation}
\left. A_{\alpha }\right\vert _{r^{2k}\mathcal{H}_{m-2k}}=\left( \alpha
+2k\right) \left( 2m-2k+\alpha +n-2\right)  \label{eq2.42}
\end{equation}%
for $k=0,1,2,\cdots ,\left[ \frac{m}{2}\right] $. In particular,%
\begin{eqnarray}
&&\left. \left( A_{2-n}A_{4-n}\right) \right\vert _{r^{2k}\mathcal{H}_{m-2k}}
\label{eq2.43} \\
&=&\left( 2m-2k\right) \left( 2m-2k+2\right) \left( 2k+2-n\right) \left(
2k+4-n\right) ,  \notag
\end{eqnarray}%
for $k=0,1,2,\cdots ,\left[ \frac{m}{2}\right] $.

\begin{lemma}
\label{lem2.5}For any real numbers $\alpha $ and $\beta $, and any
nonnegative integer $k$, we have 
\begin{eqnarray*}
B_{\alpha }\left( \varphi \log ^{k}r\right) &=&B_{\alpha }\varphi \cdot \log
^{k}r+2k\varphi \log ^{k-1}r, \\
A_{\alpha }\left( \varphi \log ^{k}r\right) &=&A_{\alpha }\varphi \cdot \log
^{k}r+kB_{\alpha }\varphi \cdot \log ^{k-1}r+k\left( k-1\right) \varphi \log
^{k-2}r,
\end{eqnarray*}%
and 
\begin{eqnarray*}
&&A_{\alpha }A_{\beta }\left( \varphi \log ^{k}r\right) \\
&=&A_{\alpha }A_{\beta }\varphi \cdot \log ^{k}r+k\left( A_{\alpha }B_{\beta
}\varphi +B_{\alpha }A_{\beta }\varphi \right) \log ^{k-1}r \\
&&+k\left( k-1\right) \left( A_{\alpha }\varphi +A_{\beta }\varphi
+B_{\alpha }B_{\beta }\varphi \right) \log ^{k-2}r \\
&&+k\left( k-1\right) \left( k-2\right) \left( B_{\alpha }\varphi +B_{\beta
}\varphi \right) \log ^{k-3}r \\
&&+k\left( k-1\right) \left( k-2\right) \left( k-3\right) \varphi \log
^{k-4}r.
\end{eqnarray*}
\end{lemma}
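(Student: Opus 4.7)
\textbf{Proof proposal for Lemma \ref{lem2.5}.} All three identities amount to expanding Leibniz rules and tracking how many times a logarithm gets differentiated away. Since $B_\alpha = 2r\partial_r + (2\alpha+n-2)$ is first order, the $B_\alpha$--identity is immediate from the chain rule $r\partial_r(\log^k r) = k\log^{k-1}r$: a direct computation gives
\[
B_\alpha(\varphi\log^k r) = \bigl(2r\partial_r\varphi + (2\alpha+n-2)\varphi\bigr)\log^k r + 2k\varphi\log^{k-1}r,
\]
which is the claimed identity. I would present this first as a warm-up and as the base case for induction.

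For the $A_\alpha$--identity I would compute $r^2\Delta(\varphi\log^k r)$ by the Leibniz rule for the Euclidean Laplacian. Using $\nabla\log^k r = (k\log^{k-1}r)\,x/r^2$ together with the elementary identity
\[
r^2\Delta(\log^k r) \;=\; k(n-2)\log^{k-1}r + k(k-1)\log^{k-2}r
\]
(which itself follows by induction from $r^2\Delta\log r = n-2$ and another application of Leibniz), one gets
\[
r^2\Delta(\varphi\log^k r) = r^2\Delta\varphi\cdot\log^k r + 2k\, r\partial_r\varphi\cdot\log^{k-1}r + k(n-2)\varphi\log^{k-1}r + k(k-1)\varphi\log^{k-2}r.
\]
Adding the $2\alpha r\partial_r$ and $\alpha(\alpha+n-2)$ terms from $A_\alpha$ and regrouping by powers of $\log r$ collapses the $\log^{k-1}r$ coefficient to exactly $k B_\alpha\varphi$, giving the stated formula. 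Alternatively one can prove it by induction on $k$, using the base case $k=1$ already recorded in the text and the already-proved $B_\alpha$ identity in the inductive step; I would likely present the direct computation, since it is just as short and is easier to check.

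Finally, for $A_\alpha A_\beta(\varphi\log^k r)$ I would apply $A_\alpha$ to the already established expansion of $A_\beta(\varphi\log^k r)$. This gives three terms of the form $A_\alpha(\psi\log^j r)$ for $j = k, k-1, k-2$ with $\psi = A_\beta\varphi,\ kB_\beta\varphi,\ k(k-1)\varphi$ respectively. Expanding each using the single-operator $A_\alpha$ formula already proved and collecting by powers of $\log r$ yields exactly five families of contributions, one for each of $\log^{k-j}r$ with $j=0,1,2,3,4$; the coefficients assemble precisely into the stated combinatorial factors $k$, $k(k-1)$, $k(k-1)(k-2)$, $k(k-1)(k-2)(k-3)$.

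The only potential obstacle is arithmetic bookkeeping in the last step: one must verify that, for instance, the $\log^{k-2}r$ coefficient is $k(k-1)(A_\alpha\varphi + A_\beta\varphi + B_\alpha B_\beta\varphi)$, which requires combining the $A_\beta\varphi\cdot k(k-1)\log^{k-2}r$ term with the $kB_\beta\varphi$-subterm producing $k(k-1)B_\alpha B_\beta\varphi\log^{k-2}r$ and the $k(k-1)\varphi$-subterm producing $k(k-1)A_\alpha\varphi\log^{k-2}r$. This is routine but worth writing out explicitly so that the reader can check each of the five coefficients. No deeper idea beyond Leibniz and the base identities \(A_\alpha(\varphi\log r) = A_\alpha\varphi\cdot\log r + B_\alpha\varphi\) and \(B_\alpha(\varphi\log r) = B_\alpha\varphi\cdot\log r + 2\varphi\) is needed.
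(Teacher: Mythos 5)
Your proof is correct, but it follows a genuinely different route than the paper's. You prove the identities by direct Leibniz expansion in $x$-space: compute $r\partial_r(\log^k r)=k\log^{k-1}r$ and $r^2\Delta(\log^k r)=k(n-2)\log^{k-1}r+k(k-1)\log^{k-2}r$, and then expand $A_\alpha(\varphi\log^k r)$ term by term. The paper instead uses a parameter-differentiation trick: it writes $\log^k r=\left.\partial_\beta^k\right|_{\beta=0}r^\beta$, combines this with the intertwining identity $A_\alpha(r^\beta\varphi)=r^\beta A_{\alpha+\beta}\varphi$ (resp.\ the analogous one for $B_\alpha$), and then applies the one-variable Leibniz rule to the product $r^\beta\cdot A_{\alpha+\beta}\varphi$. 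The key algebraic point in the paper's proof is that $A_\alpha$ is quadratic in $\alpha$ and $B_\alpha$ is linear, so that $\partial_\alpha A_\alpha=B_\alpha$, $\partial_\alpha^2 A_\alpha=2$, $\partial_\alpha^3 A_\alpha=0$ and the Leibniz sum truncates automatically; for the triple identity this extends by noting $A_{\alpha+\gamma}A_{\beta+\gamma}$ is a degree-four polynomial in $\gamma$. Both proofs are valid; the paper's is more algebraic and uniform (it never needs to touch the Laplacian of $\log^k r$ explicitly, and the five terms in the final identity drop out automatically from the vanishing of the fifth $\gamma$-derivative), while yours is more elementary and concrete, requires the auxiliary inductive computation of $r^2\Delta(\log^k r)$, and involves a bit more explicit bookkeeping when you stack $A_\alpha$ on top of the $A_\beta$-expansion — but that bookkeeping is exactly what you describe, and it does close up.
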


\begin{proof}
Observe%
\begin{equation*}
\frac{\partial }{\partial \alpha }B_{\alpha }\varphi =2\varphi ,\quad \frac{%
\partial ^{2}}{\partial \alpha ^{2}}B_{\alpha }\varphi =0.
\end{equation*}%
Now since $B_{\alpha }\left( r^{\beta }\varphi \right) =r^{\beta }B_{\alpha
+\beta }\varphi $, we know 
\begin{eqnarray*}
B_{\alpha }\left( \varphi \log ^{k}r\right) &=&\left. \frac{\partial ^{k}}{%
\partial \beta ^{k}}\right\vert _{\beta =0}B_{\alpha }\left( r^{\beta
}\varphi \right) \\
&=&\left. \frac{\partial ^{k}}{\partial \beta ^{k}}\right\vert _{\beta
=0}\left( r^{\beta }B_{\alpha +\beta }\varphi \right) \\
&=&B_{\alpha }\varphi \cdot \log ^{k}r+2k\varphi \log ^{k-1}r,
\end{eqnarray*}%
here we have used the Newton-Lebniz formula. For the second equation, we
start with 
\begin{equation*}
\frac{\partial }{\partial \alpha }A_{\alpha }\varphi =B_{\alpha }\varphi
,\quad \frac{\partial ^{2}}{\partial \alpha ^{2}}A_{\alpha }\varphi
=2\varphi ,\quad \frac{\partial ^{3}}{\partial \alpha ^{3}}A_{\alpha
}\varphi =0,
\end{equation*}%
then 
\begin{eqnarray*}
A_{\alpha }\left( \varphi \log ^{k}r\right) &=&\left. \frac{\partial ^{k}}{%
\partial \beta ^{k}}\right\vert _{\beta =0}A_{\alpha }\left( r^{\beta
}\varphi \right) \\
&=&\left. \frac{\partial ^{k}}{\partial \beta ^{k}}\right\vert _{\beta
=0}\left( r^{\beta }A_{\alpha +\beta }\varphi \right) \\
&=&A_{\alpha }\varphi \cdot \log ^{k}r+kB_{\alpha }\varphi \cdot \log
^{k-1}r+k\left( k-1\right) \varphi \log ^{k-2}r.
\end{eqnarray*}
\end{proof}

Define an operator 
\begin{equation}
M_{g}\varphi =4\func{div}\left( A\left( \nabla _{g}\varphi ,e_{i}\right)
e_{i}\right) +\left( 2-n\right) \func{div}\left( J\nabla _{g}\varphi \right)
.  \label{eq2.44}
\end{equation}%
The Paneitz operator can be written as 
\begin{equation}
P_{g}\varphi =\Delta _{g}^{2}\varphi +M_{g}\varphi +\frac{n-4}{2}Q\varphi .
\label{eq2.45}
\end{equation}%
For any $\alpha \in \mathbb{R}$, define 
\begin{eqnarray}
N_{\alpha ,g}\varphi &=&r^{4}M_{g}\varphi +8\alpha r^{2}A\left( r\partial
_{r},\nabla _{g}\varphi \right) +2\left( 2-n\right) \alpha r^{2}J\cdot
r\partial _{r}\varphi  \label{eq2.46} \\
&&+4\alpha r^{2}\func{div}\left( A\left( r\partial _{r},e_{i}\right)
e_{i}\right) \varphi +\left( 2-n\right) \alpha r^{2}\cdot r\partial
_{r}J\cdot \varphi  \notag \\
&&+4\alpha \left( \alpha -2\right) A\left( r\partial _{r},r\partial
_{r}\right) \varphi +\left( 2-n\right) \alpha \left( \alpha +n-2\right)
r^{2}J\varphi ,  \notag
\end{eqnarray}%
then 
\begin{equation}
M_{g}\left( r^{\alpha }\varphi \right) =r^{\alpha -4}N_{\alpha ,g}\varphi .
\label{eq2.47}
\end{equation}

At first, we claim that 
\begin{equation}
P_{g}\left( r^{4-n}\right) =2n\left( 2-n\right) \left( 4-n\right) \omega
_{n}\delta _{p}+fr^{-n},  \label{eq2.48}
\end{equation}%
with $f=O_{\infty }\left( r^{4}\right) $.

Indeed, since $r^{4-n}$ is radial, we have 
\begin{equation}
\Delta _{g}^{2}\left( r^{4-n}\right) =2n\left( 2-n\right) \left( 4-n\right)
\omega _{n}\delta _{p}.  \label{eq2.49}
\end{equation}%
On the other hand,%
\begin{equation}
M_{g}\left( r^{4-n}\right) =r^{-n}N_{4-n,g}1.  \notag
\end{equation}%
In view of the facts 
\begin{eqnarray*}
&&\func{div}\left( A\left( r\partial _{r},e_{i}\right) e_{i}\right) \\
&=&\partial _{k}\left( x_{i}A_{ij}g^{jk}\right) \\
&=&g^{ij}A_{ij}+x_{i}\partial _{k}A_{ij}g^{jk}+O_{\infty }\left( r^{2}\right)
\\
&=&J+x_{i}A_{ijk}\left( p\right) \delta _{jk}+O_{\infty }\left( r^{2}\right)
\\
&=&x_{i}J_{i}\left( p\right) +O_{\infty }\left( r^{2}\right) \\
&=&O_{\infty }\left( r^{2}\right) ,
\end{eqnarray*}%
and 
\begin{equation*}
A\left( r\partial _{r},r\partial _{r}\right) =A_{ij}x_{i}x_{j}=A_{ijk}\left(
p\right) x_{i}x_{j}x_{k}+O_{\infty }\left( r^{4}\right) =O_{\infty }\left(
r^{4}\right) ,
\end{equation*}%
we see $N_{4-n,g}1\in O_{\infty }\left( r^{4}\right) $, (\ref{eq2.48})
follows.

To continue, first we introduce a notation. For any $\alpha \in \mathbb{R}$,
let 
\begin{equation}
A_{\alpha ,g}=r^{2}\Delta _{g}+2\alpha r\partial _{r}+\alpha \left( \alpha
+n-2\right) ,  \label{eq2.50}
\end{equation}%
then 
\begin{eqnarray*}
\Delta _{g}\left( r^{\alpha }\varphi \right) &=&r^{\alpha -2}A_{\alpha
,g}\varphi , \\
A_{\alpha ,g}\left( r^{\beta }\varphi \right) &=&r^{\beta }A_{\alpha +\beta
,g}\varphi , \\
A_{\alpha ,g}\left( \varphi \log r\right) &=&A_{\alpha ,g}\varphi \cdot \log
r+B_{\alpha }\varphi .
\end{eqnarray*}%
Note that 
\begin{equation}
A_{\alpha ,g}=A_{\alpha }+r^{2}\left( \Delta _{g}-\Delta \right) =A_{\alpha
}+r^{2}\partial _{i}\left( \left( g^{ij}-\delta _{ij}\right) \partial
_{j}\right) .  \label{eq2.51}
\end{equation}

A straightforward computation shows%
\begin{equation}
P_{g}\left( r^{\alpha }\varphi \right) =r^{\alpha -4}\left( A_{\alpha
-2}A_{\alpha }\varphi +K_{\alpha }\varphi \right) ,  \label{eq2.52}
\end{equation}%
where 
\begin{eqnarray}
&&K_{\alpha }\varphi  \label{eq2.53} \\
&=&A_{\alpha -2}\left( r^{2}\left( \Delta _{g}-\Delta \right) \varphi
\right) +r^{2}\left( \Delta _{g}-\Delta \right) A_{\alpha ,g}\varphi
+N_{\alpha ,g}\varphi +\frac{n-4}{2}r^{4}Q\varphi .  \notag
\end{eqnarray}%
We easily see that for any nonnegative integer $k$, $\varphi =O_{\infty
}\left( r^{k}\right) $ implies $K_{\alpha }\varphi =O_{\infty }\left(
r^{k+2}\right) $.

We also introduce the following two operators,%
\begin{eqnarray}
K_{\alpha }^{\left( 1\right) }\varphi &=&\frac{\partial }{\partial \alpha }%
K_{\alpha }\varphi  \label{eq2.54} \\
&=&B_{\alpha -2}\left( r^{2}\left( \Delta _{g}-\Delta \right) \varphi
\right) +r^{2}\left( \Delta _{g}-\Delta \right) B_{\alpha }\varphi  \notag \\
&&+8r^{2}A\left( r\partial _{r},\nabla _{g}\varphi \right) +2\left(
2-n\right) r^{2}J\cdot r\partial _{r}\varphi  \notag \\
&&+4r^{2}\func{div}\left( A\left( r\partial _{r},e_{i}\right) e_{i}\right)
\varphi +\left( 2-n\right) r^{2}\cdot r\partial _{r}J\cdot \varphi  \notag \\
&&+8\left( \alpha -1\right) A\left( r\partial _{r},r\partial _{r}\right)
\varphi +\left( 2-n\right) \left( 2\alpha +n-2\right) r^{2}J\varphi ,  \notag
\end{eqnarray}%
and 
\begin{eqnarray}
K_{\alpha }^{\left( 2\right) }\varphi &=&\frac{\partial }{\partial \alpha }%
K_{\alpha }^{\left( 1\right) }\varphi  \label{eq2.55} \\
&=&4r^{2}\left( \Delta _{g}-\Delta \right) \varphi +8A\left( r\partial
_{r},r\partial _{r}\right) \varphi +2\left( 2-n\right) r^{2}J\varphi  \notag
\\
&=&K^{\left( 2\right) }\varphi .  \notag
\end{eqnarray}%
Clearly, $\varphi =O_{\infty }\left( r^{k}\right) $ for some nonnegative
integer would imply $K_{\alpha }^{\left( 1\right) }\varphi ,K^{\left(
2\right) }\varphi =O_{\infty }\left( r^{k+2}\right) $. In addition, these
operators satisfy the following 
\begin{eqnarray*}
K_{\alpha }\left( r^{\beta }\varphi \right) &=&r^{\beta }K_{\alpha +\beta
}\varphi , \\
K_{\alpha }\left( \varphi \log r\right) &=&K_{\alpha }\varphi \cdot \log
r+K_{\alpha }^{\left( 1\right) }\varphi , \\
K_{\alpha }^{\left( 1\right) }\left( r^{\beta }\varphi \right) &=&r^{\beta
}K_{\alpha +\beta }^{\left( 1\right) }\varphi , \\
K_{\alpha }^{\left( 1\right) }\left( \varphi \log r\right) &=&K_{\alpha
}^{\left( 1\right) }\varphi \cdot \log r+K_{\alpha }^{\left( 2\right)
}\varphi , \\
K^{\left( 2\right) }\left( r^{\beta }\varphi \right) &=&r^{\beta }K^{\left(
2\right) }\varphi , \\
K^{\left( 2\right) }\left( \varphi \log r\right) &=&K^{\left( 2\right)
}\varphi \cdot \log r.
\end{eqnarray*}%
More generally, we have

\begin{lemma}
\label{lem2.6}For any nonnegative integer $k$, we have 
\begin{eqnarray*}
K_{\alpha }^{\left( 1\right) }\left( \varphi \log ^{k}r\right) &=&K_{\alpha
}^{\left( 1\right) }\varphi \cdot \log ^{k}r+kK^{\left( 2\right) }\varphi
\cdot \log ^{k-1}r, \\
K_{\alpha }\left( \varphi \log ^{k}r\right) &=&K_{\alpha }\varphi \cdot \log
^{k}r+kK_{\alpha }^{\left( 1\right) }\varphi \cdot \log ^{k-1}r+\frac{%
k\left( k-1\right) }{2}K^{\left( 2\right) }\varphi \cdot \log ^{k-2}\varphi .
\end{eqnarray*}
\end{lemma}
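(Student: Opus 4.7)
The plan is to mirror the strategy used in Lemma~\ref{lem2.5}: apply the Newton--Leibniz/Taylor trick to the covariance identity $K_{\alpha}(r^{\beta}\varphi)=r^{\beta}K_{\alpha+\beta}\varphi$, taking $k$ derivatives in $\beta$ at $\beta=0$. The crucial structural fact to exploit is that $K_{\alpha}$ is a polynomial of degree at most $2$ in $\alpha$, with leading second derivative equal to the $\alpha$-independent operator $K^{(2)}$.

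First I would verify this polynomial dependence directly from the definition (\ref{eq2.53}). The operators $A_{\alpha-2}$, $A_{\alpha,g}$ and the $\alpha$-dependent pieces of $N_{\alpha,g}$ (namely the factors $8\alpha$, $2(2-n)\alpha$, $4\alpha$, $4\alpha(\alpha-2)$, $(2-n)\alpha(\alpha+n-2)$) are all polynomials of degree at most two in $\alpha$; the remaining $Q$-term is $\alpha$-free. Hence $\partial_{\alpha}^{2}K_{\alpha}$ is constant in $\alpha$ and coincides with $K^{(2)}$ defined in (\ref{eq2.55}), while $\partial_{\alpha}^{3}K_{\alpha}=0$. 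Analogously $K_{\alpha}^{(1)}$ is affine in $\alpha$ with $\partial_{\alpha}K_{\alpha}^{(1)}=K^{(2)}$ and $\partial_{\alpha}^{2}K_{\alpha}^{(1)}=0$.

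Next I would establish the two claimed formulas by differentiating in $\beta$. For the first: starting from $K_{\alpha}^{(1)}(r^{\beta}\varphi)=r^{\beta}K_{\alpha+\beta}^{(1)}\varphi$, apply $\partial_{\beta}^{k}|_{\beta=0}$. The left-hand side becomes $K_{\alpha}^{(1)}(\varphi\log^{k}r)$ since $\partial_{\beta}^{k}|_{\beta=0}r^{\beta}=\log^{k}r$. The Leibniz rule on the right yields
\[
\sum_{j=0}^{k}\binom{k}{j}(\log^{k-j}r)\,\partial_{\alpha}^{j}K_{\alpha}^{(1)}\varphi,
\]
and since $\partial_{\alpha}^{j}K_{\alpha}^{(1)}=0$ for $j\geq 2$, only $j=0,1$ survive, giving $K_{\alpha}^{(1)}\varphi\cdot\log^{k}r+kK^{(2)}\varphi\cdot\log^{k-1}r$. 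For the second formula, apply the same differentiation to $K_{\alpha}(r^{\beta}\varphi)=r^{\beta}K_{\alpha+\beta}\varphi$; now $\partial_{\alpha}^{j}K_{\alpha}=0$ for $j\geq 3$, so only $j=0,1,2$ contribute, producing exactly
\[
K_{\alpha}\varphi\cdot\log^{k}r+kK_{\alpha}^{(1)}\varphi\cdot\log^{k-1}r+\tbinom{k}{2}K^{(2)}\varphi\cdot\log^{k-2}r.
\]

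There is essentially no main obstacle: the only point requiring care is confirming the covariance identity $K_{\alpha}(r^{\beta}\varphi)=r^{\beta}K_{\alpha+\beta}\varphi$, but this follows immediately from the previously recorded transformations $A_{\alpha}(r^{\beta}\varphi)=r^{\beta}A_{\alpha+\beta}\varphi$, $A_{\alpha,g}(r^{\beta}\varphi)=r^{\beta}A_{\alpha+\beta,g}\varphi$ and the observation that $r^{\beta}$ commutes with purely algebraic multiplication operators, so only the $\alpha$-labels shift. As in Lemma~\ref{lem2.5}, the argument is essentially a coordinate-free restatement of the binomial expansion of $(\partial_{\alpha})^{k}$ applied to a polynomial of bounded degree in $\alpha$.
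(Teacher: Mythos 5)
Your proof is correct and follows the same route the paper intends: the paper states that Lemma~\ref{lem2.6} "follows from the same proof of Lemma~\ref{lem2.5}" (which uses exactly the $\partial_\beta^k\big|_{\beta=0}$ trick on the covariance identity, together with the bounded polynomial dependence on $\alpha$), and you verify the degree-$\leq 2$ dependence directly from the definitions of $A_{\alpha-2}$, $A_{\alpha,g}$ and $N_{\alpha,g}$ and then run the Leibniz expansion, truncating at $j=2$ for $K_\alpha$ and $j=1$ for $K_\alpha^{(1)}$. This reproduces the binomial coefficients $k$ and $\tbinom{k}{2}=\tfrac{k(k-1)}{2}$ exactly as stated.
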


This follows from the same proof of Lemma \ref{lem2.4}.

\begin{case}
\label{case2.1}The dimension $n$ is odd.
\end{case}

In this case, we claim that we may find a $\psi =\sum_{i=1}^{n}\psi _{i}$,
with $\psi _{i}\in \mathcal{P}_{i}$ such that 
\begin{equation}
A_{2-n}A_{4-n}\psi +K_{4-n}\psi +f=O_{\infty }\left( r^{n+1}\right) .
\label{eq2.56}
\end{equation}%
Once this has been done, then we have 
\begin{equation*}
r^{-n}\left( A_{2-n}A_{4-n}\psi +K_{4-n}\psi +f\right) \in C^{\alpha }\quad 
\text{for any }0<\alpha <1\text{.}
\end{equation*}%
If the domain is small enough, then we may find $\overline{\psi }\in
C^{4,\alpha }$ such that 
\begin{equation*}
P_{g}\overline{\psi }=-r^{-n}\left( A_{2-n}A_{4-n}\psi +K_{4-n}\psi
+f\right) .
\end{equation*}%
Then 
\begin{equation}
P_{g}\left( r^{4-n}\left( 1+\psi \right) +\overline{\psi }\right) =2n\left(
2-n\right) \left( 4-n\right) \omega _{n}\delta _{p}.  \label{eq2.57}
\end{equation}%
Hence the Green's function satisfies 
\begin{equation}
2n\left( 2-n\right) \left( 4-n\right) \omega _{n}G_{p}=r^{4-n}\left( 1+\psi
\right) +\overline{\psi }+O_{\infty }\left( 1\right) .  \label{eq2.58}
\end{equation}

To define $\psi _{1},\cdots ,\psi _{n}$, we let $\psi _{1}=0,\psi _{2}=0$
and $\psi _{3}=0$. One easily see 
\begin{eqnarray}
f_{3} &=&A_{2-n}A_{4-n}\left( \psi _{1}+\psi _{2}+\psi _{3}\right)
+K_{4-n}\left( \psi _{1}+\psi _{2}+\psi _{3}\right) +f  \label{eq2.59} \\
&=&f=O_{\infty }\left( r^{4}\right) .  \notag
\end{eqnarray}%
Assume we have found $\psi _{1},\psi _{2},\cdots ,\psi _{k}$ for $3\leq
k\leq n-1$, such that $\psi _{i}\in \mathcal{P}_{i}$ and 
\begin{equation*}
f_{k}=A_{2-n}A_{4-n}\left( \sum_{i=1}^{k}\psi _{i}\right) +K_{4-n}\left(
\sum_{i=1}^{k}\psi _{i}\right) +f=O_{\infty }\left( r^{k+1}\right) ,
\end{equation*}%
then we write $f_{k}=\phi _{k+1}+O_{\infty }\left( r^{k+2}\right) ,\phi
_{k+1}\in \mathcal{P}_{k+1}$. Since 
\begin{eqnarray*}
&&\left. A_{2-n}A_{4-n}\right\vert _{r^{2j}\mathcal{H}_{k+1-2j}} \\
&=&\left( 2\left( k+1\right) -2j\right) \left( 2\left( k+1\right)
-2j+2\right) \left( 2j+2-n\right) \left( 2j+4-n\right) \neq 0
\end{eqnarray*}%
for $j=0,1,2,\cdots ,\left[ \frac{k+1}{2}\right] $, $A_{2-n}A_{4-n}$ is
invertible on $\mathcal{P}_{k+1}$. We may find a unique $\psi _{k+1}\in 
\mathcal{P}_{k+1}$, such that 
\begin{equation}
A_{2-n}A_{4-n}\psi _{k+1}+\phi _{k+1}=0.  \label{eq2.60}
\end{equation}%
Then 
\begin{eqnarray*}
f_{k+1} &=&A_{2-n}A_{4-n}\left( \sum_{i=1}^{k+1}\psi _{i}\right)
+K_{4-n}\left( \sum_{i=1}^{k+1}\psi _{i}\right) +f \\
&=&f_{k}+A_{2-n}A_{4-n}\psi _{k+1}+K_{4-n}\psi _{k+1}=O_{\infty }\left(
r^{k+2}\right) .
\end{eqnarray*}%
This finishes the induction process.

\begin{case}
\label{case2.2}$n$ is even and larger than or equal to $8$.
\end{case}

In this case, we first set $\psi _{1}=0,\psi _{2}=0$ and $\psi _{3}=0$.
Since $A_{2-n}A_{4-n}$ is invertible on $\mathcal{P}_{k}$ for $0\leq k\leq
n-5$, by the same induction procedure as Case \ref{case2.1}, we can find $%
\psi _{4},\cdots ,\psi _{n-5}$ such that $\psi _{i}\in \mathcal{P}_{i}$ and 
\begin{equation*}
f_{n-5}=A_{2-n}A_{4-n}\left( \sum_{i=1}^{n-5}\psi _{i}\right) +K_{4-n}\left(
\sum_{i=1}^{n-5}\psi _{i}\right) +f=O_{\infty }\left( r^{n-4}\right) .
\end{equation*}%
To continue, we write 
\begin{equation*}
f_{n-5}=\phi _{n-4}+O_{\infty }\left( r^{n-3}\right) ,\quad \phi _{n-4}\in 
\mathcal{P}_{n-4}.
\end{equation*}%
Let $\psi _{n-4}^{\left( 0\right) }=\alpha _{n-4}^{\left( 0\right) }+\beta
_{n-4}^{\left( 0\right) }\log r$ with $\alpha _{n-4}^{\left( 0\right)
},\beta _{n-4}^{\left( 0\right) }\in \mathcal{P}_{n-4}$, then%
\begin{eqnarray*}
&&A_{2-n}A_{4-n}\psi _{n-4}^{\left( 0\right) } \\
&=&A_{2-n}A_{4-n}\alpha _{n-4}^{\left( 0\right) }+\left(
A_{2-n}B_{4-n}+B_{2-n}A_{4-n}\right) \beta _{n-4}^{\left( 0\right)
}+A_{2-n}A_{4-n}\beta _{n-4}^{\left( 0\right) }\cdot \log r.
\end{eqnarray*}%
Let $\beta _{n-4}^{\left( 0\right) }\in r^{n-4}\mathcal{H}_{0}$, then since 
\begin{equation*}
\left. \left( A_{2-n}B_{4-n}+B_{2-n}A_{4-n}\right) \right\vert _{r^{n-4}%
\mathcal{H}_{0}}=-2\left( n-2\right) \left( n-4\right) \neq 0,
\end{equation*}%
and 
\begin{eqnarray*}
&&\left. A_{2-n}A_{4-n}\right\vert _{r^{2k}\mathcal{H}_{n-4-2k}} \\
&=&\left( 2\left( n-4\right) -2k\right) \left( 2\left( n-4\right)
-2k+2\right) \left( 2k+2-n\right) \left( 2k+4-n\right) \neq 0,
\end{eqnarray*}%
for $0\leq k\leq \frac{n}{2}-3$, we may find a $\alpha _{n-4}^{\left(
0\right) }\in \mathcal{P}_{n-4}$ and a $\beta _{n-4}^{\left( 0\right) }\in
r^{n-4}\mathcal{H}_{0}$ such that%
\begin{equation*}
A_{2-n}A_{4-n}\psi _{n-4}^{\left( 0\right) }+\phi _{n-4}=0.
\end{equation*}%
This implies 
\begin{eqnarray*}
&&f_{n-4} \\
&=&A_{2-n}A_{4-n}\left( \sum_{i=1}^{n-5}\psi _{i}+\psi _{n-4}^{\left(
0\right) }\right) +K_{4-n}\left( \sum_{i=1}^{n-5}\psi _{i}+\psi
_{n-4}^{\left( 0\right) }\right) +f \\
&=&f_{n-5}+A_{2-n}A_{4-n}\psi _{n-4}^{\left( 0\right) }+K_{4-n}\psi
_{n-4}^{\left( 0\right) } \\
&=&O_{\infty }\left( r^{n-3}\right) +O_{\infty }\left( r^{n-2}\right) \log r.
\end{eqnarray*}%
Next we write 
\begin{equation*}
f_{n-4}=\phi _{n-3}+O_{\infty }\left( r^{n-2}\right) \log r+O_{\infty
}\left( r^{n-2}\right) ,\quad \phi _{n-3}\in \mathcal{P}_{n-3}.
\end{equation*}%
Again by similar arguments, we can find $\psi _{n-3}^{\left( 0\right) }\in 
\mathcal{P}_{n-3}+r^{n-4}\mathcal{H}_{1}\log r$ such that%
\begin{equation*}
A_{2-n}A_{4-n}\psi _{n-3}^{\left( 0\right) }+\phi _{n-3}=0.
\end{equation*}%
Then%
\begin{eqnarray*}
&&f_{n-3} \\
&=&A_{2-n}A_{4-n}\left( \sum_{i=1}^{n-5}\psi _{i}+\psi _{n-4}^{\left(
0\right) }+\psi _{n-3}^{\left( 0\right) }\right) +K_{4-n}\left(
\sum_{i=1}^{n-5}\psi _{i}+\psi _{n-4}^{\left( 0\right) }+\psi _{n-3}^{\left(
0\right) }\right) +f \\
&=&f_{n-4}+A_{2-n}A_{4-n}\psi _{n-3}^{\left( 0\right) }+K_{4-n}\psi
_{n-3}^{\left( 0\right) } \\
&=&O_{\infty }\left( r^{n-2}\right) \log r+O_{\infty }\left( r^{n-2}\right) .
\end{eqnarray*}%
We write 
\begin{equation*}
f_{n-3}=\phi _{n-2}^{\left( 1\right) }\log r+O_{\infty }\left(
r^{n-2}\right) +O_{\infty }\left( r^{n-1}\right) \log r.
\end{equation*}%
Similar as before, we may find%
\begin{equation*}
\psi _{n-2}^{\left( 1\right) }\in \mathcal{P}_{n-2}\log r+\left( r^{n-2}%
\mathcal{H}_{0}+r^{n-4}\mathcal{H}_{2}\right) \log ^{2}r
\end{equation*}%
such that%
\begin{equation*}
A_{2-n}A_{4-n}\psi _{n-2}^{\left( 1\right) }+\phi _{n-2}^{\left( 1\right)
}\log r\in \mathcal{P}_{n-2}.
\end{equation*}%
Indeed, for $\psi _{n-2}^{\left( 1\right) }=\alpha _{n-2}^{\left( 1\right)
}\log r+\beta _{n-2}^{\left( 1\right) }\log ^{2}r$, with $\alpha
_{n-2}^{\left( 1\right) },\beta _{n-2}^{\left( 1\right) }\in \mathcal{P}%
_{n-2}$, we have 
\begin{eqnarray*}
&&A_{2-n}A_{4-n}\psi _{n-2}^{\left( 1\right) } \\
&=&\left( A_{2-n}A_{4-n}\alpha _{n-2}^{\left( 1\right) }+2\left(
A_{2-n}B_{4-n}+B_{2-n}A_{4-n}\right) \beta _{n-2}^{\left( 1\right) }\right)
\log r \\
&&+A_{2-n}A_{4-n}\beta _{n-2}^{\left( 1\right) }\cdot \log ^{2}r+\mathcal{P}%
_{n-2}.
\end{eqnarray*}%
Let $\beta _{n-2}^{\left( 1\right) }\in r^{n-2}\mathcal{H}_{0}+r^{n-4}%
\mathcal{H}_{2}$. Since%
\begin{eqnarray*}
\left. 2\left( A_{2-n}B_{4-n}+B_{2-n}A_{4-n}\right) \right\vert _{r^{n-2}%
\mathcal{H}_{0}} &=&4n\left( n-2\right) \neq 0, \\
\left. 2\left( A_{2-n}B_{4-n}+B_{2-n}A_{4-n}\right) \right\vert _{r^{n-4}%
\mathcal{H}_{2}} &=&-4n\left( n+2\right) \neq 0,
\end{eqnarray*}%
and%
\begin{eqnarray*}
&&\left. A_{2-n}A_{4-n}\right\vert _{r^{2k}\mathcal{H}_{n-2-2k}} \\
&=&\left( 2\left( n-2\right) -2k\right) \left( 2\left( n-2\right)
-2k+2\right) \left( 2k+2-n\right) \left( 2k+4-n\right) \neq 0
\end{eqnarray*}%
for $0\leq k\leq \frac{n}{2}-3$, we may find the above needed $\psi
_{n-2}^{\left( 1\right) }$. Then 
\begin{eqnarray*}
&&f_{n-2}^{\left( 1\right) } \\
&=&A_{2-n}A_{4-n}\left( \sum_{i=1}^{n-5}\psi _{i}+\psi _{n-4}^{\left(
0\right) }+\psi _{n-3}^{\left( 0\right) }+\psi _{n-2}^{\left( 1\right)
}\right) \\
&&+K_{4-n}\left( \sum_{i=1}^{n-5}\psi _{i}+\psi _{n-4}^{\left( 0\right)
}+\psi _{n-3}^{\left( 0\right) }+\psi _{n-2}^{\left( 1\right) }\right) +f \\
&=&f_{n-3}+A_{2-n}A_{4-n}\psi _{n-2}^{\left( 1\right) }+K_{4-n}\psi
_{n-2}^{\left( 1\right) } \\
&=&O_{\infty }\left( r^{n-2}\right) +O_{\infty }\left( r^{n-1}\right) \log
r+O_{\infty }\left( r^{n}\right) \log ^{2}r.
\end{eqnarray*}%
The next step is to remove the $\mathcal{P}_{n-2}$ term in $O_{\infty
}\left( r^{n-2}\right) $, then the $\mathcal{P}_{n-1}\log r$ term in $%
O_{\infty }\left( r^{n-1}\right) \log r$ and so on, until we reach $%
O_{\infty }\left( r^{n+1}\right) \log ^{2}r+O_{\infty }\left( r^{n+1}\right)
\log r+O_{\infty }\left( r^{n+1}\right) +O_{\infty }\left( r^{n+2}\right)
\log ^{3}r$. That is, we find%
\begin{eqnarray*}
\psi _{n-4}^{\left( 0\right) } &\in &\mathcal{P}_{n-4}+r^{n-4}\mathcal{H}%
_{0}\log r, \\
\psi _{n-3}^{\left( 0\right) } &\in &\mathcal{P}_{n-3}+r^{n-4}\mathcal{H}%
_{1}\log r, \\
\psi _{n-2}^{\left( 1\right) } &\in &\mathcal{P}_{n-2}\log r+\left( r^{n-2}%
\mathcal{H}_{0}+r^{n-4}\mathcal{H}_{2}\right) \log ^{2}r, \\
\psi _{n-2}^{\left( 0\right) } &\in &\mathcal{P}_{n-2}+\left( r^{n-2}%
\mathcal{H}_{0}+r^{n-4}\mathcal{H}_{2}\right) \log r, \\
\psi _{n-1}^{\left( 1\right) } &\in &\mathcal{P}_{n-1}\log r+\left( r^{n-2}%
\mathcal{H}_{1}+r^{n-4}\mathcal{H}_{3}\right) \log ^{2}r, \\
\psi _{n-1}^{\left( 0\right) } &\in &\mathcal{P}_{n-1}+\left( r^{n-2}%
\mathcal{H}_{1}+r^{n-4}\mathcal{H}_{3}\right) \log r, \\
\psi _{n}^{\left( 2\right) } &\in &\mathcal{P}_{n}\log ^{2}r+\left( r^{n-2}%
\mathcal{H}_{2}+r^{n-4}\mathcal{H}_{4}\right) \log ^{3}r, \\
\psi _{n}^{\left( 1\right) } &\in &\mathcal{P}_{n}\log r+\left( r^{n-2}%
\mathcal{H}_{2}+r^{n-4}\mathcal{H}_{4}\right) \log ^{2}r,
\end{eqnarray*}%
and%
\begin{equation*}
\psi _{n}^{\left( 0\right) }\in \mathcal{P}_{n}+\left( r^{n-2}\mathcal{H}%
_{2}+r^{n-4}\mathcal{H}_{4}\right) \log r,
\end{equation*}%
such that 
\begin{eqnarray*}
f_{n} &=&A_{2-n}A_{4-n}\left( \sum_{i=1}^{n-5}\psi _{i}+\sum_{i=n-4}^{n}\psi
_{i}^{\left( 0\right) }+\sum_{i=n-2}^{n}\psi _{i}^{\left( 1\right) }+\psi
_{n}^{\left( 2\right) }\right) \\
&&+K_{4-n}\left( \sum_{i=1}^{n-5}\psi _{i}+\sum_{i=n-4}^{n}\psi _{i}^{\left(
0\right) }+\sum_{i=n-2}^{n}\psi _{i}^{\left( 1\right) }+\psi _{n}^{\left(
2\right) }\right) +f \\
&=&O_{\infty }\left( r^{n+1}\right) \log ^{2}r+O_{\infty }\left(
r^{n+1}\right) \log r+O_{\infty }\left( r^{n+1}\right) +O_{\infty }\left(
r^{n+2}\right) \log ^{3}r.
\end{eqnarray*}%
Clearly $r^{-n}f_{n}\in C^{\alpha }$ for any $0<\alpha <1$. This implies
locally we may find $\overline{\psi }\in C^{4,\alpha }$ such that $P_{g}%
\overline{\psi }=-r^{-n}f_{n}$. Let 
\begin{equation*}
\psi =\sum_{i=1}^{n-5}\psi _{i}+\sum_{i=n-4}^{n}\psi _{i}^{\left( 0\right)
}+\sum_{i=n-2}^{n}\psi _{i}^{\left( 1\right) }+\psi _{n}^{\left( 2\right) },
\end{equation*}%
then 
\begin{equation*}
P_{g}\left( r^{4-n}\left( 1+\psi \right) +\overline{\psi }\right) =2n\left(
2-n\right) \left( 4-n\right) \omega _{n}\delta _{p}
\end{equation*}%
on a small disk. Hence%
\begin{equation*}
2n\left( 2-n\right) \left( 4-n\right) \omega _{n}G_{p}=r^{4-n}\left( 1+\psi
\right) +\overline{\psi }+O_{\infty }\left( 1\right) .
\end{equation*}

\begin{case}
\label{case2.3}$n=6$.
\end{case}

This case can be done similarly as for Case \ref{case2.2}. That is, we can
find 
\begin{eqnarray*}
\psi _{4}^{\left( 0\right) } &\in &\mathcal{P}_{4}+\left( r^{4}\mathcal{H}%
_{0}+r^{2}\mathcal{H}_{2}\right) \log r, \\
\psi _{5}^{\left( 0\right) } &\in &\mathcal{P}_{5}+\left( r^{4}\mathcal{H}%
_{1}+r^{2}\mathcal{H}_{3}\right) \log r, \\
\psi _{6}^{\left( 1\right) } &\in &\mathcal{P}_{6}\log r+\left( r^{4}%
\mathcal{H}_{2}+r^{2}\mathcal{H}_{4}\right) \log ^{2}r,
\end{eqnarray*}%
and 
\begin{equation*}
\psi _{6}^{\left( 0\right) }\in \mathcal{P}_{6}+\left( r^{4}\mathcal{H}%
_{2}+r^{2}\mathcal{H}_{4}\right) \log r,
\end{equation*}%
such that 
\begin{eqnarray*}
&&f_{6} \\
&=&A_{-4}A_{-2}\left( \psi _{4}^{\left( 0\right) }+\psi _{5}^{\left(
0\right) }+\psi _{6}^{\left( 0\right) }+\psi _{6}^{\left( 1\right) }\right)
+K_{-2}\left( \psi _{4}^{\left( 0\right) }+\psi _{5}^{\left( 0\right) }+\psi
_{6}^{\left( 0\right) }+\psi _{6}^{\left( 1\right) }\right) +f \\
&=&O_{\infty }\left( r^{7}\right) \log r+O_{\infty }\left( r^{7}\right)
+O_{\infty }\left( r^{8}\right) \log ^{2}r.
\end{eqnarray*}%
The remaining argument can be done as before.

\begin{case}
\label{case2.4}$M$ is conformal flat near $p$.
\end{case}

In this case, we may take the metric $g$ such that it is flat near $p$. This
implies $P_{g}=\Delta ^{2}$, and hence 
\begin{equation*}
P_{g}\left( r^{4-n}\right) =2n\left( 2-n\right) \left( 4-n\right) \omega
_{n}\delta _{p}.
\end{equation*}%
It follows that%
\begin{equation*}
2n\left( 2-n\right) \left( 4-n\right) \omega _{n}G_{P,p}=r^{4-n}+O_{\infty
}\left( 1\right) .
\end{equation*}

Finally, to get the leading terms in the expansion for $n\geq 8$, by a
direct computation we have $f_{3}=f=\phi _{4}+O_{\infty }\left( r^{5}\right) 
$, with $\phi _{4}\in \mathcal{P}_{4}$ and 
\begin{eqnarray}
&&\phi _{4}  \label{eq2.61} \\
&=&-\frac{4\left( n-4\right) }{9}\sum_{kl}\left( W_{ikjl}\left( p\right)
x_{i}x_{j}\right) ^{2}+2\left( n-4\right) \left( n-6\right)
r^{2}J_{ij}\left( p\right) x_{i}x_{j}  \notag \\
&&+\frac{\left( n-4\right) \left\vert W\left( p\right) \right\vert ^{2}}{%
24\left( n-1\right) }r^{4}.  \notag
\end{eqnarray}%
From this, we can compute the leading terms of $G_{P,p}$ directly from the
arguments in Case \ref{case2.2}.

\section{Existence and regularity of maximizers\label{sec3}}

The main aim of this section is to show the strict inequality between $%
\Theta _{4}\left( g\right) $ and $\Theta _{4}\left( S^{n}\right) $ in the
assumption of Proposition \ref{prop2.1} is valid as long as $\left(
M,g\right) $ is not conformally equivalent to the standard sphere. As in the
Yamabe problem case (\cite{LP}), this is achieved by a careful choice of
test function.

\begin{proposition}
\label{prop3.1}Assume $\left( M,g\right) $ is a smooth compact $n$
dimensional Riemannian manifold with $n\geq 5$, $Y\left( g\right) >0$, $%
Q\geq 0$ and not identically zero, then%
\begin{equation}
\Theta _{4}\left( g\right) \geq \Theta _{4}\left( S^{n}\right)  \label{eq3.1}
\end{equation}%
and equality holds if and only if $\left( M,g\right) $ is conformally
equivalent to the standard sphere.
\end{proposition}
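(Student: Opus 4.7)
The plan is to use the local formulation in (\ref{eq2.2}),
\[
\Theta_4(g) = \sup_{u \in W^{4,\frac{2n}{n+4}}(M)\setminus\{0\}} \frac{\int_M Pu \cdot u \, d\mu}{\|Pu\|_{L^{\frac{2n}{n+4}}}^2},
\]
and exhibit, whenever $(M,g)$ is not conformally diffeomorphic to the standard sphere, a test family $u_{p,\epsilon}$ concentrated at a well-chosen point $p \in M$ whose ratio exceeds $\Theta_4(S^n)$ for small $\epsilon > 0$. The building block is the sphere extremal $u_\epsilon(x) = \left(\epsilon/(|x|^2+\epsilon^2)\right)^{(n-4)/2}$ from (\ref{eq2.14}), written in conformal normal coordinates at $p$. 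The weak inequality $\Theta_4(g) \geq \Theta_4(S^n)$ and the easy direction of the equality case follow from conformal invariance combined with (\ref{eq2.16}), since $(S^n, g_{S^n})$ itself realizes the value.

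\textbf{Case A ($n \geq 8$, $(M,g)$ not locally conformally flat).} Choose $p$ with $W(p) \neq 0$ and test with $u_{p,\epsilon} = \eta u_\epsilon$, $\eta$ a fixed radial cutoff. Expanding $P u_{p,\epsilon}$ via (\ref{eq1.10}) in conformal normal coordinates, the vanishing relations (\ref{eq2.25})--(\ref{eq2.26}) eliminate all odd and linear-in-curvature contributions, while (\ref{eq2.27}) produces the first nontrivial quadratic curvature term. A computation parallel to the one leading to $\psi_4$ in (\ref{eq2.35}) yields
\[
\frac{\int_M P u_{p,\epsilon}\cdot u_{p,\epsilon}\, d\mu}{\|P u_{p,\epsilon}\|_{L^{\frac{2n}{n+4}}}^2} = \Theta_4(S^n) + c_n |W(p)|^2 \epsilon^4 + o(\epsilon^4),
\]
with $c_n > 0$, giving the strict inequality.

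\textbf{Case B ($n = 5,6,7$ or $(M,g)$ locally conformally flat).} The local curvature correction is subleading or absent, so we use a globally aware test function interpolating between the bubble near $p$ and (a multiple of) the Green's function away from $p$:
\[
u_{p,\epsilon} = \eta u_\epsilon + (1-\eta)\cdot 2n(n-2)(n-4)\omega_n \,\epsilon^{(n-4)/2} G_{P,p},
\]
where $\eta$ is a smooth cutoff supported in a small ball around $p$ and the constant is chosen to match the two pieces at leading order via (\ref{eq2.32}). Substituting $2n(n-2)(n-4)\omega_n G_{P,p} = r^{4-n} + A + O^{(4)}(r)$, the constant term $A$ produces a leading correction $c_n' A\, \epsilon^{n-4} + o(\epsilon^{n-4})$ with $c_n' > 0$; strict inequality thus reduces to $A > 0$.

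\textbf{Main obstacle.} The decisive step is the positive mass statement $A > 0$ in Case B, with $A = 0$ only in the conformally spherical case. Under our hypotheses $G_P > 0$ by \cite{HY4}, and $Y(g) > 0$ together with positivity of $G_L$ gives a Schoen--Yau positive mass theorem for the asymptotically flat metric $G_{L,p}^{4/(n-2)} g$ on $M \setminus \{p\}$. The plan is to transfer this positivity to the Paneitz setting by expressing the Paneitz mass $A$ in terms of the conformal-Laplacian mass and a nonnegative curvature integral involving $Q \geq 0$, using the expansions of Proposition \ref{prop2.2}. The rigidity case of the positive mass theorem then forces $A = 0$ to yield conformal equivalence of $(M,g)$ with $S^n$, completing the characterization of the equality case. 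The Yamabe-style test function calculations in Cases A and B are essentially bookkeeping; the genuine difficulty is this positive mass transfer.
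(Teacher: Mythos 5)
Your overall strategy — test the local quotient $\int_M Pu\cdot u\,d\mu / \|Pu\|_{L^{2n/(n+4)}}^2$ with a concentrating bubble, using the Weyl tensor correction for large $n$ and a positive-mass/Green's function argument for small $n$ or the conformally flat case — is exactly the paper's strategy. However, your Case A has a genuine gap: the plain cutoff bubble $u_{p,\epsilon}=\eta u_\epsilon$ does \emph{not} work for $n=8$ and $n=9$, and the paper explicitly treats these dimensions separately for precisely this reason. The annulus $\{\delta<r<2\delta\}$ contributes an error of size $\epsilon^{(n-4)/2}$ to $P(\eta u_\epsilon)$, hence an error of size $\epsilon^{n(n-4)/(n+4)}$ to $\|P(\eta u_\epsilon)\|_{L^{2n/(n+4)}}^2$; this is $o(\epsilon^4)$ only when $n(n-4)>4(n+4)$, i.e.\ $n>4+4\sqrt2\approx9.66$, so only for $n\geq10$. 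For $n=8,9$ the cutoff error swamps the claimed $|W(p)|^2\epsilon^4$ gain. (Note the contrast with the $Y_4$ functional, where the exponent on the denominator is $\tfrac{2n}{n-4}>2$ rather than $\tfrac{2n}{n+4}<2$ and the plain cutoff does work for $n\geq9$; this is the ``annulus is more expensive'' phenomenon the paper flags.) The paper's Cases 3.3 and 3.4 repair this by replacing the sharp cutoff tail with the Green's function expansion — in dimension $8$ the test function carries the $\log r$ coefficient from $G_{P,p}$ and the gain is $\epsilon^4\log(1/\epsilon)$, and in dimension $9$ it carries the quartic polynomial $\psi_4$ from Proposition~\ref{prop2.2} — so that outside $B_\delta$ the test function equals a multiple of $G_{P,p}$, $P\varphi_\lambda$ vanishes there, and the only annulus error is through the much smaller quantity $\beta=\lambda^{(n-4)/2}r^{4-n}-u_\lambda=O(\lambda^{n/2})$.

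A secondary point on Case B: your test function is the convex interpolation $\eta u_\epsilon+(1-\eta)\,c\,\epsilon^{(n-4)/2}G_{P,p}$, which equals $u_\epsilon$ on $B_\delta$, whereas the paper takes $\varphi_\lambda=u_\lambda+\eta_1(r/\delta)\beta+\lambda^{(n-4)/2}(A_0+\alpha)$. The paper's choice is designed so that on $B_\delta$ one has $P\varphi_\lambda=Pu_\lambda-\lambda^{(n-4)/2}P(r^{4-n})$, in which the biharmonic part is exactly Euclidean (both arguments are radial in conformal normal coordinates) and all lower-order curvature terms act only on the small function $\beta$. With your choice, the lower-order parts of $P$ act on $u_\epsilon$ itself, and the resulting $O(\epsilon^{n-4})$ corrections from $Q$, $A$, $J$ compete with the $A_0\epsilon^{n-4}$ positive-mass term rather than being negligible; you would at minimum need to check that these extra terms either cancel or come with a favorable sign, which is not obvious. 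The deferral of $A_0>0$ to a positive mass theorem is appropriate; the paper cites \cite{HY4} for exactly this.
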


Before we start the proof of Proposition \ref{prop3.1}, we list several
basic identities which will facilitate the calculations. For $b>-n$ and $%
2a-b>n$,%
\begin{equation}
\int_{\mathbb{R}^{n}}\frac{\left\vert x\right\vert ^{b}}{\left( \left\vert
x\right\vert ^{2}+1\right) ^{a}}dx=\frac{n\omega _{n}}{2}\frac{\Gamma \left( 
\frac{b+n}{2}\right) \Gamma \left( a-\frac{b+n}{2}\right) }{\Gamma \left(
a\right) }=\pi ^{\frac{n}{2}}\frac{\Gamma \left( \frac{b+n}{2}\right) \Gamma
\left( a-\frac{b+n}{2}\right) }{\Gamma \left( a\right) \Gamma \left( \frac{n%
}{2}\right) }.  \label{eq3.2}
\end{equation}%
If we fix an orthonormal frame at $p$, and let $\Delta $ be the Euclidean
Laplacian, then%
\begin{eqnarray}
&&\Delta \sum_{k,l}\left( W_{ikjl}\left( p\right) x_{i}x_{j}\right) ^{2}
\label{eq3.3} \\
&=&4W_{ikjl}\left( p\right) W_{ikml}\left( p\right)
x_{j}x_{m}+4W_{ikjl}\left( p\right) W_{ilmk}\left( p\right) x_{j}x_{m} 
\notag \\
&=&2\sum_{ikl}\left( W_{ikjl}\left( p\right) x_{j}+W_{iljk}\left( p\right)
x_{j}\right) ^{2}  \notag \\
&=&2\sum_{jkl}\left( W_{ijkl}\left( p\right) x_{i}+W_{ilkj}\left( p\right)
x_{i}\right) ^{2},  \notag
\end{eqnarray}%
and%
\begin{equation}
\Delta ^{2}\sum_{kl}\left( W_{ikjl}\left( p\right) x_{i}x_{j}\right)
^{2}=8\left( \left\vert W\left( p\right) \right\vert ^{2}+W_{ikjl}\left(
p\right) W_{iljk}\left( p\right) \right) =12\left\vert W\left( p\right)
\right\vert ^{2},  \label{eq3.4}
\end{equation}%
here we have used%
\begin{equation}
W_{ikjl}\left( p\right) W_{iljk}\left( p\right) =\frac{1}{2}\left\vert
W\left( p\right) \right\vert ^{2},  \label{eq3.5}
\end{equation}%
which follows from the usual Bianchi identity. Hence%
\begin{eqnarray}
&&\sum_{kl}\left( W_{ikjl}\left( p\right) x_{i}x_{j}\right) ^{2}
\label{eq3.6} \\
&=&\left[ \sum_{kl}\left( W_{ikjl}\left( p\right) x_{i}x_{j}\right) ^{2}-%
\frac{r^{2}}{n+4}\sum_{jkl}\left( W_{ijkl}\left( p\right)
x_{i}+W_{ilkj}\left( p\right) x_{i}\right) ^{2}\right.  \notag \\
&&\left. +\frac{3}{2\left( n+2\right) \left( n+4\right) }\left\vert W\left(
p\right) \right\vert ^{2}r^{4}\right] +r^{2}\cdot \left[ \frac{1}{n+4}%
\sum_{jkl}\left( W_{ijkl}\left( p\right) x_{i}+W_{ilkj}\left( p\right)
x_{i}\right) ^{2}\right.  \notag \\
&&\left. -\frac{3}{n\left( n+4\right) }\left\vert W\left( p\right)
\right\vert ^{2}r^{2}\right] +r^{4}\cdot \frac{3}{2n\left( n+2\right) }%
\left\vert W\left( p\right) \right\vert ^{2}.  \notag
\end{eqnarray}%
The polynomials in the square brackets are harmonic. In particular,%
\begin{equation}
\int_{S^{n-1}}\sum_{kl}\left( W_{ikjl}\left( p\right) x_{i}x_{j}\right)
^{2}dS=\frac{3\omega _{n}}{2\left( n+2\right) }\left\vert W\left( p\right)
\right\vert ^{2}.  \label{eq3.7}
\end{equation}

Due to the fact that in (\ref{eq2.2}) the power $\frac{2n}{n+4}<2<\frac{2n}{%
n-4}$, to control the error on annulus region, the choice of test functions
for $\Theta _{4}\left( g\right) $ will be more delicate than those for the
classical Yamabe problem or for $Y_{4}\left( g\right) $ (see (\ref{eq1.13}))
in the literature. In particular, dimension $8$ and $9$ has to be separated
from dimensions $5,6,7$ and dimensions greater than $9$.

Fix a function $\eta _{1}\in C^{\infty }\left( \mathbb{R},\mathbb{R}\right) $
such that $\left. \eta _{1}\right\vert _{\left( -\infty ,1\right) }=0,\left.
\eta _{1}\right\vert _{\left( 2,\infty \right) }=1$ and $0\leq \eta _{1}\leq
1$. Denote $\eta _{2}=1-\eta _{1}$. For convenience we always denote%
\begin{equation}
H=2n\left( 2-n\right) \left( 4-n\right) \omega _{n}G_{P,p}.  \label{eq3.8}
\end{equation}%
Let $\delta $ be a small positive number. For $0<\lambda <\delta $, let%
\begin{equation}
u_{\lambda }=\left( \frac{\lambda }{\left\vert x\right\vert ^{2}+\lambda ^{2}%
}\right) ^{\frac{n-4}{2}}  \label{eq3.9}
\end{equation}%
and%
\begin{equation}
\beta =\lambda ^{\frac{n-4}{2}}r^{4-n}-u_{\lambda }.  \label{eq3.10}
\end{equation}%
If we write $\phi \left( x\right) =\left\vert x\right\vert ^{4-n}-\left(
\left\vert x\right\vert ^{2}+1\right) ^{\frac{4-n}{2}}$, then $\beta
=\lambda ^{\frac{4-n}{2}}\phi \left( \frac{x}{\lambda }\right) $.

\begin{case}
\label{case3.1}$M$ is conformally flat near $p$, $n\geq 5$.
\end{case}

In this case we can assume that the metric $g$ is flat near $p$. Using the
Euclidean coordinate at $p$, namely $x_{1},\cdots ,x_{n}$ we have%
\begin{equation}
H=r^{4-n}+A_{0}+\alpha .  \label{eq3.11}
\end{equation}%
Here $A_{0}$ is a constant, $\alpha =O_{\infty }\left( r\right) $ is a
biharmonic function (with respect to Euclidean metric).

Define%
\begin{equation}
\varphi _{\lambda }=\left\{ 
\begin{array}{ll}
u_{\lambda }+\eta _{1}\left( \frac{r}{\delta }\right) \beta +\lambda ^{\frac{%
n-4}{2}}A_{0}+\lambda ^{\frac{n-4}{2}}\alpha , & \text{on }B_{3\delta
}\left( p\right) , \\ 
\lambda ^{\frac{n-4}{2}}H, & \text{on }M\backslash B_{3\delta }\left(
p\right) .%
\end{array}%
\right.  \label{eq3.12}
\end{equation}%
It is clear that $\varphi _{\lambda }\in C^{\infty }\left( M\right) $. Note
that%
\begin{eqnarray}
&&P\varphi _{\lambda }  \label{eq3.13} \\
&=&\left\{ 
\begin{array}{ll}
n\left( n+2\right) \left( n-2\right) \left( n-4\right) \lambda ^{\frac{n+4}{2%
}}\left( \left\vert x\right\vert ^{2}+\lambda ^{2}\right) ^{-\frac{n+4}{2}},
& \text{on }B_{\delta }\left( p\right) , \\ 
O\left( \lambda ^{\frac{n}{2}}\right) , & \text{on }B_{2\delta }\left(
p\right) \backslash B_{\delta }\left( p\right) , \\ 
0, & \text{on }M\backslash B_{2\delta }\left( p\right) .%
\end{array}%
\right.  \notag
\end{eqnarray}%
Hence%
\begin{eqnarray}
&&\int_{M}\left\vert P\varphi _{\lambda }\right\vert ^{\frac{2n}{n+4}}d\mu
\label{eq3.14} \\
&=&\left( n\left( n+2\right) \left( n-2\right) \left( n-4\right) \right) ^{%
\frac{2n}{n+4}}\frac{\Gamma \left( \frac{n}{2}\right) \pi ^{\frac{n}{2}}}{%
\left( n-1\right) !}+O\left( \lambda ^{\frac{n^{2}}{n+4}}\right) .  \notag
\end{eqnarray}%
It follows that%
\begin{eqnarray}
&&\left\Vert P\varphi _{\lambda }\right\Vert _{L^{\frac{2n}{n+4}}}^{2}
\label{eq3.15} \\
&=&\left( n\left( n+2\right) \left( n-2\right) \left( n-4\right) \right) ^{2}%
\frac{\Gamma \left( \frac{n}{2}\right) ^{\frac{n+4}{n}}\pi ^{\frac{n+4}{2}}}{%
\left( \left( n-1\right) !\right) ^{\frac{n+4}{n}}}+O\left( \lambda ^{\frac{%
n^{2}}{n+4}}\right) .  \notag
\end{eqnarray}%
On the other hand,%
\begin{eqnarray}
&&\int_{M}P\varphi _{\lambda }\cdot \varphi _{\lambda }d\mu  \label{eq3.16}
\\
&=&n\left( n+2\right) \left( n-2\right) \left( n-4\right) \frac{\Gamma
\left( \frac{n}{2}\right) \pi ^{\frac{n}{2}}}{\left( n-1\right) !}+\frac{%
4\left( n-2\right) \left( n-4\right) \pi ^{\frac{n}{2}}}{\Gamma \left( \frac{%
n}{2}\right) }A_{0}\lambda ^{n-4}+o\left( \lambda ^{n-4}\right) .  \notag
\end{eqnarray}%
Hence%
\begin{eqnarray}
&&\frac{\int_{M}P\varphi _{\lambda }\cdot \varphi _{\lambda }d\mu }{%
\left\Vert P\varphi _{\lambda }\right\Vert _{L^{\frac{2n}{n+4}}}^{2}}
\label{eq3.17} \\
&=&\Theta _{4}\left( S^{n}\right) +\frac{4\left( \left( n-1\right) !\right)
^{\frac{n+4}{n}}}{n^{2}\left( n+2\right) ^{2}\left( n-2\right) \left(
n-4\right) \Gamma \left( \frac{n}{2}\right) ^{\frac{2n+4}{n}}\pi ^{2}}%
A_{0}\lambda ^{n-4}+o\left( \lambda ^{n-4}\right) .  \notag
\end{eqnarray}%
If $\left( M,g\right) $ is not conformally diffeomorphic to the standard
sphere, then it follows from the arguments in \cite[section 6]{HY4} that $%
A_{0}>0$. Fix $\delta $ small and let $\lambda \downarrow 0$, we see $\Theta
_{4}\left( g\right) >\Theta _{4}\left( S^{n}\right) $.

\begin{case}
\label{case3.2}$n=5,6,7$.
\end{case}

In this case by a conformal change of the metric we can assume $\exp _{p}$
preserves the volume near $p$ (note this is another way of saying we choose
a conformal normal coordinate, see \cite{LP}). Using the normal coordinate
at $p$, $x_{1},\cdots ,x_{n}$, we have%
\begin{equation}
H=r^{4-n}+A_{0}+\alpha .  \label{eq3.18}
\end{equation}%
Here $A_{0}$ is a constant and $\alpha =O^{\left( 4\right) }\left( r\right) $%
. Define%
\begin{equation}
\varphi _{\lambda }=\left\{ 
\begin{array}{ll}
u_{\lambda }+\eta _{1}\left( \frac{r}{\delta }\right) \beta +\lambda ^{\frac{%
n-4}{2}}A_{0}+\lambda ^{\frac{n-4}{2}}\alpha , & \text{on }B_{3\delta
}\left( p\right) , \\ 
\lambda ^{\frac{n-4}{2}}H, & \text{on }M\backslash B_{3\delta }\left(
p\right) .%
\end{array}%
\right.  \label{eq3.19}
\end{equation}%
then $\varphi _{\lambda }\in W^{4,\frac{2n}{n+4}}\left( M\right) $. On $%
B_{\delta }\left( p\right) \backslash \left\{ p\right\} ,$%
\begin{eqnarray}
&&P\varphi _{\lambda }  \label{eq3.20} \\
&=&Pu_{\lambda }-\lambda ^{\frac{n-4}{2}}P\left( r^{4-n}\right)  \notag \\
&=&\Delta ^{2}u_{\lambda }-4\func{div}\left( A\left( \nabla \beta
,e_{i}\right) e_{i}\right) +\left( n-2\right) \func{div}\left( J\nabla \beta
\right) -\frac{n-4}{2}Q\beta  \notag \\
&=&n\left( n+2\right) \left( n-2\right) \left( n-4\right) \lambda ^{\frac{n+4%
}{2}}\left( \left\vert x\right\vert ^{2}+\lambda ^{2}\right) ^{-\frac{n+4}{2}%
}+O\left( \lambda ^{\frac{n}{2}}\left\vert x\right\vert ^{2-n}\right) . 
\notag
\end{eqnarray}%
Here we will need to use (\ref{eq2.25}) and (\ref{eq2.26}). On $B_{2\delta
}\left( p\right) \backslash B_{\delta }\left( p\right) $,%
\begin{equation}
P\varphi _{\lambda }=-P\left( \eta _{2}\left( \frac{r}{\delta }\right) \beta
\right) =O\left( \lambda ^{\frac{n}{2}}\right)  \label{eq3.21}
\end{equation}%
and on $M\backslash B_{2\delta }\left( p\right) $, $P\varphi _{\lambda }=0$.
Hence%
\begin{eqnarray*}
&&\int_{M}\left\vert P\varphi _{\lambda }\right\vert ^{\frac{2n}{n+4}}d\mu \\
&=&\left( n\left( n+2\right) \left( n-2\right) \left( n-4\right) \right) ^{%
\frac{2n}{n+4}}\frac{\Gamma \left( \frac{n}{2}\right) \pi ^{\frac{n}{2}}}{%
\left( n-1\right) !}+o\left( \lambda ^{n-4}\right) ,
\end{eqnarray*}%
and%
\begin{eqnarray*}
&&\left\Vert P\varphi _{\lambda }\right\Vert _{L^{\frac{2n}{n+4}}}^{2} \\
&=&\left( n\left( n+2\right) \left( n-2\right) \left( n-4\right) \right) ^{2}%
\frac{\Gamma \left( \frac{n}{2}\right) ^{\frac{n+4}{n}}\pi ^{\frac{n+4}{2}}}{%
\left( \left( n-1\right) !\right) ^{\frac{n+4}{n}}}+o\left( \lambda
^{n-4}\right) .
\end{eqnarray*}%
On the other hand,%
\begin{eqnarray}
&&\int_{M}P\varphi _{\lambda }\cdot \varphi _{\lambda }d\mu  \label{eq3.22}
\\
&=&n\left( n+2\right) \left( n-2\right) \left( n-4\right) \frac{\Gamma
\left( \frac{n}{2}\right) \pi ^{\frac{n}{2}}}{\left( n-1\right) !}+\frac{%
4\left( n-2\right) \left( n-4\right) \pi ^{\frac{n}{2}}}{\Gamma \left( \frac{%
n}{2}\right) }A_{0}\lambda ^{n-4}+o\left( \lambda ^{n-4}\right) .  \notag
\end{eqnarray}%
Summing up we have%
\begin{eqnarray}
&&\frac{\int_{M}P\varphi _{\lambda }\cdot \varphi _{\lambda }d\mu }{%
\left\Vert P\varphi _{\lambda }\right\Vert _{L^{\frac{2n}{n+4}}}^{2}}
\label{eq3.23} \\
&=&\Theta _{4}\left( S^{n}\right) +\frac{4\left( \left( n-1\right) !\right)
^{\frac{n+4}{n}}}{n^{2}\left( n+2\right) ^{2}\left( n-2\right) \left(
n-4\right) \Gamma \left( \frac{n}{2}\right) ^{\frac{2n+4}{n}}\pi ^{2}}%
A_{0}\lambda ^{n-4}+o\left( \lambda ^{n-4}\right) .  \notag
\end{eqnarray}%
By \cite[section 6]{HY4} we know when $\left( M,g\right) $ is not
conformally diffeomorphic to the standard sphere, $A_{0}$ is strictly
positive. Letting $\lambda \downarrow 0$, we get $\Theta _{4}\left( g\right)
>\Theta _{4}\left( S^{n}\right) $ in this case.

\begin{case}
\label{case3.3}$\left( M,g\right) $ is not locally conformally flat and $n=8$%
.
\end{case}

In this case we can choose $p$ such that $W\left( p\right) \neq 0$. By a
conformal change of the metric we can assume $\exp _{p}$ preserves the
volume near $p$. Using the normal coordinate at $p$, $x_{1},\cdots ,x_{8}$,
we have%
\begin{equation}
H=r^{-4}-\frac{\left\vert W\left( p\right) \right\vert ^{2}}{1440}\log
r+\alpha .  \label{eq3.24}
\end{equation}%
Here $\alpha =O^{\left( 4\right) }\left( 1\right) $. Define%
\begin{equation}
\varphi _{\lambda }=\left\{ 
\begin{array}{ll}
u_{\lambda }+\eta _{1}\left( \frac{r}{\delta }\right) \beta -\frac{%
\left\vert W\left( p\right) \right\vert ^{2}}{1440}\lambda ^{2}\log
r+\lambda ^{2}\alpha , & \text{on }B_{3\delta }\left( p\right) , \\ 
\lambda ^{2}H, & \text{on }M\backslash B_{3\delta }\left( p\right) .%
\end{array}%
\right.  \label{eq3.25}
\end{equation}%
Then $\varphi \in W^{4,\frac{4}{3}}\left( M\right) $. On $B_{\delta }\left(
p\right) \backslash \left\{ p\right\} ,$%
\begin{eqnarray}
P\varphi _{\lambda } &=&Pu_{\lambda }-\lambda ^{2}P\left( r^{-4}\right)
\label{eq3.26} \\
&=&1920\lambda ^{6}\left( \left\vert x\right\vert ^{2}+\lambda ^{2}\right)
^{-6}-4\func{div}\left( A\left( \nabla \beta ,e_{i}\right) e_{i}\right) +6%
\func{div}\left( J\nabla \beta \right) -2Q\beta  \notag \\
&=&1920\lambda ^{6}\left( \left\vert x\right\vert ^{2}+\lambda ^{2}\right)
^{-6}+O\left( \beta \right) +O\left( \beta ^{\prime }r\right) +O\left( \beta
^{\prime \prime }r^{2}\right) .  \notag
\end{eqnarray}%
Here we have used (\ref{eq2.25}) and (\ref{eq2.26}). On $B_{2\delta }\left(
p\right) \backslash B_{\delta }\left( p\right) $,%
\begin{equation}
P\varphi _{\lambda }=-P\left( \eta _{2}\left( \frac{r}{\delta }\right) \beta
\right) =O\left( \lambda ^{4}\right)  \label{eq3.27}
\end{equation}%
and on $M\backslash B_{2\delta }\left( p\right) $, $P\varphi _{\lambda }=0$.
Note that%
\begin{eqnarray*}
\beta &=&\lambda ^{2}r^{-4}-\lambda ^{2}\left( r^{2}+\lambda ^{2}\right)
^{-2}, \\
\beta ^{\prime } &=&-4\lambda ^{2}r^{-5}+4\lambda ^{2}\left( r^{2}+\lambda
^{2}\right) ^{-3}r, \\
\beta ^{\prime \prime } &=&20\lambda ^{2}r^{-6}-24\lambda ^{2}\left(
r^{2}+\lambda ^{2}\right) ^{-4}r^{2}+4\lambda ^{2}\left( r^{2}+\lambda
^{2}\right) ^{-3}.
\end{eqnarray*}%
Hence we have%
\begin{equation}
\int_{M}\left\vert P\varphi _{\lambda }\right\vert ^{\frac{4}{3}}d\mu =\frac{%
1920^{\frac{4}{3}}\pi ^{4}}{840}+O\left( \lambda ^{4}\right) ,
\label{eq3.28}
\end{equation}%
and%
\begin{equation*}
\int_{M}P\varphi _{\lambda }\cdot \varphi _{\lambda }d\mu =\frac{1920\pi ^{4}%
}{840}+\frac{\pi ^{4}\left\vert W\left( p\right) \right\vert ^{2}}{90}%
\lambda ^{4}\log \frac{1}{\lambda }+O\left( \lambda ^{4}\right) .
\end{equation*}%
It follows that%
\begin{equation*}
\frac{\int_{M}P\varphi _{\lambda }\cdot \varphi _{\lambda }d\mu }{\left\Vert
P\varphi _{\lambda }\right\Vert _{L^{\frac{4}{3}}}^{2}}=\Theta _{4}\left(
S^{8}\right) +\frac{210^{\frac{3}{2}}}{41472000\pi ^{2}}\left\vert W\left(
p\right) \right\vert ^{2}\lambda ^{4}\log \frac{1}{\lambda }+O\left( \lambda
^{4}\right) .
\end{equation*}%
Hence $\Theta _{4}\left( g\right) >\Theta _{4}\left( S^{8}\right) $.

\begin{case}
\label{case3.4}$M$ is not conformally flat and $n=9$.
\end{case}

In this case we can choose $p$ such that $W\left( p\right) \neq 0$. By a
conformal change of metric we can assume $\exp _{p}$ preserves the volume
near $p$. Using the normal coordinate at $p$, $x_{1},\cdots ,x_{9}$, we have%
\begin{equation}
H=r^{-5}+r^{-5}\psi _{4}+\alpha .  \label{eq3.29}
\end{equation}%
Here $\alpha =O^{\left( 4\right) }\left( 1\right) $ and%
\begin{eqnarray}
&&\psi _{4}  \label{eq3.30} \\
&=&\frac{1}{280}\left[ \frac{2}{9}\sum_{kl}\left( W_{ikjl}\left( p\right)
x_{i}x_{j}\right) ^{2}-\frac{2}{117}r^{2}\sum_{jkl}\left( W_{ijkl}\left(
p\right) x_{i}+W_{ilkj}\left( p\right) x_{i}\right) ^{2}\right.  \notag \\
&&\left. +\frac{\left\vert W\left( p\right) \right\vert ^{2}}{429}r^{4}%
\right] +\frac{r^{2}}{144}\left[ \frac{4}{117}\sum_{jkl}\left(
W_{ijkl}\left( p\right) x_{i}+W_{ilkj}\left( p\right) x_{i}\right)
^{2}\right.  \notag \\
&&\left. -6J_{ij}\left( p\right) x_{i}x_{j}-\frac{103}{5616}\left\vert
W\left( p\right) \right\vert ^{2}r^{2}\right] +\frac{805}{1368576}\left\vert
W\left( p\right) \right\vert ^{2}r^{4}.  \notag
\end{eqnarray}%
Define%
\begin{equation}
\varphi _{\lambda }=\left\{ 
\begin{array}{ll}
u_{\lambda }+\eta _{1}\left( \frac{r}{\delta }\right) \beta +\lambda ^{\frac{%
5}{2}}r^{-5}\psi _{4}+\lambda ^{\frac{5}{2}}\alpha , & \text{on }B_{3\delta
}\left( p\right) , \\ 
\lambda ^{\frac{5}{2}}H, & \text{on }M\backslash B_{3\delta }\left( p\right)
.%
\end{array}%
\right.  \label{eq3.31}
\end{equation}%
Then $\varphi \in W^{4,\frac{18}{13}}\left( M\right) $. On $B_{\delta
}\left( p\right) \backslash \left\{ p\right\} ,$%
\begin{eqnarray}
P\varphi _{\lambda } &=&Pu_{\lambda }-\lambda ^{\frac{5}{2}}P\left(
r^{-5}\right)  \label{eq3.32} \\
&=&3465\lambda ^{\frac{13}{2}}\left( \left\vert x\right\vert ^{2}+\lambda
^{2}\right) ^{-\frac{13}{2}}-4\func{div}\left( A\left( \nabla \beta
,e_{i}\right) e_{i}\right) +7\func{div}\left( J\nabla \beta \right) -\frac{5%
}{2}Q\beta  \notag \\
&=&3465\lambda ^{\frac{13}{2}}\left( \left\vert x\right\vert ^{2}+\lambda
^{2}\right) ^{-\frac{13}{2}}-2\left( \frac{\beta ^{\prime }}{r}\right)
^{\prime }\frac{A_{ijkl}\left( p\right) x_{i}x_{j}x_{k}x_{l}}{r}  \notag \\
&&+\frac{7}{2}\left( \frac{\beta ^{\prime }}{r}\right) ^{\prime
}rJ_{ij}\left( p\right) x_{i}x_{j}+\frac{65}{2}\frac{\beta ^{\prime }}{r}%
J_{ij}\left( p\right) x_{i}x_{j}-\frac{5}{192}\left\vert W\left( p\right)
\right\vert ^{2}\beta  \notag \\
&&+O\left( \beta r\right) +O\left( \beta ^{\prime }r^{2}\right) +O\left(
\beta ^{\prime \prime }r^{3}\right) .  \notag
\end{eqnarray}%
On $B_{2\delta }\left( p\right) \backslash B_{\delta }\left( p\right) $,%
\begin{equation}
P\varphi _{\lambda }=-P\left( \eta _{2}\left( \frac{r}{\delta }\right) \beta
\right) =O\left( \lambda ^{\frac{9}{2}}\right)  \label{eq3.33}
\end{equation}%
and on $M\backslash B_{2\delta }\left( p\right) $, $P\varphi _{\lambda }=0$.
Note that%
\begin{eqnarray*}
\beta &=&\lambda ^{\frac{5}{2}}r^{-5}-\lambda ^{\frac{5}{2}}\left(
r^{2}+\lambda ^{2}\right) ^{-\frac{5}{2}}, \\
\beta ^{\prime } &=&-5\lambda ^{\frac{5}{2}}r^{-6}+5\lambda ^{\frac{5}{2}%
}\left( r^{2}+\lambda ^{2}\right) ^{-\frac{7}{2}}r, \\
\frac{\beta ^{\prime }}{r} &=&-5\lambda ^{\frac{5}{2}}r^{-7}+5\lambda ^{%
\frac{5}{2}}\left( r^{2}+\lambda ^{2}\right) ^{-\frac{7}{2}}, \\
\beta ^{\prime \prime } &=&30\lambda ^{\frac{5}{2}}r^{-7}-35\lambda ^{\frac{5%
}{2}}\left( r^{2}+\lambda ^{2}\right) ^{-\frac{9}{2}}r^{2}+5\lambda ^{\frac{5%
}{2}}\left( r^{2}+\lambda ^{2}\right) ^{-\frac{7}{2}}, \\
\left( \frac{\beta ^{\prime }}{r}\right) ^{\prime } &=&35\lambda ^{\frac{5}{2%
}}r^{-8}-35\lambda ^{\frac{5}{2}}\left( r^{2}+\lambda ^{2}\right) ^{-\frac{9%
}{2}}r.
\end{eqnarray*}%
A straightforward calculation shows%
\begin{eqnarray}
&&\int_{M}\left\vert P\varphi _{\lambda }\right\vert ^{\frac{18}{13}}d\mu
\label{eq3.34} \\
&=&\frac{3465^{\frac{18}{13}}\pi ^{5}}{6144}\left[ 1+\left( \frac{94208}{%
4459455}\frac{1}{\pi }-\frac{41}{9009}\right) \left\vert W\left( p\right)
\right\vert ^{2}\lambda ^{4}+o\left( \lambda ^{4}\right) \right] ,  \notag
\end{eqnarray}%
hence%
\begin{eqnarray}
&&\left\Vert P\varphi _{\lambda }\right\Vert _{L^{\frac{18}{13}}}^{2}
\label{eq3.35} \\
&=&\frac{3465^{2}\pi ^{\frac{65}{9}}}{6144^{\frac{13}{9}}}\left[ 1+\left( 
\frac{94208}{3087315}\frac{1}{\pi }-\frac{41}{6237}\right) \left\vert
W\left( p\right) \right\vert ^{2}\lambda ^{4}+o\left( \lambda ^{4}\right) %
\right] .  \notag
\end{eqnarray}%
On the other hand,%
\begin{eqnarray}
&&\int_{M}P\varphi _{\lambda }\cdot \varphi _{\lambda }d\mu  \label{eq3.36}
\\
&=&\frac{1155}{2048}\pi ^{5}\left[ 1+\left( \frac{94208}{3087315}\frac{1}{%
\pi }-\frac{41}{12474}\right) \left\vert W\left( p\right) \right\vert
^{2}\lambda ^{4}+o\left( \lambda ^{4}\right) \right] .  \notag
\end{eqnarray}%
Summing up we get%
\begin{equation}
\frac{\int_{M}P\varphi _{\lambda }\cdot \varphi _{\lambda }d\mu }{\left\Vert
P\varphi _{\lambda }\right\Vert _{L^{\frac{18}{13}}}^{2}}=\Theta _{4}\left(
S^{9}\right) \left( 1+\frac{41}{12474}\left\vert W\left( p\right)
\right\vert ^{2}\lambda ^{4}+o\left( \lambda ^{4}\right) \right) .
\label{eq3.37}
\end{equation}%
Hence we see that $\Theta _{4}\left( g\right) >\Theta _{4}\left(
S^{9}\right) $.

\begin{case}
\label{case3.5}$M$ is not conformally flat and $n\geq 10$.
\end{case}

We can find a point $p$ such that $W\left( p\right) \neq 0$. Let $%
x_{1},\cdots ,x_{n}$ be conformal normal coordinate at $p$, $\delta $ be a
small fixed positive number, and%
\begin{equation}
\varphi _{\lambda }=u_{\lambda }\left( x\right) \eta _{2}\left( \frac{%
\left\vert x\right\vert }{\delta }\right) .  \label{eq3.38}
\end{equation}%
Then on $B_{2\delta }\left( p\right) \backslash B_{\delta }\left( p\right) $,%
\begin{equation}
P\varphi _{\lambda }=O\left( \lambda ^{\frac{n-4}{2}}\right) .
\label{eq3.39}
\end{equation}%
On $B_{\delta }\left( p\right) ,$%
\begin{eqnarray}
&&P\varphi _{\lambda }  \label{eq3.40} \\
&=&n\left( n+2\right) \left( n-2\right) \left( n-4\right) \lambda ^{\frac{n+4%
}{2}}\left( \left\vert x\right\vert ^{2}+\lambda ^{2}\right) ^{-\frac{n+4}{2}%
}  \notag \\
&&-\frac{4}{9}\left( n-4\right) \lambda ^{\frac{n-4}{2}}\left( \left\vert
x\right\vert ^{2}+\lambda ^{2}\right) ^{-\frac{n}{2}}\sum_{kl}\left(
W_{ikjl}\left( p\right) x_{i}x_{j}\right) ^{2}  \notag \\
&&+\frac{n-4}{2}\lambda ^{\frac{n-4}{2}}\left( \left\vert x\right\vert
^{2}+\lambda ^{2}\right) ^{-\frac{n}{2}}\left( 4\left( n-6\right) \left\vert
x\right\vert ^{2}+\left( n^{2}-16\right) \lambda ^{2}\right) J_{ij}\left(
p\right) x_{i}x_{j}  \notag \\
&&+\frac{n-4}{24\left( n-1\right) }\lambda ^{\frac{n-4}{2}}\left\vert
W\left( p\right) \right\vert ^{2}\left( \left\vert x\right\vert ^{2}+\lambda
^{2}\right) ^{-\frac{n-4}{2}}  \notag \\
&&+O\left( \lambda ^{\frac{n-4}{2}}\left( \left\vert x\right\vert
^{2}+\lambda ^{2}\right) ^{-\frac{n-4}{2}}\left\vert x\right\vert \right) . 
\notag
\end{eqnarray}%
Using the basic inequality%
\begin{equation}
\left\vert \left\vert 1+t\right\vert ^{\frac{2n}{n+4}}-1-\frac{2n}{n+4}%
t\right\vert \leq C\left\vert t\right\vert ^{\frac{2n}{n+4}}  \label{eq3.41}
\end{equation}%
we see on $B_{\delta }\left( p\right) ,$%
\begin{eqnarray*}
&&\left\vert P\varphi _{\lambda }\right\vert ^{\frac{2n}{n+4}} \\
&=&\left( n\left( n+2\right) \left( n-2\right) \left( n-4\right) \right) ^{%
\frac{2n}{n+4}}\lambda ^{n}\left( \left\vert x\right\vert ^{2}+\lambda
^{2}\right) ^{-n}\cdot \\
&&\left[ 1-\frac{8}{9}\frac{\lambda ^{-4}\left( \left\vert x\right\vert
^{2}+\lambda ^{2}\right) ^{2}}{\left( n+2\right) \left( n+4\right) \left(
n-2\right) }\sum_{kl}\left( W_{ikjl}\left( p\right) x_{i}x_{j}\right)
^{2}\right. \\
&&+\frac{\lambda ^{-4}\left( \left\vert x\right\vert ^{2}+\lambda
^{2}\right) ^{2}}{\left( n+2\right) \left( n+4\right) \left( n-2\right) }%
\left( 4\left( n-6\right) \left\vert x\right\vert ^{2}+\left(
n^{2}-16\right) \lambda ^{2}\right) J_{ij}\left( p\right) x_{i}x_{j} \\
&&+\frac{\lambda ^{-4}\left\vert W\left( p\right) \right\vert ^{2}}{12\left(
n+2\right) \left( n+4\right) \left( n-1\right) \left( n-2\right) }\left(
\left\vert x\right\vert ^{2}+\lambda ^{2}\right) ^{4} \\
&&+O\left( \lambda ^{-4}\left( \left\vert x\right\vert ^{2}+\lambda
^{2}\right) ^{4}\left\vert x\right\vert \right) +O\left( \lambda ^{-\frac{8n%
}{n+4}}\left( \left\vert x\right\vert ^{2}+\lambda ^{2}\right) ^{\frac{8n}{%
n+4}}\right) \\
&&\left. +O\left( \lambda ^{-\frac{8n}{n+4}}\left( \left\vert x\right\vert
^{2}+\lambda ^{2}\right) ^{\frac{8n}{n+4}}\left\vert x\right\vert ^{\frac{2n%
}{n+4}}\right) \right] .
\end{eqnarray*}%
A straightforward calculation shows%
\begin{eqnarray}
&&\int_{M}\left\vert P\varphi _{\lambda }\right\vert ^{\frac{2n}{n+4}}d\mu
\label{eq3.42} \\
&=&\left( n\left( n+2\right) \left( n-2\right) \left( n-4\right) \right) ^{%
\frac{2n}{n+4}}\frac{\pi ^{\frac{n}{2}}\Gamma \left( \frac{n}{2}\right) }{%
\left( n-1\right) !}\cdot  \notag \\
&&\left( 1-\frac{1}{3}\frac{n^{2}-4n-4}{\left( n+2\right) \left( n+4\right)
\left( n-2\right) \left( n-6\right) \left( n-8\right) }\left\vert W\left(
p\right) \right\vert ^{2}\lambda ^{4}+o\left( \lambda ^{4}\right) \right) . 
\notag
\end{eqnarray}%
Hence%
\begin{eqnarray}
&&\left\Vert P\varphi _{\lambda }\right\Vert _{L^{\frac{2n}{n+4}}}^{2}
\label{eq3.43} \\
&=&\left( n\left( n+2\right) \left( n-2\right) \left( n-4\right) \right) ^{2}%
\frac{\pi ^{\frac{n+4}{2}}\Gamma \left( \frac{n}{2}\right) ^{\frac{n+4}{n}}}{%
\left( \left( n-1\right) !\right) ^{\frac{n+4}{n}}}\cdot  \notag \\
&&\left( 1-\frac{1}{3}\frac{n^{2}-4n-4}{n\left( n+2\right) \left( n-2\right)
\left( n-6\right) \left( n-8\right) }\left\vert W\left( p\right) \right\vert
^{2}\lambda ^{4}+o\left( \lambda ^{4}\right) \right) .  \notag
\end{eqnarray}%
On the other hand,%
\begin{eqnarray}
&&\int_{M}P\varphi _{\lambda }\cdot \varphi _{\lambda }d\mu  \label{eq3.44}
\\
&=&n\left( n+2\right) \left( n-2\right) \left( n-4\right) \frac{\pi ^{\frac{n%
}{2}}\Gamma \left( \frac{n}{2}\right) }{\left( n-1\right) !}\cdot  \notag \\
&&\left( 1-\frac{n^{2}-4n-4}{6n\left( n+2\right) \left( n-2\right) \left(
n-6\right) \left( n-8\right) }\left\vert W\left( p\right) \right\vert
^{2}\lambda ^{4}+o\left( \lambda ^{4}\right) \right) .  \notag
\end{eqnarray}%
Summing up we get%
\begin{eqnarray}
&&\frac{\int_{M}P\varphi _{\lambda }\cdot \varphi _{\lambda }d\mu }{%
\left\Vert P\varphi _{\lambda }\right\Vert _{L^{\frac{2n}{n+4}}}^{2}}
\label{eq3.45} \\
&=&\Theta _{4}\left( S^{n}\right) \left( 1+\frac{n^{2}-4n-4}{6n\left(
n+2\right) \left( n-2\right) \left( n-6\right) \left( n-8\right) }\left\vert
W\left( p\right) \right\vert ^{2}\lambda ^{4}+o\left( \lambda ^{4}\right)
\right) .  \notag
\end{eqnarray}%
It follows that $\Theta _{4}\left( g\right) >\Theta _{4}\left( S^{n}\right) $%
.

Next we turn to the regularity issue for maximizers of $\Theta _{4}\left(
g\right) $ in (\ref{eq1.16}). Assume $f\in L^{\frac{2n}{n+4}}\left( M\right) 
$, $f\geq 0$ and not identically zero, and it is a maximizer for $\Theta
_{4}\left( g\right) $, then after scaling we have%
\begin{equation}
G_{P}f=\frac{2}{n-4}f^{\frac{n-4}{n+4}}.  \label{eq3.46}
\end{equation}%
Note that this equation is critical in the sense that if we start with $f\in
L^{\frac{2n}{n+4}}$ and use the equation, the usual bootstrap method simply
ends with $f\in L^{\frac{2n}{n+4}}$ again. Approaches in deriving further
regularity for such kind of equations has been well understood (see for
example \cite{DHL,ER,R,V} and so on). Here is a regularity result
particularly tailored for our purpose, we refer the readers to \cite%
{DHL,ER,R,V} for detailed proofs.

\begin{lemma}
\label{lem3.1}Assume $\left( M,g\right) $ is a smooth compact $n$
dimensional Riemannian manifold with $n\geq 5$, $Y\left( g\right) >0$, $%
Q\geq 0$ and not identically zero, $f\in L^{\frac{2n}{n+4}}\left( M\right) $%
, $f\geq 0$ and not identically zero, and it satisfies (\ref{eq3.46}), then $%
f\in C^{\infty }\left( M\right) $, $f>0$.
\end{lemma}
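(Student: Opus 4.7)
The plan is to use a Brezis--Kato style bootstrap on the integral equation. Set $v=G_{P}f$, so $v\in W^{4,\frac{2n}{n+4}}(M)$ satisfies $Pv=f$, and (\ref{eq3.46}) becomes $v=\frac{2}{n-4}f^{(n-4)/(n+4)}$, equivalently
\begin{equation*}
f=c\,v^{(n+4)/(n-4)},\qquad c=\left(\frac{n-4}{2}\right)^{(n+4)/(n-4)}.
\end{equation*}
Because $G_{P}>0$ off the diagonal and $f\geq 0$ is not identically zero, the representation $v(x)=\int_{M}G_{P}(x,y)f(y)\,d\mu(y)$ forces $v>0$ on all of $M$. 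Consequently it suffices to prove $v\in C^{\infty}(M)$; positivity of $f$ will then follow from $f=cv^{(n+4)/(n-4)}$.

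From the pole asymptotic $G_{P}(x,y)\sim c_{n}\,d(x,y)^{4-n}$ provided by Proposition \ref{prop2.2} and the Hardy--Littlewood--Sobolev inequality, the assumption $f\in L^{2n/(n+4)}(M)$ yields $v\in L^{2n/(n-4)}(M)$. Rewriting the equation as
\begin{equation*}
v(x)=\int_{M}G_{P}(x,y)V(y)v(y)\,d\mu(y),\qquad V:=c\,v^{8/(n-4)},
\end{equation*}
we find $V\in L^{n/4}(M)$; this is the endpoint regularity at which a direct HLS iteration produces no gain in integrability. This critical obstruction is the main obstacle.

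The standard device to break the endpoint is a truncation argument: for $L\gg 1$ write $V=V_{L}+V^{L}$ with $V^{L}=V\chi_{\{V\leq L\}}\in L^{\infty}$ and $\|V_{L}\|_{L^{n/4}}$ arbitrarily small. The endpoint form of HLS shows that the linear operator $T_{W}w(x):=\int_{M}G_{P}(x,y)W(y)w(y)\,d\mu(y)$ is bounded on $L^{q}(M)$ for every $1<q<\infty$ with operator norm $\lesssim\|W\|_{L^{n/4}}$, so choosing $L$ large makes $T_{V_{L}}$ a strict contraction on $L^{q}$ for any prescribed pair of exponents $q\in\{q_{0},q_{1}\}$. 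Rewriting the equation as $(I-T_{V_{L}})v=T_{V^{L}}v$ and exploiting the gain of integrability produced by $T_{V^{L}}$ (since $V^{L}\in L^{\infty}$), one iterates starting from $q_{0}=2n/(n-4)$ and concludes $v\in L^{q}(M)$ for every $q<\infty$; the mechanism is carried out in detail in \cite{DHL,ER,R,V}.

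Once $v\in L^{\infty}(M)$, we have $f=cv^{(n+4)/(n-4)}\in L^{\infty}$, and hence $v=G_{P}f\in W^{4,q}(M)$ for all $q<\infty$, so $v\in C^{3,\alpha}(M)$ by Sobolev embedding. Positivity of $v$ ensures that $t\mapsto t^{(n+4)/(n-4)}$ is $C^{\infty}$ at each value $v(x)$, so $f\in C^{3,\alpha}(M)$. A standard elliptic bootstrap for the smooth-coefficient fourth order operator $P$ then upgrades $v$, and therefore $f$, to $C^{\infty}(M)$; positivity of $f$ is immediate from $f=cv^{(n+4)/(n-4)}$ and $v>0$.
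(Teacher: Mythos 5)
Your proposal is correct and follows precisely the standard truncation/bootstrap (Brezis--Kato / Van der Vorst) mechanism that the paper, rather than writing out, delegates to \cite{DHL,ER,R,V}; the one minor slip is that $T_{W}$ with $W\in L^{n/4}$ is bounded on $L^{q}$ only for $q>\frac{n}{n-4}$ (so that the product exponent exceeds $1$), not for all $1<q<\infty$, but since the iteration starts at $q_{0}=\frac{2n}{n-4}>\frac{n}{n-4}$ and moves upward this does not affect the argument.
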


Now we have all the ingredients to prove Theorem \ref{thm1.3}. Theorem \ref%
{thm1.1} clearly follows from Theorem \ref{thm1.3}.

\begin{proof}[Proof of Theorem 1.3]
If $\left( M,g\right) $ is conformal equivalent to the standard sphere, then
everything follows from discussions in Section \ref{sec2.2}. From now on we
assume that $\left( M,g\right) $ is not conformally equivalent to the
standard sphere. By Proposition \ref{prop3.1} we know that $\Theta
_{4}\left( g\right) >\Theta _{4}\left( S^{n}\right) $. \cite[Proposition 1.1]%
{HY4} tells us $\ker P=0$ and $G_{P}>0$. By Proposition \ref{prop2.1} we
know the set%
\begin{equation*}
\mathcal{M}=\left\{ f\in L^{\frac{2n}{n+4}}\left( M\right) :\left\Vert
f\right\Vert _{L^{\frac{2n}{n+4}}\left( M\right) }=1,\int_{M}G_{P}f\cdot
fd\mu =\Theta _{4}\left( g\right) \right\}
\end{equation*}%
is nonempty and compact in $L^{\frac{2n}{n+4}}\left( M\right) $. If $f\in 
\mathcal{M}$, we can assume $f^{+}\neq 0$, then $f^{-}$ must be equal to
zero. Indeed%
\begin{eqnarray*}
&&\Theta _{4}\left( g\right) \\
&=&\int_{M}G_{P}f\cdot fd\mu \\
&=&\int_{M}\left( G_{P}f^{+}\cdot f^{+}-2G_{P}f^{+}\cdot
f^{-}+G_{P}f^{-}\cdot f^{-}\right) d\mu \\
&\leq &\int_{M}G_{P}\left\vert f\right\vert \cdot \left\vert f\right\vert
d\mu \\
&\leq &\Theta _{4}\left( g\right) .
\end{eqnarray*}%
Hence $\int_{M}G_{P}f^{+}\cdot f^{-}d\mu =0$. Using the fact that $G_{P}>0$
and $f^{+}\neq 0$, we see $f^{-}=0$. In another word, $f$ does not change
sign. It follows from Lemma \ref{lem3.1} that $f\in C^{\infty }\left(
M\right) $ and $f>0$. Moreover the compactness of $\mathcal{M}$ under $%
C^{\infty }\left( M\right) $ topology follows from its compactness in $L^{%
\frac{2n}{n+4}}\left( M\right) $ and the proofs of Lemma \ref{lem3.1} in 
\cite{DHL,ER,R,V}.
\end{proof}

\section{Some discussions\label{sec4}}

Here we turn to the variational problem (\ref{eq1.13}).

\begin{proposition}
\label{prop4.1}Let $\left( M,g\right) $ be a smooth compact $n$ dimensional
Riemannian manifold with $n\geq 5$, $Y\left( g\right) >0$, $Q\geq 0$ and not
identically zero, then

\begin{enumerate}
\item $Y_{4}\left( g\right) \leq Y_{4}\left( S^{n}\right) $, here $S^{n}$
has the standard metric. $Y_{4}\left( g\right) =Y_{4}\left( S^{n}\right) $
if and only if $\left( M,g\right) $ is conformally diffeomorphic to the
standard sphere.

\item $Y_{4}\left( g\right) $ is always achieved. Let%
\begin{equation}
\mathcal{M}_{P}=\left\{ u\in H^{2}\left( M\right) :\left\Vert u\right\Vert
_{L^{\frac{2n}{n-4}}\left( M\right) }=1\text{ and }E\left( u\right)
=Y_{4}\left( g\right) \right\} ,  \label{eq4.1}
\end{equation}%
then $\mathcal{M}_{P}$ is not empty. For any $\alpha \in \left( 0,1\right) $%
, $\mathcal{M}_{P}\subset C^{4,\alpha }\left( M\right) $ and when $\left(
M,g\right) $ is not conformally diffeomorphic to the standard sphere, $%
\mathcal{M}_{P}$ is compact under $C^{4,\alpha }$ topology.
\end{enumerate}
\end{proposition}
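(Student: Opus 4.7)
The plan is to derive Proposition 4.1 from Theorem 1.3 by exploiting the identities between $Y_4(g)$, $Y_4^+(g)$, and $\Theta_4(g)$ established in Section 2, and then to invoke a concentration-compactness argument for the direct variational problem. First, Theorem 1.3 provides a positive smooth maximizer for $\Theta_4(g)$, so Remark 2.1 and Lemma 2.2 give $Y_4^+(g) = 1/\Theta_4(g)$. Combined with the trivial inequality $Y_4(g) \leq Y_4^+(g)$ and Theorem 1.3(1), this yields the chain
\begin{equation*}
Y_4(g) \leq Y_4^+(g) = \frac{1}{\Theta_4(g)} \leq \frac{1}{\Theta_4(S^n)} = Y_4(S^n).
\end{equation*}
Equality throughout forces $\Theta_4(g) = \Theta_4(S^n)$, which by Theorem 1.3(1) characterizes $(M,g)$ as conformally diffeomorphic to the standard sphere; the converse on $S^n$ being classically realized by the bubbles $u_\lambda$ from Section 2.2.

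For existence of a minimizer when $(M,g)$ is not conformally standard, the chain above gives the strict inequality $Y_4(g) < Y_4(S^n)$. Take a minimizing sequence $u_i$ with $\|u_i\|_{L^{2n/(n-4)}} = 1$ and $E(u_i) \to Y_4(g)$. Since $E$ is coercive in $\|\Delta u\|_{L^2}^2$ modulo a relatively compact perturbation in $H^1$ and $L^2$, the sequence is bounded in $H^2$; pass to a subsequence with $u_i \rightharpoonup u$ weakly. A concentration-compactness statement for the Sobolev embedding $H^2 \hookrightarrow L^{2n/(n-4)}$, parallel to Lemma 2.4, yields a decomposition $|u_i|^{2n/(n-4)} d\mu \rightharpoonup |u|^{2n/(n-4)} d\mu + \sum_j \sigma_j \delta_{p_j}$ together with a concentration bulk in $\liminf \|\Delta u_i\|_{L^2}^2$ bounded below by $Y_4(S^n) \sum_j \sigma_j^{(n-4)/n}$. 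Writing $y = \|u\|_{L^{2n/(n-4)}}^{2n/(n-4)}$, so that $y + \sum \sigma_j = 1$, the subadditivity $y^{(n-4)/n} + \sum \sigma_j^{(n-4)/n} \geq 1$ combined with $Y_4(g) < Y_4(S^n)$ forces all $\sigma_j = 0$. Hence $u_i \to u$ strongly in $L^{2n/(n-4)}$, so $\|u\|_{L^{2n/(n-4)}} = 1$ and $E(u) \leq \liminf E(u_i) = Y_4(g)$, giving $u \in \mathcal{M}_P$.

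Regularity and compactness follow from standard arguments. Each $u \in \mathcal{M}_P$ satisfies the Euler–Lagrange equation $Pu = Y_4(g) |u|^{8/(n-4)} u$, and the elliptic bootstrap used in \cite{DHL,ER,R,V} gives $u \in C^{\infty} \subset C^{4,\alpha}$. For the $C^{4,\alpha}$ compactness of $\mathcal{M}_P$, repeat the concentration-compactness argument on an arbitrary sequence in $\mathcal{M}_P$ to obtain strong $L^{2n/(n-4)}$ convergence, and then upgrade through the Euler–Lagrange equation and bootstrap.

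The main subtlety is that $Y(g) > 0$ and $Q \geq 0$ do not a priori imply $Y_4(g) > 0$, since positivity of the Paneitz operator is not automatic under these hypotheses, so we cannot appeal to arguments requiring positivity. Fortunately, the concentration-compactness step goes through regardless of the sign of $Y_4(g)$: when $Y_4(g) \leq 0$ the inequality $Y_4(g) \geq Y_4(g) y^{(n-4)/n} + Y_4(S^n) \sum \sigma_j^{(n-4)/n}$ still rules out concentration because $Y_4(g) y^{(n-4)/n} \geq Y_4(g)$ (as $y^{(n-4)/n} \leq 1$) and $Y_4(S^n) > 0$. This is precisely where the identity $Y_4^+(g) = 1/\Theta_4(g)$ coming from Theorem 1.3, rather than a direct positivity argument for $P$, is essential.
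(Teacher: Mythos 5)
Your approach is essentially the paper's: deduce $Y_{4}(g)<Y_{4}(S^{n})$ from the $\Theta_{4}$ inequality of Section 3 via Lemma~\ref{lem2.2}, and then invoke a concentration-compactness argument for the direct variational problem (which is exactly the content of the paper's Lemma~\ref{lem4.1}, cited from \cite{DHL,He}). The chain of inequalities, the handling of the equality case, and the existence/regularity/compactness conclusions all match the paper's proof.

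One citation is inaccurate, though it does not break the argument. You write that ``Remark 2.1 and Lemma 2.2 give $Y_{4}^{+}(g)=1/\Theta_{4}(g)$.'' Remark~\ref{rmk2.1} \emph{assumes} $Y_{4}^{+}(g)\Theta_{4}(g)=1$ as a hypothesis rather than proving it, and Lemma~\ref{lem2.2} only gives $Y_{4}^{+}(g)\Theta_{4}(g)\leq 1$ in general (the equality requires $Y_{4}^{+}(g)$ to be achieved, which is not available at this stage). Fortunately your chain only uses the inequality $Y_{4}(g)\leq Y_{4}^{+}(g)\leq 1/\Theta_{4}(g)$, so the proof goes through; the paper writes the chain this way. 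Relatedly, your closing claim that the identity $Y_{4}^{+}(g)=1/\Theta_{4}(g)$ is ``essential'' to handle $Y_{4}(g)\leq 0$ overstates the situation: what is actually needed is only the strict inequality $Y_{4}(g)<Y_{4}(S^{n})$, obtained from $Y_{4}(g)\leq 1/\Theta_{4}(g)<1/\Theta_{4}(S^{n})$. Your observation that the concentration-compactness step itself is insensitive to the sign of $Y_{4}(g)$ is correct and is implicit in the paper's Lemma~\ref{lem4.1}, which carries no sign hypothesis.
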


We start with the following standard fact (see \cite{DHL,He}).

\begin{lemma}
\label{lem4.1}Let%
\begin{equation*}
\mathcal{M}_{P}=\left\{ u\in H^{2}\left( M\right) :\left\Vert u\right\Vert
_{L^{\frac{2n}{n-4}}\left( M\right) }=1\text{ and }E\left( u\right)
=Y_{4}\left( g\right) \right\} .
\end{equation*}%
If $Y_{4}\left( g\right) <Y_{4}\left( S^{n}\right) $, then $\mathcal{M}_{P}$
is nonempty. Moreover for any $\alpha \in \left( 0,1\right) $, $\mathcal{M}%
_{P}\subset C^{4,\alpha }\left( M\right) $ and it is compact in $C^{4,\alpha
}$ topology.
\end{lemma}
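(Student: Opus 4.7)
The plan is to combine the direct method of the calculus of variations with a concentration--compactness analysis dual to Lemma \ref{lem2.4}, and then bootstrap regularity from the Euler--Lagrange equation. First I would pick a minimizing sequence $u_{i}\in H^{2}\left( M\right) $ with $\left\Vert u_{i}\right\Vert _{L^{\frac{2n}{n-4}}}=1$ and $E\left( u_{i}\right) \rightarrow Y_{4}\left( g\right) $. Because $E\left( u\right) $ differs from $\left\Vert \Delta u\right\Vert _{L^{2}}^{2}$ only by terms that are continuous on $H^{1}$, and because $H^{2}\hookrightarrow H^{1}$ is compact, standard interpolation plus the strict positivity of the Yamabe invariant yields a uniform $H^{2}$ bound on $u_{i}$. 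Pass to a subsequence with $u_{i}\rightharpoonup u$ weakly in $H^{2}$, strongly in $H^{1}$, and pointwise a.e.

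Next I would invoke the ``min'' form of the concentration--compactness principle: the defect measures satisfy
\begin{equation*}
\left\vert u_{i}\right\vert ^{\frac{2n}{n-4}}d\mu \rightharpoonup \left\vert u\right\vert ^{\frac{2n}{n-4}}d\mu +\sum_{j}\sigma _{j}\delta _{p_{j}},\qquad \left\vert \Delta u_{i}\right\vert ^{2}d\mu \rightharpoonup \left\vert \Delta u\right\vert ^{2}d\mu +\sum_{j}\nu _{j}\delta _{p_{j}},
\end{equation*}
with $\nu _{j}\geq Y_{4}\left( S^{n}\right) \sigma _{j}^{\frac{n-4}{n}}$. Combining with the lower order terms in $E$, which are continuous under strong $H^{1}$ convergence, one obtains
\begin{equation*}
Y_{4}\left( g\right) =\lim_{i\rightarrow \infty }E\left( u_{i}\right) \geq E\left( u\right) +\sum_{j}\nu _{j}\geq Y_{4}\left( g\right) \left\Vert u\right\Vert _{L^{\frac{2n}{n-4}}}^{2}+Y_{4}\left( S^{n}\right) \sum_{j}\sigma _{j}^{\frac{n-4}{n}},
\end{equation*}
while the normalization gives $\left\Vert u\right\Vert _{L^{\frac{2n}{n-4}}}^{\frac{2n}{n-4}}+\sum_{j}\sigma _{j}=1$. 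Using the elementary inequality $\left( \sum a_{j}\right) ^{\frac{n-4}{n}}\leq \sum a_{j}^{\frac{n-4}{n}}$ and the strict gap $Y_{4}\left( g\right) <Y_{4}\left( S^{n}\right) $, one concludes that no atoms can form, i.e.\ $\sigma _{j}=0$ for all $j$. Hence $u_{i}\rightarrow u$ strongly in $L^{\frac{2n}{n-4}}$, $\left\Vert u\right\Vert _{L^{\frac{2n}{n-4}}}=1$, and lower semicontinuity forces $E\left( u\right) =Y_{4}\left( g\right) $, so $u\in \mathcal{M}_{P}$.

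For regularity, any $u\in \mathcal{M}_{P}$ solves $Pu=Y_{4}\left( g\right) \left\vert u\right\vert ^{\frac{8}{n-4}}u$. A Brezis--Kato style iteration, using $W^{4,p}$ estimates for $P$ and the fact that the critical coefficient $\left\vert u\right\vert ^{\frac{8}{n-4}}$ has small $L^{n/4}$ norm on sufficiently small balls, upgrades $u\in L^{\frac{2n}{n-4}}$ to $u\in L^{p}$ for every $p<\infty $; Calder\'{o}n--Zygmund then gives $u\in W^{4,p}$, and Schauder estimates yield $u\in C^{4,\alpha }\left( M\right) $ for every $\alpha \in \left( 0,1\right) $. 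For compactness, apply the same concentration--compactness argument to an arbitrary sequence in $\mathcal{M}_{P}$ to obtain an $L^{\frac{2n}{n-4}}$-limit in $\mathcal{M}_{P}$; since the Brezis--Kato scheme is quantitative in the ambient $L^{\frac{2n}{n-4}}$ norm, it yields a uniform $C^{4,\alpha }$ bound, whence Arzel\`{a}--Ascoli gives convergence in $C^{4,\alpha ^{\prime }}$ for every $\alpha ^{\prime }<\alpha $.

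The main obstacle is that $\frac{2n}{n-4}$ is the critical Sobolev exponent, so a naive bootstrap stalls immediately; the Brezis--Kato truncation argument (absorbing the nonlinear term using smallness of $\left\Vert u\right\Vert _{L^{\frac{2n}{n-4}}}$ on small balls) is what makes the regularity step work, and ensuring the correct sign of the atomic inequality $\nu _{j}\geq Y_{4}\left( S^{n}\right) \sigma _{j}^{\frac{n-4}{n}}$ (as opposed to the reverse inequality used in Lemma \ref{lem2.4}) is the subtlety in adapting the concentration--compactness step to the minimization problem.
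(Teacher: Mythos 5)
The paper does not prove this lemma; it simply cites it as ``standard'' from \cite{DHL,He}, so there is no in-paper argument to compare against. Your blind proof correctly reconstructs the standard argument from that literature: direct method plus the Lions concentration--compactness lemma for the embedding $H^{2}\hookrightarrow L^{\frac{2n}{n-4}}$ to rule out atoms under the strict gap $Y_{4}\left( g\right) <Y_{4}\left( S^{n}\right) $, followed by a Brezis--Kato (Moser-type) iteration to push critical $L^{\frac{2n}{n-4}}$ regularity up to $C^{4,\alpha}$, and a uniform version of the same scheme for compactness of $\mathcal{M}_{P}$. You also correctly identify the two genuinely delicate points: the atomic inequality must read $\nu _{j}\geq Y_{4}\left( S^{n}\right) \sigma _{j}^{\frac{n-4}{n}}$ (the minimization direction, not the maximization direction of Lemma \ref{lem2.4}), and the bootstrap stalls at the critical exponent unless one exploits uniform small-ball integrability of $\left\vert u_{i}\right\vert ^{\frac{2n}{n-4}}$, which comes from the strong $L^{\frac{2n}{n-4}}$ convergence produced by the no-atoms step.

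Two small inaccuracies worth tightening, neither of which breaks the argument. First, the uniform $H^{2}$ bound on a minimizing sequence does not use positivity of $Y\left( g\right) $ (indeed that is not a hypothesis of the lemma); it follows from $\left\Vert u_{i}\right\Vert _{L^{\frac{2n}{n-4}}}=1$, the compactness of $M$ (so $\left\Vert u_{i}\right\Vert _{L^{2}}\leq C$), and the interpolation $\left\Vert \nabla u\right\Vert _{L^{2}}^{2}\leq \varepsilon \left\Vert \Delta u\right\Vert _{L^{2}}^{2}+C\left( \varepsilon \right) \left\Vert u\right\Vert _{L^{2}}^{2}$ to absorb the lower-order terms in $E$. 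Second, in the no-atoms step you should make explicit that the case $Y_{4}\left( g\right) \leq 0$ is also handled: if $\sum_{j}\sigma _{j}>0$ then, writing $a=\left\Vert u\right\Vert _{L^{\frac{2n}{n-4}}}^{\frac{2n}{n-4}}$, the chain gives $Y_{4}\left( g\right) \geq Y_{4}\left( g\right) a^{\frac{n-4}{n}}+Y_{4}\left( S^{n}\right) \bigl( \sum_{j}\sigma _{j}\bigr) ^{\frac{n-4}{n}}$; since $a^{\frac{n-4}{n}}\leq 1$ and $Y_{4}\left( g\right) \leq 0$ force $Y_{4}\left( g\right) a^{\frac{n-4}{n}}\geq Y_{4}\left( g\right) $, while $Y_{4}\left( S^{n}\right) >0$, this is already a contradiction without needing $Y_{4}\left( g\right) >0$; when $Y_{4}\left( g\right) >0$ one instead uses concavity of $t\mapsto t^{\frac{n-4}{n}}$ together with $a+\sum_{j}\sigma _{j}=1$ and the hypothesis $Y_{4}\left( g\right) <Y_{4}\left( S^{n}\right) $. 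With these clarifications the proof is complete.
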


\begin{proof}[Proof of Proposition \protect\ref{prop4.1}]
If $\left( M,g\right) $ is conformal equivalent to the standard sphere, then
the conclusion follows from discussions in Section \ref{sec2.2}. Assume $%
\left( M,g\right) $ is not conformal equivalent to the standard sphere, then
it follows from Lemma \ref{lem2.2} and Proposition \ref{prop3.1} that%
\begin{equation*}
Y_{4}\left( g\right) \leq \frac{1}{\Theta _{4}\left( g\right) }<\frac{1}{%
\Theta _{4}\left( S^{n}\right) }=Y_{4}\left( S^{n}\right) .
\end{equation*}%
Here we want to point out that the fact $Y_{4}\left( g\right) <Y_{4}\left(
S^{n}\right) $ can be verified, with the help of positive mass theorem for
Paneitz operator (\cite{HuR,GM,HY4}), by choosing a particular test function
in (\ref{eq1.13}) (see \cite{ER,R,GM}). In fact the corresponding
calculation is easier than what we have in the proof of Proposition \ref%
{prop3.1}, but the statement in Proposition \ref{prop3.1} is stronger. By
Lemma \ref{lem4.1}, we know $\mathcal{M}_{P}$ is nonempty and $\mathcal{M}%
_{P}\subset C^{4,\alpha }\left( M\right) $ and it is compact in $C^{4,\alpha
}\left( M\right) $ for any $\alpha \in \left( 0,1\right) $.
\end{proof}

\begin{proposition}
\label{prop4.2}Let $\left( M,g\right) $ be a smooth compact $n$ dimensional
Riemannian manifold with $n\geq 5$, $Y\left( g\right) >0$, $Y_{4}\left(
g\right) >0$, $Q\geq 0$ and not identically zero. Denote%
\begin{equation*}
\mathcal{M}_{P}=\left\{ u\in H^{2}\left( M\right) :\left\Vert u\right\Vert
_{L^{\frac{2n}{n-4}}\left( M\right) }=1\text{ and }E\left( u\right)
=Y_{4}\left( g\right) \right\}
\end{equation*}%
and%
\begin{equation*}
\mathcal{M}_{\Theta }=\left\{ u\in W^{4,\frac{2n}{n+4}}\left( M\right)
:\left\Vert u\right\Vert _{L^{\frac{2n}{n-4}}\left( M\right) }=1\text{ and }%
\frac{E\left( u\right) }{\left\Vert Pu\right\Vert _{L^{\frac{2n}{n+4}}}^{2}}%
=\Theta _{4}\left( g\right) \right\} .
\end{equation*}%
then

\begin{enumerate}
\item $\mathcal{M}_{P}\subset C^{\infty }\left( M\right) $ and for any $u\in 
\mathcal{M}_{P}$, either $u>0$ or $-u>0$.

\item $Y_{4}\left( g\right) \Theta _{4}\left( g\right) =1$.

\item $\mathcal{M}_{P}=\mathcal{M}_{\Theta }$.
\end{enumerate}
\end{proposition}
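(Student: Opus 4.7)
My plan is to prove (1), (2), (3) in order, with the positivity of the Green's function $G_{P}>0$ (ensured by \cite{HY4} under the standing hypotheses on $Y(g)$ and $Q$) as the pivotal tool.

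For part (1), let $u\in\mathcal{M}_{P}$. Testing the Euler--Lagrange equation with $u$ and using $\|u\|_{L^{2n/(n-4)}}=1$ yields $Pu=Y_{4}(g)|u|^{p-1}u$ with $p=(n+4)/(n-4)$. To rule out sign changes, set $f=Pu$ and $v=G_{P}|f|$. Writing $f=f^{+}-f^{-}$ and using $G_{P}>0$,
\[
v-u=G_{P}(2f^{-})\geq 0,\qquad v+u=G_{P}(2f^{+})\geq 0,
\]
so $v\geq|u|$. If $u$ changed sign, both $f^{\pm}$ would be nontrivial and strict positivity of $G_{P}$ would force $v>|u|$ everywhere, hence $\|v\|_{L^{2n/(n-4)}}>1$. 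On the other hand $Pv=|f|=Y_{4}(g)|u|^{p}$ and Holder's inequality give
\[
E(v)=Y_{4}(g)\int_{M}|u|^{p}v\,d\mu\leq Y_{4}(g)\,\||u|^{p}\|_{L^{2n/(n+4)}}\|v\|_{L^{2n/(n-4)}}=Y_{4}(g)\|v\|_{L^{2n/(n-4)}},
\]
so $E(v)/\|v\|_{L^{2n/(n-4)}}^{2}<Y_{4}(g)$, contradicting the infimum. Hence $u$ has constant sign; assuming $u\geq 0$, the representation $u=G_{P}(Y_{4}(g)u^{p})$ together with $G_{P}>0$ and $u\not\equiv 0$ gives $u>0$ pointwise. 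A rescaling $u=\kappa w$ with $Y_{4}(g)\kappa^{p-1}=((n-4)/2)^{p}$ converts the equation into the form treated by Lemma \ref{lem3.1}, so $u\in C^{\infty}(M)$.

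Part (2) is then a direct consequence: the positive smooth $u$ is admissible in \eqref{eq2.4}, giving $Y_{4}^{+}(g)\leq E(u)=Y_{4}(g)$; combined with \eqref{eq2.5} this forces $Y_{4}^{+}(g)=Y_{4}(g)$, and $u$ realizes $Y_{4}^{+}(g)$. Lemma \ref{lem2.2} now produces $Y_{4}(g)\Theta_{4}(g)=Y_{4}^{+}(g)\Theta_{4}(g)=1$. For part (3), $\mathcal{M}_{P}\subseteq\mathcal{M}_{\Theta}$ follows by direct computation: positivity and the Euler--Lagrange equation give $\|Pu\|_{L^{2n/(n+4)}}=Y_{4}(g)\|u^{p}\|_{L^{2n/(n+4)}}=Y_{4}(g)$, so $E(u)/\|Pu\|_{L^{2n/(n+4)}}^{2}=1/Y_{4}(g)=\Theta_{4}(g)$. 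Conversely, for $u\in\mathcal{M}_{\Theta}$ with $\|u\|_{L^{2n/(n-4)}}=1$, Holder yields $E(u)\leq\|Pu\|_{L^{2n/(n+4)}}$ while the definition of $Y_{4}(g)$ gives $E(u)\geq Y_{4}(g)$; together with $E(u)=\Theta_{4}(g)\|Pu\|_{L^{2n/(n+4)}}^{2}$ and $Y_{4}(g)\Theta_{4}(g)=1$, both inequalities collapse to equalities, so $E(u)=Y_{4}(g)$ and $u\in\mathcal{M}_{P}$.

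The main obstacle is the sign-change step in part (1): the quadratic form $E$ cannot be shown to decrease under $u\mapsto|u|$ directly (the Paneitz operator lacks a maximum principle), so the argument must be routed through the integral operator $G_{P}$ and relies on both the strict pointwise positivity $G_{P}>0$ and the sharp Holder estimate for $E(v)$ that produces a competitor with strictly smaller Rayleigh quotient. Once sign-definiteness is in hand, parts (2) and (3) are routine bookkeeping using the inequalities already developed in Lemmas \ref{lem2.1}--\ref{lem2.2}.
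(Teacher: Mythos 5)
Your argument is correct; parts (1) and (2) essentially reproduce the paper's route (the sign-definiteness in (1) is exactly the observation from \cite{R} of comparing $u$ against $v=G_{P}(|Pu|)$, merely reorganized as a strict-inequality contradiction rather than an equality chain; (2) is the same computation routed through Lemma~\ref{lem2.2}). Where you genuinely diverge is the inclusion $\mathcal{M}_{\Theta}\subseteq\mathcal{M}_{P}$ in part (3). The paper invokes Theorem~\ref{thm1.3} to conclude that $f=Pu$ is a smooth, sign-definite maximizer of $\Theta_{4}(g)$ and then reconstructs the Lagrange multiplier. You instead run a norm chase: from $\|u\|_{L^{2n/(n-4)}}=1$ one has $E(u)\geq Y_{4}(g)$ by definition, $E(u)\leq\|Pu\|_{L^{2n/(n+4)}}$ by H\"older, and $E(u)=\Theta_{4}(g)\|Pu\|_{L^{2n/(n+4)}}^{2}$ by membership in $\mathcal{M}_{\Theta}$; combined with $Y_{4}(g)\Theta_{4}(g)=1$ from (2) these force $E(u)=Y_{4}(g)$. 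This avoids Theorem~\ref{thm1.3} altogether and defers all regularity/positivity of $u$ to part (1) once $u\in\mathcal{M}_{P}$ is known, which is tidier. Two small points of hygiene: the rescaling constant matching \eqref{eq1.12} should satisfy $Y_{4}(g)\kappa^{p-1}=\tfrac{n-4}{2}$ rather than $\bigl(\tfrac{n-4}{2}\bigr)^{p}$; and since $H^{2}(M)$ does not embed in $C^{0}(M)$ for $n\geq 5$, the intermediate claim $u>0$ should be read almost everywhere, with pointwise positivity and continuity only emerging after Lemma~\ref{lem3.1} is applied. Also worth stating is that the nonemptiness of $\mathcal{M}_{P}$, used implicitly in (2), comes from Proposition~\ref{prop4.1}.
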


\begin{proof}
By Proposition \ref{prop4.1} we know $\mathcal{M}_{P}$ is nonempty and for
any $\alpha \in \left( 0,1\right) $, $\mathcal{M}_{P}\subset C^{4,\alpha
}\left( M\right) $. By \cite[Proposition 1.1]{HY4} we know $G_{P}>0$. Assume 
$u\in \mathcal{M}_{P}$, without losing of generality we can assume $%
u^{+}\neq 0$. Now we will use an observation in \cite{R} to show $u>0$. In
fact $u$ satisfies $\left\Vert u\right\Vert _{L^{\frac{2n}{n-4}}}=1$ and%
\begin{equation*}
Pu=Y_{4}\left( g\right) \left\vert u\right\vert ^{\frac{8}{n-4}}u.
\end{equation*}%
Let $v=G_{P}\left( \left\vert Pu\right\vert \right) $, then $v\in
C^{4,\alpha }\left( M\right) $, $v>0$ and $\left\vert u\right\vert \leq v$.
We have%
\begin{equation*}
Y_{4}\left( g\right) \leq \frac{E\left( v\right) }{\left\Vert v\right\Vert
_{L^{\frac{2n}{n-4}}}^{2}}=Y_{4}\left( g\right) \frac{\int_{M}\left\vert
u\right\vert ^{\frac{n+4}{n-4}}vd\mu }{\left\Vert v\right\Vert _{L^{\frac{2n%
}{n-4}}}^{2}}\leq Y_{4}\left( g\right) \left\Vert v\right\Vert _{L^{\frac{2n%
}{n-4}}}^{-1}\leq Y_{4}\left( g\right) .
\end{equation*}%
Hence all the inequalities become equalities. In particular $\left\Vert
v\right\Vert _{L^{\frac{2n}{n-4}}}=1=\left\Vert u\right\Vert _{L^{\frac{2n}{%
n-4}}}$. Since $v\geq \left\vert u\right\vert $, we see $v=\left\vert
u\right\vert $. This together with $u^{+}\neq 0$ implies $u=v>0$. Standard
bootstrap method shows $u\in C^{\infty }\left( M\right) $. Hence $\mathcal{M}%
_{P}\subset C^{\infty }\left( M\right) $, moreover when $\left( M,g\right) $
is not conformally diffeomorphic to the standard sphere, $\mathcal{M}_{P}$
is compact in $C^{\infty }\left( M\right) $.

For $u\in \mathcal{M}_{P}$, we can assume $u>0$, then $\left\Vert
u\right\Vert _{L^{\frac{2n}{n-4}}}=1$ and%
\begin{equation*}
Pu=Y_{4}\left( g\right) u^{\frac{n+4}{n-4}}.
\end{equation*}%
It follows that from this equation and Lemma \ref{lem2.2} that%
\begin{equation*}
\Theta _{4}\left( g\right) \geq \frac{E\left( u\right) }{\left\Vert
Pu\right\Vert _{L^{\frac{2n}{n+4}}}^{2}}=\frac{1}{Y_{4}\left( g\right) }\geq
\Theta _{4}\left( g\right) .
\end{equation*}%
Hence $Y_{4}\left( g\right) \Theta _{4}\left( g\right) =1$ and $u\in 
\mathcal{M}_{\Theta }$.

On the other hand, if $u\in \mathcal{M}_{\Theta }$, let $f=Pu$, then%
\begin{equation*}
\Theta _{4}\left( g\right) =\frac{\int_{M}Pu\cdot ud\mu }{\left\Vert
Pu\right\Vert _{L^{\frac{2n}{n+4}}}^{2}}=\frac{\int_{M}G_{P}f\cdot fd\mu }{%
\left\Vert f\right\Vert _{L^{\frac{2n}{n+4}}}^{2}}.
\end{equation*}%
Hence it follows from Theorem \ref{thm1.3} that $f\in C^{\infty }\left(
M\right) $ and either $f>0$ or $-f>0$. Without losing of generality we
assume $f>0$, then $u=G_{P}f\in C^{\infty }\left( M\right) $, $u>0$ and%
\begin{equation*}
Pu=\kappa u^{\frac{n+4}{n-4}}
\end{equation*}%
for some positive constant $\kappa $. Using $\left\Vert u\right\Vert _{L^{%
\frac{2n}{n-4}}}=1$ we see%
\begin{equation*}
\Theta _{4}\left( g\right) =\frac{E\left( u\right) }{\left\Vert
Pu\right\Vert _{L^{\frac{2n}{n+4}}}^{2}}=\frac{1}{\kappa },
\end{equation*}%
and hence $\kappa =Y_{4}\left( g\right) $. It follows that $E\left( u\right)
=Y_{4}\left( g\right) $ and hence $u\in \mathcal{M}_{P}$. Summing up we see $%
\mathcal{M}_{P}=\mathcal{M}_{\Theta }$.
\end{proof}

Now we are ready to prove Theorem \ref{thm1.2}.

\begin{proof}[Proof of Theorem \protect\ref{thm1.2}]
It is clear Theorem \ref{thm1.2} follows from Proposition \ref{prop4.1} and %
\ref{prop4.2}. The compactness of $\mathcal{M}_{P}$ in $C^{\infty }$
topology was shown in the proof of Proposition \ref{prop4.2}.
\end{proof}

At last we will show the approach to the $Q$ curvature equation in Theorem %
\ref{thm1.3} gives another way to find constant scalar curvature metric in a
conformal class with positive Yamabe invariant. Here we always assume $%
\left( M,g\right) $ is a smooth compact $n$ dimensional Riemannian manifold
with $n\geq 3$ and $Y\left( g\right) >0$. To find a conformal metric with
scalar curvature $1$ is the same as solving%
\begin{equation}
L\rho =\rho ^{\frac{n+2}{n-2}},\quad \rho \in C^{\infty }\left( M\right)
,\rho >0.  \label{eq4.2}
\end{equation}%
Here $L$ is the conformal Laplacian operator. For any $u\in C^{\infty
}\left( M\right) $ we write%
\begin{eqnarray}
E_{2}\left( u\right) &=&\int_{M}Lu\cdot ud\mu  \label{eq4.3} \\
&=&\int_{M}\left( \frac{4\left( n-1\right) }{n-2}\left\vert \nabla
u\right\vert ^{2}+Ru^{2}\right) d\mu .  \notag
\end{eqnarray}%
Clearly $E_{2}\left( u\right) $ extends continuously to $u\in H^{1}\left(
M\right) $. To solve (\ref{eq4.2}), people consider the variational problem
(see \cite{LP})%
\begin{equation}
Y\left( g\right) =\inf_{u\in H^{1}\left( M\right) \backslash \left\{
0\right\} }\frac{E_{2}\left( u\right) }{\left\Vert u\right\Vert _{L^{\frac{2n%
}{n-2}}}^{2}}.  \label{eq4.4}
\end{equation}%
Denote%
\begin{equation}
\mathcal{M}_{L}=\left\{ u\in H^{1}\left( M\right) :\left\Vert u\right\Vert
_{L^{\frac{2n}{n-2}}}=1\text{ and }E_{2}\left( u\right) =Y\left( g\right)
\right\} ,  \label{eq4.5}
\end{equation}%
then it is well known that $\mathcal{M}_{L}$ is always nonempty, $\mathcal{M}%
_{L}\subset C^{\infty }\left( M\right) $ and for any $u\in \mathcal{M}_{L}$,
either $u>0$ or $-u>0$. If $u>0$, then after scaling $u$ solves (\ref{eq4.2}%
). Moreover when $\left( M,g\right) $ is not conformally diffeomorphic to
the standard sphere, we have $Y\left( g\right) <Y\left( S^{n}\right) $ and $%
\mathcal{M}_{L}$ is compact in $C^{\infty }$ topology (see \cite{LP,S}).

Now we turn to another approach to solve (\ref{eq4.2}). Since $Y\left(
g\right) >0$, we know the Green's function of $L$ exists and it is always
positive. We can define an operator%
\begin{equation}
\left( G_{L}f\right) \left( p\right) =\int_{M}G_{L}\left( p,q\right) f\left(
q\right) d\mu \left( q\right) .  \label{eq4.6}
\end{equation}%
Let $f=\rho ^{\frac{n+2}{n-2}}$, then (\ref{eq4.2}) becomes%
\begin{equation}
G_{L}f=f^{\frac{n+2}{n-2}},\quad f\in C^{\infty }\left( M\right) ,f>0.
\label{eq4.7}
\end{equation}%
Let%
\begin{equation}
\Theta _{2}\left( g\right) =\sup_{f\in L^{\frac{2n}{n+2}}\left( M\right)
\backslash \left\{ 0\right\} }\frac{\int_{M}G_{L}f\cdot fd\mu }{\left\Vert
f\right\Vert _{L^{\frac{2n}{n+2}}}^{2}}=\sup_{u\in W^{2,\frac{2n}{n+2}%
}\left( M\right) \backslash \left\{ 0\right\} }\frac{\int_{M}Lu\cdot ud\mu }{%
\left\Vert Lu\right\Vert _{L^{\frac{2n}{n+2}}}^{2}}.  \label{eq4.8}
\end{equation}%
Note that this functional is considered in \cite{DoZ}. Same argument as in
the proof of Lemma \ref{lem2.1} shows%
\begin{equation}
\Theta _{2}\left( g\right) =\sup_{\widetilde{g}\in \left[ g\right] }\frac{%
\int_{M}\widetilde{R}d\widetilde{\mu }}{\left\Vert \widetilde{R}\right\Vert
_{L^{\frac{2n}{n+2}}\left( M,d\widetilde{\mu }\right) }^{2}}.  \label{eq4.9}
\end{equation}%
With the solution to Yamabe problem (\cite{LP,S}) we can deduce

\begin{lemma}
\label{lem4.2}Let $\left( M,g\right) $ be a smooth compact $n$ dimensional
Riemannian manifold with $n\geq 3$, $Y\left( g\right) >0$. Denote%
\begin{equation*}
\mathcal{M}_{\Theta _{2}}=\left\{ u\in W^{2,\frac{2n}{n+2}}\left( M\right)
:\left\Vert u\right\Vert _{L^{\frac{2n}{n-2}}\left( M\right) }=1\text{ and }%
\frac{E_{2}\left( u\right) }{\left\Vert Lu\right\Vert _{L^{\frac{2n}{n+2}%
}}^{2}}=\Theta _{2}\left( g\right) \right\} .
\end{equation*}%
Then

\begin{enumerate}
\item $Y\left( g\right) \Theta _{2}\left( g\right) =1$.

\item $\mathcal{M}_{L}=\mathcal{M}_{\Theta _{2}}$.
\end{enumerate}
\end{lemma}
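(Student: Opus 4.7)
The plan is to follow the template of Proposition \ref{prop4.2}, with the solution of the Yamabe problem (cited as \cite{LP,S}) replacing Theorem \ref{thm1.3} as the existence/regularity input. The key structural facts we need are: (i) $Y(g)>0$ implies $G_L>0$ (by the maximum principle, since $L$ is a second-order operator with the sign assumption on the Yamabe constant); (ii) the supremum in (\ref{eq4.8}) can be restricted to strictly positive smooth functions, which is the second-order analog of Lemma \ref{lem2.1} and can be proved by exactly the same three-step chain (restrict to $f\geq 0$ using $G_L>0$, approximate by smooth $f\geq 0$, pass from $f=Lu\geq 0$ with $u$ smooth to $u>0$ via the maximum principle); and (iii) the Yamabe problem produces a positive smooth $u$ with $\|u\|_{L^{2n/(n-2)}}=1$ satisfying $Lu=Y(g)\, u^{(n+2)/(n-2)}$.

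For part (1), the inequality $Y(g)\Theta_2(g)\leq 1$ follows exactly as in the proof of Lemma \ref{lem2.2}: for any positive smooth $v$, Hölder's inequality gives $(\int_M Lv\cdot v\, d\mu)^2\leq \|v\|_{L^{2n/(n-2)}}^2\,\|Lv\|_{L^{2n/(n+2)}}^2$, and the geometric reformulation from the previous paragraph shows $\Theta_2(g)$ is in fact computed from such $v$. For the reverse direction, take a positive Yamabe minimizer $u$. A direct computation, using $\int_M Lu\cdot u\, d\mu = Y(g)\int_M u^{2n/(n-2)}d\mu$ and $\|Lu\|_{L^{2n/(n+2)}}^2 = Y(g)^2\,\|u\|_{L^{2n/(n-2)}}^{2(n+2)/(n-2)}$, yields
\[
\frac{E_2(u)}{\|Lu\|_{L^{2n/(n+2)}}^2}=\frac{1}{Y(g)}\,\|u\|_{L^{2n/(n-2)}}^{-4/(n-2)}=\frac{1}{Y(g)},
\]
so $\Theta_2(g)\geq 1/Y(g)$. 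This simultaneously places the Yamabe minimizer into $\mathcal{M}_{\Theta_2}$, which proves the containment $\mathcal{M}_L\subset \mathcal{M}_{\Theta_2}$.

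For the reverse containment in part (2), let $u\in\mathcal{M}_{\Theta_2}$ and set $f=Lu\in L^{2n/(n+2)}(M)$. Then $f$ is a maximizer for the $G_L$ form of $\Theta_2(g)$. Since $G_L>0$, the standard sign argument used in the proof of Theorem \ref{thm1.3} (replacing $f$ by $|f|$ strictly increases the numerator unless $f$ has constant sign) forces $f$ to be of one sign, so WLOG $f\geq 0$ and not identically zero. A short bootstrap for the second-order equation $u=G_L f$ (strictly easier than Lemma \ref{lem3.1}, since $G_L$ gains two derivatives and standard $L^p$ iteration on $u^{(n+2)/(n-2)}$ terminates in $C^\infty$) then gives $f,u\in C^\infty(M)$ with $u>0$ and $Lu=\kappa\, u^{(n+2)/(n-2)}$ for some $\kappa>0$. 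The normalization $\|u\|_{L^{2n/(n-2)}}=1$ combined with
\[
\Theta_2(g)=\frac{E_2(u)}{\|Lu\|_{L^{2n/(n+2)}}^2}=\frac{1}{\kappa}
\]
and part (1) forces $\kappa=Y(g)$, so $E_2(u)=Y(g)$ and $u\in\mathcal{M}_L$.

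There is essentially no obstacle: the argument is a direct transcription of Proposition \ref{prop4.2} to the second-order setting, where existence of a positive Yamabe minimizer and all required regularity are classical. The only care needed is the verification that Hölder's inequality yields equality precisely on the Yamabe minimizer (which it does, since $Lu$ is then a positive constant multiple of $u^{(n+2)/(n-2)}$), and the observation that the subcritical bootstrap for $u=G_L(\kappa u^{(n+2)/(n-2)})$ is standard.
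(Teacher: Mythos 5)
The paper explicitly omits the proof of Lemma \ref{lem4.2}, stating only that it is "essentially the same as the one for Proposition \ref{prop4.2}," with the solution of the Yamabe problem playing the role of Theorem \ref{thm1.3}. Your proof is exactly that transcription: the H\"older inequality in place of Lemma \ref{lem2.2} gives $Y(g)\Theta_2(g)\le 1$, a positive Yamabe minimizer gives equality and the containment $\mathcal{M}_L\subset\mathcal{M}_{\Theta_2}$, and for the reverse containment you correctly unpack what Theorem \ref{thm1.3} would supply (sign of the maximizer via $G_L>0$, then an easier second-order bootstrap). The algebra $\|Lu\|_{L^{2n/(n+2)}}^2=Y(g)^2\|u\|_{L^{2n/(n-2)}}^{2(n+2)/(n-2)}$ checks out, and your appeal to the maximum principle to identify the infimum over positive functions with $Y(g)$ itself (the second-order analogue of the $Y_4^+$ vs.\ $Y_4$ distinction) is the correct simplification. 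This is the paper's intended argument.
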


Since the proof is essentially the same as the one for Proposition \ref%
{prop4.2}, we omit it here. Roughly speaking Lemma \ref{lem4.2} tells us the
maximization problem for $\Theta _{2}\left( g\right) $ will not produce new
constant scalar curvature metrics other than those by minimizing problem for 
$Y\left( g\right) $. However, \textit{without using the solution to Yamabe
problem}, we can use the same argument as for Theorem \ref{thm1.3} to show $%
\Theta _{2}\left( g\right) \geq \Theta _{2}\left( S^{n}\right) $, with
equality holds if and only if $\left( M,g\right) $ is conformally
diffeomorphic to the standard sphere (here one needs to use the positive
mass theorem); $\mathcal{M}_{\Theta _{2}}$ is always nonempty, $\mathcal{M}%
_{\Theta _{2}}\subset C^{\infty }\left( M\right) $ and any $u\in \mathcal{M}%
_{\Theta _{2}}\mathcal{\ }$must be either positive or negative; $\mathcal{M}%
_{\Theta _{2}}$ is compact in $C^{\infty }\left( M\right) $ when $\left(
M,g\right) $ is not conformally diffeomorphic to the standard sphere. In
particular, this gives another way to solve (\ref{eq4.2}). The details are
left to interested readers.

\end{document}